\newtheorem{theorem}{Theorem}[section]
\newtheorem{lemma}[theorem]{Lemma}
\newtheorem{proposition}[theorem]{Proposition}
\newtheorem{corollary}[theorem]{Corollary}
\newtheorem{claim}[theorem]{Claim}
\theoremstyle{definition}
\newtheorem{definition}[theorem]{Definition}
\newtheorem{remark}[theorem]{Remark}
\newcommand{\cf}{\mathrm{cf}}
\newcommand{\dom}{\mathrm{dom}}
\newcommand{\bb}{\mathbb}
\newcommand{\otp}{\mathrm{otp}}
\newcommand{\tp}{\mathrm{tp}}
\newcommand{\mb}{\mathbf}
\title{Higher-dimensional Delta-systems}
\author{Chris Lambie-Hanson}
\address{Institute of Mathematics of the Czech Academy of Sciences \\ 
\v{Z}itn\'{a} 25, Praha 1, Czechia}
\email{lambiehanson@math.cas.cz}
\urladdr{http://math.cas.cz/lambiehanson}
\keywords{Delta systems, partition relations, chain conditions}
\subjclass[2010]{03E05, 03E02, 03E35}
\begin{document}
\begin{abstract}
  We investigate higher-dimensional $\Delta$-systems indexed by finite sets
  of ordinals, isolating a particular definition thereof and proving a higher-dimensional
  version of the classical $\Delta$-system lemma. We focus in particular on
  systems that consist of sets of ordinals,
  in which case useful order-theoretic uniformities can be
  ensured. We then present three applications of these higher-dimensional
  $\Delta$-systems to problems involving the interplay between forcing and
  partition relations on the reals.
\end{abstract}
\thanks{We thank the anonymous referee for an exceptionally thorough and careful
reading and a great number of corrections and suggestions that have
significantly improved the exposition of the paper.}
\maketitle

\section{Introduction}

The starting point for this paper is one of the basic concepts of combinatorial
set theory: the \emph{$\Delta$-system}.

\begin{definition} \label{classical_def}
  A family $\mathcal{U}$ of sets is a \emph{$\Delta$-system} if there is a
  set $r$, known as the \emph{root} of the $\Delta$-system, such that
  $u \cap v = r$ for all distinct $u,v \in \mathcal{U}$.
\end{definition}

The uniformity provided by $\Delta$-systems can be quite useful, so it is no surprise
that the $\Delta$-system lemma, which isolates conditions that guarantee that a
given family of sets can
be thinned out to form a large $\Delta$-system, is one of the foundational results
of combinatorial set theory. The most commonly stated form of the
lemma, introduced by Shanin \cite{shanin}, is the following.

\begin{lemma}
  Suppose that $\mathcal{U}$ is an uncountable family of finite sets. Then there
  is an uncountable subfamily $\mathcal{U}^* \subseteq \mathcal{U}$ such
  that $\mathcal{U}^*$ is a $\Delta$-system.
\end{lemma}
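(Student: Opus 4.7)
The plan is to first normalize the family so that all sets have the same finite size, and then induct on that common size.

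First I would apply pigeonhole: since $\mathcal{U}$ is uncountable and each $u \in \mathcal{U}$ is finite, some $n < \omega$ must occur as the cardinality of uncountably many members of $\mathcal{U}$. Restricting to the corresponding subfamily, I may assume $|u| = n$ for every $u \in \mathcal{U}$. I then induct on $n$. The base case $n = 0$ is degenerate, and $n = 1$ is immediate since an uncountable collection of distinct singletons is automatically pairwise disjoint, yielding a $\Delta$-system with root $\emptyset$.

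For the inductive step, I split on whether there is a ``popular'' element. Suppose first that some $x$ lies in uncountably many $u \in \mathcal{U}$. Let $\mathcal{U}' = \{u \in \mathcal{U} : x \in u\}$ and form the auxiliary family $\mathcal{V} = \{u \setminus \{x\} : u \in \mathcal{U}'\}$, which is uncountable and consists of sets of size $n - 1$. By the inductive hypothesis, $\mathcal{V}$ contains an uncountable $\Delta$-subfamily $\mathcal{V}^*$ with some root $r$; reinstating $x$ produces an uncountable $\Delta$-subfamily of $\mathcal{U}'$ with root $r \cup \{x\}$.

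Otherwise, every element of $\bigcup \mathcal{U}$ lies in only countably many members of $\mathcal{U}$. In this case I build a pairwise disjoint uncountable subfamily $\{u_\alpha : \alpha < \omega_1\}$ by transfinite recursion. At stage $\alpha$, the set $S_\alpha := \bigcup_{\beta < \alpha} u_\beta$ is a countable union of finite sets, hence countable; by the case hypothesis, only countably many $u \in \mathcal{U}$ meet $S_\alpha$, so the uncountability of $\mathcal{U}$ lets me pick $u_\alpha$ disjoint from $S_\alpha$. The resulting family is a $\Delta$-system with empty root.

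The construction is conceptually routine, and the only point requiring care is the bookkeeping in the second case, which rests on the standard fact that a countable union of countable sets is countable (so that $S_\alpha$ together with all $u \in \mathcal{U}$ meeting it forms a countable ``forbidden'' region at each stage of the recursion). I do not foresee a genuine obstacle; the main decision is simply organizing the argument as an induction on $n$ with the two-case split above.
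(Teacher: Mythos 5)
Your proof is correct. Note that the paper does not actually prove this lemma: it cites Shanin for the statement and directs the reader to Kunen for the proof of the more general formulation (Lemma \ref{general_1d_lemma}), of which this is the instance $\kappa = \aleph_0$, $\lambda = \aleph_1$. Your argument --- normalize to a fixed finite size $n$ by pigeonhole, then induct on $n$ with the dichotomy between a ``popular'' element (recurse on the family with that element deleted) and the diagonal construction of an uncountable pairwise disjoint subfamily --- is the standard elementary proof of the countable case, and every step checks out: the map $u \mapsto u \setminus \{x\}$ is injective on $\mathcal{U}'$, so $\mathcal{V}$ is indeed uncountable, and in the second case the set of members of $\mathcal{U}$ meeting the countable set $S_\alpha$ is a countable union of countable sets, so the recursion never gets stuck. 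The one thing worth being aware of is that this route is specific to finite sets: the induction on $|u|$ has no analogue when the members of $\mathcal{U}$ may be infinite, which is why the proof of Lemma \ref{general_1d_lemma} instead fixes a common order type $\rho < \kappa$ and runs a counting (or pressing-down) argument over the regular cardinal $\lambda$. Your approach buys self-containedness and minimal machinery for the case actually stated; the textbook approach buys the generalization the paper needs elsewhere.
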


The following is a less pithy but more general formulation. For a proof,
we direct the reader to \cite[Ch.\ II, \S 1]{kunen}.

\begin{lemma} \label{general_1d_lemma}
  Suppose that $\kappa < \lambda$ are infinite cardinals such that $\lambda$ is
  regular and, for all $\nu < \lambda$, we have $\nu^{<\kappa} < \lambda$.
  Suppose also that $\mathcal{U}$ is a family of sets such that
  $|\mathcal{U}| \geq \lambda$ and $|u| < \kappa$ for all $u \in \mathcal{U}$.
  Then there is $\mathcal{U}^* \subseteq \mathcal{U}$ such that
  $|\mathcal{U}^*| = \lambda$ and $\mathcal{U}^*$ is a $\Delta$-system.
\end{lemma}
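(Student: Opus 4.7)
The plan is to reduce to sets of ordinals and then isolate a $\Delta$-system by applying Fodor's lemma on a carefully chosen stationary set of indices.

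First, I would make several simplifying reductions. Since any bijection on $\bigcup\mathcal{U}$ preserves intersections, I may assume each $u \in \mathcal{U}$ is a set of ordinals and, shrinking if necessary, that $|\mathcal{U}| = \lambda$. Setting $\mu = |\bigcup\mathcal{U}|$, the assumption $\nu^{<\kappa} < \lambda$ for $\nu < \lambda$ rules out $\mu < \lambda$ (else $|\mathcal{U}| \leq \mu^{<\kappa} < \lambda$), so $\mu = \lambda$ and I may assume $\bigcup\mathcal{U} \subseteq \lambda$. Because each $u \in \mathcal{U}$ has order type less than $\kappa$ and $\kappa < \lambda$ with $\lambda$ regular, a pigeonhole then yields some $\alpha^* < \kappa$ and a subfamily of size $\lambda$ whose members all share order type $\alpha^*$; enumerate this subfamily as $\{u_\xi : \xi < \lambda\}$.

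Now comes the core Fodor argument. The set $C = \{\delta < \lambda : \sup(u_\xi) < \delta \text{ for all } \xi < \delta\}$ is a club in $\lambda$. Fix a regular cardinal $\tau$ with $\alpha^* < \tau < \lambda$ (e.g.\ $\tau = |\alpha^*|^+$ if $\alpha^*$ is infinite, or $\tau = \omega$ otherwise) and set $S = \{\delta \in C : \cf(\delta) = \tau\}$, a stationary subset of $\lambda$. For $\delta \in S$, let $r_\delta = u_\delta \cap \delta$. Since $|r_\delta| \leq |\alpha^*| < \tau = \cf(\delta)$, we have $\sup(r_\delta) < \delta$, so $\delta \mapsto \sup(r_\delta \cup \{0\})$ is regressive on $S$. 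Fodor's lemma returns a stationary $S' \subseteq S$ and $\gamma^* < \lambda$ with $r_\delta \subseteq \gamma^* + 1$ for every $\delta \in S'$. A further pigeonhole, using $|\gamma^* + 1|^{<\kappa} < \lambda$, produces $S'' \subseteq S'$ of size $\lambda$ on which $r_\delta$ is constantly equal to some fixed set $r$.

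I then verify that $\{u_\delta : \delta \in S''\}$ is a $\Delta$-system with root $r$. For $\delta_1 < \delta_2$ in $S''$, the definition of $C$ gives $\sup(u_{\delta_1}) < \delta_2$, so $u_{\delta_1} \subseteq \delta_2$, whence $u_{\delta_1} \cap u_{\delta_2} = u_{\delta_1} \cap (u_{\delta_2} \cap \delta_2) = u_{\delta_1} \cap r = r$, using in the last step that $r = u_{\delta_1} \cap \delta_1 \subseteq u_{\delta_1}$. The only delicate point is the choice of $\tau$, which must be large enough ($>\alpha^*$) to guarantee $u_\delta \cap \delta$ is bounded below $\delta$ yet small enough ($<\lambda$) to keep $\{\delta < \lambda : \cf(\delta) = \tau\}$ stationary; the gap $\kappa < \lambda$ is precisely what makes such a $\tau$ available, and the cardinal-arithmetic hypothesis $\nu^{<\kappa} < \lambda$ powers both counting steps.
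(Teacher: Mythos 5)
Your proof is correct. Note, however, that the paper does not prove Lemma \ref{general_1d_lemma} at all; it simply cites Kunen (Ch.\ II, \S 1), whose argument is the other classical one: after the same reductions (sets of ordinals in $\lambda$, constant order type $\rho$), one locates the least coordinate $\xi_0$ at which the values $u(\xi_0)$ are unbounded in $\lambda$, uses regularity to bound the initial segments $u[\xi_0]$ below some $\alpha_0<\lambda$, applies $|\alpha_0|^{<\kappa}<\lambda$ to freeze $u\cap\alpha_0$ as the root $r$, and then recursively selects $\lambda$ many sets whose ``tails'' lie above the sup of everything chosen so far. Your pressing-down argument replaces that explicit recursion with Fodor's lemma on $\{\delta\in C:\cf(\delta)=\tau\}$, with the club $C$ doing the work of the recursive separation and the regressive map $\delta\mapsto\sup(u_\delta\cap\delta)$ doing the work of isolating $\xi_0$ and $\alpha_0$. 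The two counting steps (constant order type, then constant $r_\delta$ below $\gamma^*+1$) use the hypothesis $\nu^{<\kappa}<\lambda$ exactly as Kunen does. Your choice of $\tau=|\alpha^*|^+$ (or $\omega$) correctly guarantees both that $\tau$ is regular with $\alpha^*<\tau\le\kappa<\lambda$, so the cofinality-$\tau$ ordinals are stationary, and that $u_\delta\cap\delta$ is bounded below $\delta$, so the map is genuinely regressive. The Fodor route is arguably slicker but requires $\lambda$ to be regular uncountable (which it is here, since $\lambda>\kappa\ge\aleph_0$) and imports the machinery of clubs and stationary sets, whereas Kunen's recursion is more elementary and self-contained; both deliver exactly the statement of the lemma.
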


$\Delta$-systems are inherently one-dimensional objects, in practice often
enumerated as sequences indexed by ordinals. When investigating higher-dimensional
combinatorial objects, however, one frequently encounters families of sets
indexed by $n$-element sets of ordinals for some $n > 1$ and desires to
find large subfamilies exhibiting certain uniformity properties analogous
to the uniformities exhibited by $\Delta$-systems. In this context,
higher-dimensional analogues of the $\Delta$-system lemma come into play.
Such analogues were first developed in work of Todor\v{c}evi\'{c}
\cite{todorcevic_reals_and_positive_partition_relations} and Shelah
\cite{shelah_sierpinski_ii}, \cite{shelah_positive_partition_theorems}, and
have appeared with increasing frequency of late in works such as
\cite{highly_connected}, \cite{higher_limits}, \cite{dobrinen_hathaway},
\cite{dzamonja_larson_mitchell}, \cite{kumar_raghavan},
\cite{zhang_halpern_lauchli}, and \cite{zhang_monochromatic}.

The higher-dimensional $\Delta$-systems in the aforementioned works
have taken a number of slightly different forms. In this paper, we isolate
one particular definition, based most directly on the 2-dimensional $\Delta$-systems of
\cite{todorcevic_reals_and_positive_partition_relations} and
\cite{highly_connected} and on the $n$-dimensional $\Delta$-systems of
\cite{higher_limits}. This definition generalizes the familiar
1-dimensional definition and, in the case in which the higher-dimensional
$\Delta$-system consists of sets of ordinals, it can be strengthened to incorporate
some additional order-theoretic uniformities.
The definition is presented in Section \ref{def_section}, where we
also prove some basic properties of our higher-dimensional $\Delta$-systems.
In Section \ref{main_result_section}, we prove our main result, Theorem
\ref{general_theorem}, which is an $n$-dimensional analogue of the classical
$\Delta$-system lemma, isolating conditions
under which an $n$-dimensional $\Delta$-system of a particular size can
be guaranteed to exist inside of an arbitrary collection of sets indexed by
$n$-element sets of ordinals. Theorem \ref{general_theorem} is naturally
seen as an elaboration of the Erd\H{o}s-Rado theorem and is closely
connected to the work on canonical
partition relations of Erd\H{o}s and Rado \cite{erdos_rado} and of
Baumgartner \cite{baumgartner}. After proving Theorem \ref{general_theorem},
we turn to a discussion of its optimality, proving that one of its parameters,
the size of the arbitrary collection of sets inside of which we are guaranteed to find
a large $n$-dimensional $\Delta$-system, cannot be improved and indicating
precisely the extent to which another of its parameters, the upper bound on
the size of the members of our arbitrary collection of sets, can consistently
be improved. The results of this section
are summarized in Corollaries \ref{general_cor_1} and \ref{general_cor_2},
which incorporate Theorem \ref{general_theorem}, our discussion of its optimality,
and its connections with the Erd\H{o}s-Rado theorem.

The remaining sections of the paper present applications of our main result.
Section \ref{chain_condition_section} is a
short section presenting a higher-dimensional analogue of the familiar use
of $\Delta$-systems to prove that Cohen forcing satisfies the Knaster
property. In Section \ref{application_section}, we present an
application to a problem
involving the interplay of forcing and polarized partition relations.
In Section \ref{sumset_section}, we show that, in certain arguments, the
$\Delta$-system lemma presented here can successfully replace a different lemma
(from \cite{shelah_sierpinski_ii}) that, at least under the currently
best known results, requires stronger assumptions. We apply this
to a recent result of Zhang \cite{zhang_monochromatic} regarding additive
partition relations on the reals, obtaining a slight local improvement of his
result.

\subsection*{Notation and conventions}

For a class $X$ and a cardinal $\kappa$, $[X]^\kappa := \{Y \subseteq X \mid
|Y| = \kappa\}$, and $[X]^{<\kappa} := \{Y \subseteq X \mid |Y| < \kappa\}$.
For a set $u$ of ordinals, $\otp(u)$ denotes the order type of $u$.
The class of ordinals is denoted by $\mathrm{On}$. If $\rho$ is an ordinal and
$X$ is a class of ordinals, then $[X]^\rho := \{Y \subseteq X \mid
\otp(Y) = \rho\}$. This is a slight abuse of notation given the previous definition
of $[X]^\kappa$ and the customary identification of a cardinal with its initial
ordinal, but in practice we will use the Greek letter $\rho$ precisely when
the order-type definition of $[X]^\rho$ is intended, so no confusion will arise
from this.

We will often think of sets of ordinals as increasing sequences of ordinals
in the natural way. So, for instance, if $u$ is a set of ordinals, $\rho = \otp(u)$,
and $i < \rho$, then $u(i)$ denotes the unique element $\alpha \in u$ such that
$\otp(u \cap \alpha) = i$. If $\mb{i} \subseteq \rho$, then $u[\mb{i}]$ denotes
$\{u(i) \mid i \in \mb{i}\}$. If $X$ is a set of ordinals and $n < \omega$,
then we will use the notation $(\alpha_0, \ldots, \alpha_{n-1}) \in [X]^n$
to denote the conjunction of the statements $\{\alpha_0, \ldots, \alpha_{n-1}\}
\in [X]^n$ and $\alpha_0 < \ldots < \alpha_{n-1}$. If $A$ and $B$ are nonempty sets of
ordinals, then we write $A < B$ to assert that $\alpha < \beta$ for all
$(\alpha, \beta) \in A \times B$. For improved readability, we will also sometimes
omit commas and brackets when using small sets as subscripts or superscripts. For example,
we may write $u^0_{\alpha\beta}$ instead of $u^{\{0\}}_{\{\alpha, \beta\}}$.
For notational convenience, we will adopt the convention that $\max(\emptyset) = -1$.

If $\kappa$ is an infinite cardinal, then $\beth_n(\kappa)$ is defined
by recursion on $n < \omega$ by setting $\beth_0(\kappa) := \kappa$ and
$\beth_{n+1}(\kappa) := 2^{\beth_n(\kappa)}$ for all $n < \omega$.
As is customary, we will denote $\beth_n(\aleph_0)$ simply by $\beth_n$.
Suppose that $\kappa < \lambda$ are cardinals. We say that $\lambda$ is
$\kappa$-inaccessible if $\nu^\kappa < \lambda$ for all $\nu < \lambda$.
Similarly, $\lambda$ is ${<}\kappa$-inaccessible if $\nu^{<\kappa} < \lambda$
for all $\nu < \lambda$.

If $\mu, \lambda$, and $\nu$ are cardinals and $n$ is a natural number, then
the partition relation $\mu \rightarrow (\lambda)^n_\nu$ is the assertion that,
for all $c:[\mu]^n \rightarrow \nu$, there is $H \in [\mu]^\lambda$ such that
$c \restriction [H]^n$ is constant. The negation of this partition relation is
denoted by $\mu \not\rightarrow (\lambda)^n_\nu$.

If $\bb{P}$ is a forcing notion and $p,q \in \bb{P}$, then $p \parallel q$ asserts
that $p$ and $q$ are compatible, i.e., there is $r \in \bb{P}$ such that $r \leq p$
and $r \leq q$, and $p \perp q$ asserts that $p$ and $q$ are incompatible.

\section{Uniform $n$-dimensional $\Delta$-systems} \label{def_section}

In this section, we present the basic definitions of the paper and prove some of
their basic properties. We begin by working towards our definition of an
$n$-dimensional $\Delta$-system indexed by finite sets of ordinals. Most of
the paper will focus on the case in
which the elements of the $\Delta$-system are themselves sets of ordinals, in which
case we can arrange for significant order-theoretic uniformities, but we first
present a more general definition.

Our $n$-dimensional $\Delta$-systems will be indexed by sets of the form $[H]^n$,
where $H$ is a set of ordinals, and for $n > 1$ they will have not a single root
witnessing the fact that they are $n$-dimensional $\Delta$-systems, but rather
a family of roots. When first attempting to generalize $\Delta$-systems
to higher dimensions, one might optimistically hope to require that, in an $n$-dimensional
$\Delta$-system $\langle u_b \mid b \in [H]^n \rangle$, the intersection
$u_a \cap u_b$ depends only on $a \cap b$ for all $a,b \in [H]^n$. In other words,
one might hope to require the existence of a family of roots $\langle R_a \mid
a \in [H]^{\leq n} \rangle$ such that, for all $b, b' \in [H]^n$, we have
$u_b \cap u_{b'} = R_{b \cap b'}$.
However, this would be an overly restrictive
requirement, even in the case of $n=2$. To see this, let $\mu$ be any infinite cardinal,
and define a family of sets $\langle u_b \mid b \in [\mu]^2 \rangle$ by letting
$u_{\alpha\beta} := \{\alpha, \beta + 1\}$ for all $(\alpha, \beta) \in [\mu]^2$.
Now observe that, if $\alpha < \beta < \gamma < \delta < \mu$, then
\begin{itemize}
  \item $\beta \in u_{\beta \gamma} \cap u_{\beta \delta}$;
  \item $\beta \notin u_{\alpha \beta} \cap u_{\beta \gamma}$.
\end{itemize}
Hence, $u_{\beta \gamma} \cap u_{\beta \delta} \neq u_{\alpha \beta}
\cap u_{\beta \gamma}$, yet $\{\beta, \gamma\} \cap \{\beta, \delta\} =
\{\beta\} = \{\alpha, \beta\} \cap \{\beta, \gamma\}$.
Therefore, if one adopts the requirement that $u_a \cap u_b$ must depend
only on $a \cap b$, then one could not even find a subset $H \subseteq \mu$ of
size 4 for which $\langle u_b \mid b \in [H]^n \rangle$ is a 2-dimensional
$\Delta$-system.

The starting point for what will become our actual definition is Todor\v{c}evi\'{c}'s
2-dimensional \emph{double $\Delta$-system} from
\cite{todorcevic_reals_and_positive_partition_relations}. According to Todor\v{c}evi\'{c}'s
definition, if $H$ is a set of ordinals, then a family of sets
$\langle u_b \mid b \in [H]^2 \rangle$ is a \emph{double $\Delta$-system} if
\begin{itemize}
  \item for all $\alpha \in H$, the family $\langle u_{\alpha \beta} \mid
  \beta \in H \setminus (\alpha + 1) \rangle$ is a $\Delta$-system with root
  $r^0_\alpha$ (for simplicity, assume that $H$ has no maximal element);
  \item for all $\beta \in H \setminus \{\min(H)\}$, the family $\langle u_{\alpha\beta} \mid
  \alpha \in H \cap \beta \rangle$ is a $\Delta$-system with root $r^1_\beta$;
  \item both $\langle r^0_\alpha \mid \alpha \in H \rangle$ and
  $\langle r^1_\beta \mid \beta \in H \setminus \{\min(H)\} \rangle$ are $\Delta$-systems,
  with roots $r^0$ and $r^1$, respectively.
\end{itemize}
Note that, if $\langle u_b \mid b \in [H]^2 \rangle$ is a double $\Delta$-system, as
witnessed by sets $\langle r^0_\alpha \mid \alpha \in H \rangle$, $\langle r^1_\beta \mid
\beta \in H \setminus \{\min(H)\} \rangle$, $r^0$, and $r^1$, then it is in fact the case that
$r^0 = r^1 = \bigcap_{b \in [H]^2} u_b$.

In order to succinctly generalize this definition to higher dimensions,
and to help facilitate the later incorporation of further order-theoretic
uniformities, the following notion will be useful.

\begin{definition}
  Suppose that $a$ and $b$ are sets of ordinals.
  \begin{enumerate}
    \item We say that $a$ and $b$ are \emph{aligned} if $\otp(a) = \otp(b)$ and, for all
    $\gamma \in a \cap b$, we have $\otp(a \cap \gamma) = \otp(b \cap \gamma)$.
    In other words, if $\gamma$ is a common element of $a$ and $b$, then it
    occupies the same relative position in both $a$ and $b$.
    \item We let $\mb{r}(a,b) := \{i < \otp(a) \mid a(i) \in b\}$. Notice that
	$a \cap b = a[\mb{r}(a,b)]$ and, if $a$ and $b$ are aligned, then    
    $a \cap b = a[\mb{r}(a,b)] = b[\mb{r}(a,b)]$.
  \end{enumerate}
\end{definition}

Note that, in our counterexample to our initial overly restrictive attempt at
a definition of a higher-dimensional $\Delta$-system at the beginning of this
section, the problem came about when we considered the \emph{non-aligned sets}
$\{\alpha, \beta\}$ and $\{\beta, \gamma\}$. As we will see shortly, it turns out
that this is the only insurmountable problem with our definition, and if one
only requires the family of roots in an $n$-dimensional $\Delta$-system to control
the intersections of elements of the $\Delta$-system indexed by
\emph{aligned} sets, then one obtains a much more workable definition, which
we adopt as our general definition of an $n$-dimensional $\Delta$-system indexed
by $n$-element sets of ordinals.

\begin{definition} \label{non_uniform_def}
  Suppose that $H$ is a set of ordinals, $1 \leq n < \omega$, and, for each
  $b \in [H]^n$, $u_b$ is a set. We call $\langle u_b \mid b \in [H]^n \rangle$
  an \emph{$n$-dimensional $\Delta$-system} if there is a family of \emph{roots}
  $\langle R^{\mb{m}}_a \mid \mb{m} \subseteq n, ~ a \in [H]^{|\mb{m}|} \rangle$
  such that, for all $b, b' \in [H]^n$, if $b$ and $b'$ are aligned and
  $\mb{r}(b, b') = \mb{m}$, then $u_b \cap u_{b'} = R^{\mb{m}}_{b \cap b'}$.
\end{definition}

We observe that, if $n = 1$, then this is precisely the classical definition of a
$\Delta$-system as given in Definition \ref{classical_def} modulo an enumeration of
the $\Delta$-system via a set of ordinals; the root $r$ in Definition
\ref{classical_def} corresponds to the root $R^\emptyset_\emptyset$ in
Definition \ref{non_uniform_def}. When $n=2$, we obtain Todor\v{c}evi\'{c}'s double
$\Delta$-systems; the roots $r^0_\alpha$, $r^1_\alpha$, and $r^0 (=r^1)$ of
Todor\v{c}evi\'{c}'s definition correspond to the roots $R^0_\alpha$, $R^1_\alpha$,
and $R^\emptyset_\emptyset$, respectively.

We now turn to the special setting in which the elements of our $\Delta$-systems
are sets of ordinals. In this setting, we can ask for our $\Delta$-systems to satisfy
certain additional order-theoretic uniformities, and we will call $n$-dimensional
$\Delta$-systems that satisfy these uniformities \emph{uniform $n$-dimensional
$\Delta$-systems}. Since any family of sets
$\langle u_b \mid b \in [H]^n \rangle$ can be transformed into a family of sets of
ordinals via a bijection between $\bigcup_{b \in [H]^n} u_b$ and an ordinal, and
since the proof of our higher dimensional analogue of the $\Delta$-system
lemma (Theorem \ref{general_theorem}) in fact yields 
uniform $n$-dimensional $\Delta$-systems with no additional 
hypotheses, there
will be no loss of generality for us in focusing on this setting.
(However, for $n > 1$ it will \emph{not} in general be the case that every 
sufficiently large 
$n$-dimensional $\Delta$-system consisting of sufficiently small sets of ordinals 
can be refined to a uniform $n$-dimensional $\Delta$-system of the same size; see Remark 
\ref{refinement_remark}.)

Let us first look at the 1-dimensional case to help us motivate our definition.
In the context of families of sets of ordinals, the classical $\Delta$-system
lemma can easily be strengthened to require that the root of the $\Delta$-system
``sits inside" each of its elements in the same way, in the following sense.

\begin{definition} \label{uniform_1d_def}
  A family $\mathcal{U}$ of sets of ordinals is a \emph{uniform $\Delta$-system}
  if there is a set $r$ such that, for all distinct $u,v \in \mathcal{U}$,
  $u$ and $v$ are aligned and $u \cap v = r$.
\end{definition}

The following proposition indicates that Lemma \ref{general_1d_lemma} can be strengthened to 
yield a uniform $\Delta$-system in the case in which $\mathcal{U}$ is a family of sets of ordinals.

\begin{proposition} \label{uniform_1d_prop}
  Suppose that $\kappa < \lambda$ are infinite cardinals such that $\lambda$ is
  regular and ${<}\kappa$-inaccessible. Suppose also that $\mathcal{U}$ is a
  $\Delta$-system consisting of sets of ordinals, and that $|\mathcal{U}| \geq \lambda$ and
  $|u| < \kappa$ for all $u \in \mathcal{U}$. Then there is $\mathcal{U}^*
  \subseteq \mathcal{U}$ such that $|\mathcal{U}^*| = \lambda$ and
  $\mathcal{U}^*$ is a uniform $\Delta$-system.
\end{proposition}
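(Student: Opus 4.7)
The plan is to iterate two pigeonhole reductions on $\mathcal{U}$, using the regularity of $\lambda$ together with the ${<}\kappa$-inaccessibility hypothesis to ensure that the intermediate subfamilies retain size $\lambda$. Let $r$ denote the common root of the $\Delta$-system $\mathcal{U}$. Note that, since $\mathcal{U}$ has at least two elements, $r \subseteq u$ for every $u \in \mathcal{U}$, so in particular $|r| < \kappa$.

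First I would refine $\mathcal{U}$ so that all of its members have a common order type. Since $|u| < \kappa$ for each $u \in \mathcal{U}$, the function $u \mapsto \otp(u)$ takes at most $\kappa$ values, and $\kappa < \lambda$, so by the regularity of $\lambda$ we can pass to a subfamily $\mathcal{U}_0 \subseteq \mathcal{U}$ of size $\lambda$ on which $\otp(u)$ is constantly some $\rho < \kappa$.

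Next, for each $u \in \mathcal{U}_0$, define a function $\pi_u : r \to \rho$ by $\pi_u(\gamma) := \otp(u \cap \gamma)$; since $r \subseteq u$ and $\otp(u) = \rho$, this is well defined. The total number of such functions is at most $\rho^{|r|} \leq \rho^{<\kappa}$. Since $\rho < \kappa \leq \lambda$ and $\lambda$ is ${<}\kappa$-inaccessible, we have $\rho^{<\kappa} < \lambda$, so another application of the regularity of $\lambda$ yields a subfamily $\mathcal{U}^* \subseteq \mathcal{U}_0$ of size $\lambda$ on which $\pi_u$ is constantly some $\pi : r \to \rho$.

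Finally I would verify that $\mathcal{U}^*$ is a uniform $\Delta$-system. Pick distinct $u, v \in \mathcal{U}^*$. Then $u \cap v = r$ (inherited from the $\Delta$-system structure of $\mathcal{U}$) and $\otp(u) = \otp(v) = \rho$. For any $\gamma \in u \cap v = r$, we have $\otp(u \cap \gamma) = \pi_u(\gamma) = \pi(\gamma) = \pi_v(\gamma) = \otp(v \cap \gamma)$, so $u$ and $v$ are aligned, as required by Definition \ref{uniform_1d_def}. There is no genuine obstacle here; the only subtlety is the bookkeeping that ensures $\rho^{|r|} < \lambda$, which is precisely what the ${<}\kappa$-inaccessibility hypothesis is tailored to deliver.
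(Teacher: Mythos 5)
Your proof is correct and follows essentially the same route as the paper: first stabilize the order type $\rho$, then use ${<}\kappa$-inaccessibility to stabilize the position of the root inside each member, and observe that this forces alignment. The only cosmetic difference is that the paper records the set of positions $\mb{r}(u,r) \subseteq \rho$ (counting via $|\mathcal{P}(\rho)| \le 2^{<\kappa} < \lambda$) where you record the function $\gamma \mapsto \otp(u \cap \gamma)$ on $r$ (counting via $\rho^{|r|} \le \rho^{<\kappa} < \lambda$); these carry the same information.
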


\begin{proof}
  Let $r$ be the root of $\mathcal{U}$.
  Since $\lambda$ is regular and $|u| < \kappa < \lambda$ for all $u \in \mathcal{U}$, by thinning
  out $\mathcal{U}$ if necessary, we may assume that there is an ordinal $\rho < \kappa$
  such that $\otp(u) = \rho$ for all $u \in \mathcal{U}$. Define a function
  $g:\mathcal{U} \rightarrow \mathcal{P}(\rho)$ by letting
  $g(u) := \mb{r}(u, r)$. Since $\lambda$ is
  ${<}\kappa$-inaccessible, we can find a fixed set $\mb{r}^* \subseteq \rho$ and
  a set $\mathcal{U}^* \subseteq \mathcal{U}$ such that $|\mathcal{U}^*| =
  \lambda$ and $g(u) = \mb{r}^*$ for all $u \in \mathcal{U}^*$. Then, for all
  distinct $u,v \in \mathcal{U}^*$, it follows that $u$ and $v$ are aligned,
  with $\mb{r}(u,v) = \mb{r}^*$ and $u \cap v = r$.
\end{proof}

We are now ready for our definition of a uniform $n$-dimensional
$\Delta$-system. In the case $n = 1$, this will coincide with
Definition \ref{uniform_1d_def}, and in the general case it will strengthen Definition
\ref{non_uniform_def} in the same way that Definition \ref{uniform_1d_def}
strengthens Definition \ref{classical_def}.

\begin{definition} \label{general_def}
  Suppose that $H$ is a set of ordinals, $1 \leq n < \omega$, and,
  for all $b \in [H]^n$, $u_b$ is a set of ordinals.
  We call $\langle u_b \mid b \in [H]^n \rangle$ a \emph{uniform
  $n$-dimensional $\Delta$-system} if there is an ordinal $\rho$ and, for
  each $\mb{m} \subseteq n$, a set $\mb{r}_{\mb{m}} \subseteq \rho$
  satisfying the following statements.
  \begin{enumerate}
    \item $\otp(u_b) = \rho$ for all $b \in [H]^n$.
    \item For all $a,b \in [H]^n$ and $\mb{m} \subseteq n$, if $a$ and $b$ are aligned with $\mb{r}(a,b) = \mb{m}$,
    then $u_a$ and $u_b$ are aligned with $\mb{r}(u_a, u_b) = \mb{r}_{\mb{m}}$.
    \item For all $\mb{m}_0, \mb{m}_1 \subseteq n$, we have
    $\mb{r}_{\mb{m}_0 \cap \mb{m}_1} = \mb{r}_{\mb{m}_0} \cap \mb{r}_{\mb{m}_1}$.
  \end{enumerate}
\end{definition}

We now show that Definition \ref{general_def} does indeed strengthen
Definition \ref{non_uniform_def}; clause (1) of the following proposition
will also be useful in a number of other situations.

\begin{proposition} \label{independence_prop}
  Suppose that $1 \leq n < \omega$, $H$ is a set of ordinals, and
  $\langle u_b \mid b \in [H]^n \rangle$ is a uniform $n$-dimensional $\Delta$-system
  as witnessed by an ordinal $\rho$ and sets $\langle \mb{r}_\mb{m} \mid \mb{m}
  \subseteq n \rangle$. Then the following statements hold.
  \begin{enumerate}
    \item For all $\mb{m} \subseteq n$ and all $a,b \in [H]^n$, if
    $a[\mb{m}] = b[\mb{m}]$, then $u_a[\mb{r}_{\mb{m}}] = u_b[\mb{r}_{\mb{m}}]$.
    \item The family $\langle u_b \mid b \in [H]^n \rangle$ is an $n$-dimensional
    $\Delta$-system in the sense of Definition \ref{non_uniform_def}.
  \end{enumerate}
\end{proposition}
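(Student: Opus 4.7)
The plan is to prove clause (1) first, from which clause (2) will follow with minimal additional work. The heart of (1) is showing that $u_a[\mb{r}_{\mb{m}}]$ and $u_b[\mb{r}_{\mb{m}}]$ coincide whenever $a[\mb{m}] = b[\mb{m}]$. In the \emph{aligned} case this is an immediate consequence of clauses (2) and (3) of Definition \ref{general_def}; in the general case I plan to bridge $a$ and $b$ by a finite chain of pairwise-aligned intermediates sitting inside $V := \{c \in [H]^n \mid c[\mb{m}] = a[\mb{m}]\}$.

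Suppose first that $a, b \in [H]^n$ are aligned and $a[\mb{m}] = b[\mb{m}]$. Then $a(i) = b(i)$ for each $i \in \mb{m}$, so $\mb{m} \subseteq \mb{r}(a,b)$. Clause (2) of Definition \ref{general_def} then gives that $u_a$ and $u_b$ are aligned with $\mb{r}(u_a, u_b) = \mb{r}_{\mb{r}(a,b)}$, while clause (3) together with $\mb{m} \subseteq \mb{r}(a,b)$ yields $\mb{r}_{\mb{m}} = \mb{r}_{\mb{m} \cap \mb{r}(a,b)} = \mb{r}_{\mb{m}} \cap \mb{r}_{\mb{r}(a,b)} \subseteq \mb{r}_{\mb{r}(a,b)}$. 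It follows that $u_a(j) = u_b(j)$ for each $j \in \mb{r}_{\mb{m}}$, and in particular $u_a[\mb{r}_{\mb{m}}] = u_b[\mb{r}_{\mb{m}}]$.

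For the general (possibly non-aligned) case, I would construct a finite sequence $a = c_0, c_1, \ldots, c_k = b$ with each $c_l \in V$ and each consecutive pair $(c_l, c_{l+1})$ aligned; iterating the aligned case link by link then yields $u_a[\mb{r}_{\mb{m}}] = u_{c_1}[\mb{r}_{\mb{m}}] = \cdots = u_b[\mb{r}_{\mb{m}}]$. The main obstacle is verifying that such a chain always exists, i.e., that the graph on $V$ with edges given by aligned pairs is connected. This can be seen combinatorially: the non-$S$ entries of an element of $V$ must populate the gaps in $H$ between consecutive members of $S := a[\mb{m}]$, and one can interpolate from $a$ to $b$ gap by gap, performing one single-element substitution at a time, always drawing the replacement from $H$ in the correct slot so as to preserve both membership in $V$ and alignment with the previous intermediate.

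For clause (2), I define a family of roots by setting $R^{\mb{n}}_e := u_c[\mb{r}_{\mb{n}}]$ for any choice of $c \in [H]^n$ with $c[\mb{n}] = e$; by clause (1) this is well-defined, independent of the choice of $c$. For aligned $b, b' \in [H]^n$ with $\mb{r}(b, b') = \mb{n}$, clause (2) of Definition \ref{general_def} gives that $u_b$ and $u_{b'}$ are aligned with $\mb{r}(u_b, u_{b'}) = \mb{r}_{\mb{n}}$, so $u_b \cap u_{b'} = u_b[\mb{r}_{\mb{n}}] = R^{\mb{n}}_{b[\mb{n}]} = R^{\mb{n}}_{b \cap b'}$, which is exactly the requirement of Definition \ref{non_uniform_def}.
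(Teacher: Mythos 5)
Your proposal is correct and follows essentially the same route as the paper: the aligned case is handled exactly as in the paper's base case via clauses (2) and (3) of Definition \ref{general_def}, and the general case is reduced to it by bridging $a$ and $b$ through a chain of pairwise-aligned intermediates all having the same $\mb{m}$-part. The only (cosmetic) difference is in how the chain is built --- the paper runs an induction on the number of misaligned common elements and replaces a contiguous block of entries at each step, whereas you use single-element substitutions; your connectivity claim is true, though to make the sketch airtight one should note that the substitutions within each gap must be performed in a suitable order (e.g.\ adjusting the topmost discrepant entry first) so that every intermediate tuple remains strictly increasing and inside $H$.
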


\begin{proof}
  (1) For all $a,b \in [H]^n$, let $\partial(a,b) := |\{\beta \in a \cap b \mid
  |a \cap \beta| \neq |b \cap \beta|\}|$. Our proof will be by induction on
  $\partial(a, b)$.

  Fix $\mb{m}$, $a$, and $b$ as in the statement of clause (1) of the proposition. If
  $\partial(a,b) = 0$, then $a$ and $b$ are aligned and $\mb{m} \subseteq
  \mb{r}(a,b)$. It follows from clauses (2) and (3) of Definition \ref{general_def}
  that $u_a$ and $u_b$ are aligned and
  $\mb{r}(u_a, u_b) \supseteq \mb{r}_{\mb{m}}$. In particular,
  $u_a[\mb{r}_{\mb{m}}] = u_b[\mb{r}_{\mb{m}}]$, as desired.

  Now suppose that $\partial(a,b) > 0$ and we have established all instances of
  clause (1) of the proposition for $a', b' \in [H]^n$ for which $a'[\mb{m}] = b'[\mb{m}]$
  and $\partial(a', b') < \partial(a,b)$. Let $\alpha \in a \cap b$ be least
  such that $|a \cap \alpha| \neq |b \cap \alpha|$. Let $k_a, k_b < n$ be such that
  $a(k_a) = \alpha = b(k_b)$. Without loss of generality, we may assume that
  $k_a < k_b$.

  We now alter $a$ to form a new set $a' \in [H]^n$. If $a \cap b \cap \alpha
  \neq \emptyset$, then let $\alpha^* := \max(a \cap b \cap \alpha)$. In this case,
  by our choice of $\alpha$, there must be $k^* < k_a$
  such that $a(k^*) = b(k^*) = \alpha^*$.
  If $a \cap b \cap \alpha = \emptyset$, then let $k^* := -1$. In either case, note
  that, for all $\ell \in (k^*, k_a]$, we have $b(\ell) \notin a$ and,
  if $k^* \geq 0$, then $a(k^*) < b(\ell)$. Moreover, $\mb{m} \cap (k^*, k_a] =
  \emptyset$. We define $a'$ by specifying $a'(\ell)$ for all $\ell < n$.
  If $\ell \leq k^*$ or $\ell > k_a$, then let $a'(\ell) := a(\ell)$. If
  $\ell \in (k^*, k_a]$, then let $a'(\ell) := b(\ell)$. The following observations
  are immediate.
  \begin{enumerate}[label=(\roman*)]
    \item $a$ and $a'$ are aligned, with $\mb{r}(a, a') = n \setminus (k^*, k_a]$.
    In particular, $\mb{m} \subseteq \mb{r}(a, a')$.
    \item $\partial(a', b) = \partial(a, b) - 1$, since
    \[
    \{\beta \in a' \cap b
    \mid |a' \cap \beta| \neq |b \cap \beta|\} = \{\beta \in a \cap b
    \mid |a \cap \beta| \neq |b \cap \beta|\} \setminus \{\alpha\}.
    \]
  \end{enumerate}
  We can therefore invoke the inductive hypothesis together with (i) to conclude that
  $u_a[\mb{r}_\mb{m}] = u_{a'}[\mb{r}_\mb{m}]$ and together with (ii) to conclude
  that $u_{a'}[\mb{r}_\mb{m}] = u_b[\mb{r}_\mb{m}]$, so it follows that
  $u_a[\mb{r}_\mb{m}] = u_b[\mb{r}_\mb{m}]$, as desired.

  (2) To prove that $\langle u_b \mid b \in [H]^n \rangle$ satisfies Definition
  \ref{non_uniform_def}, we must specify roots
  \[
    \langle R^{\mb{m}}_a \mid \mb{m} \subseteq n, ~ a \in [H]^{|\mb{m}|} \rangle.
  \]
  To this end, fix $\mb{m} \subseteq n$ and $a \in [H]^{|\mb{m}|}$. 
  If there are no $b \in [H]^n$ for which $b[\mb{m}] = a$,
  then simply let $R^{\mb{m}}_a := \emptyset$. Otherwise, choose $b \in [H]^n$
  for which $b[\mb{m}] = a$ and set $R^{\mb{m}}_a := u_b[\mb{r}_\mb{m}]$. By
  clause (1) of this proposition, the value of $R^\mb{m}_a$ is independent of
  our choice of $b$.

  Now suppose that $b, b' \in [H]^n$ are aligned and $\mb{r}(b,b') = \mb{m}$, so,
  in particular, $b[\mb{m}] = b \cap b'$. Then
  $u_b$ and $u_{b'}$ are aligned and $\mb{r}(u_b, u_{b'}) = \mb{r}_\mb{m}$.
  Moreover, we defined $R^{\mb{m}}_{b \cap b'}$ so that
  $R^{\mb{m}}_{b \cap b'} = u_b[\mb{r}_\mb{m}]$. It follows that
  $u_b \cap u_{b'} = R^{\mb{m}}_{b \cap b'}$, so $\langle R^{\mb{m}}_a \mid
  \mb{m} \subseteq n, ~ a \in [H]^{|\mb{m}|} \rangle$ witnesses the fact that
  $\langle u_b \mid b \in [H]^n \rangle$ satisfies Definition \ref{non_uniform_def}.
\end{proof}

\begin{remark} \label{refinement_remark}
  There is no direct analogue of Proposition \ref{uniform_1d_prop} for 
  $n$-dimensional $\Delta$-systems when $n > 1$ unless $\lambda$ is weakly 
  compact (see Corollary \ref{wkly_compact_cor} for a positive result in case 
  $\lambda$ is weakly compact). For 
  a simple counterexample, suppose that $\lambda$ is a regular uncountable cardinal 
  that is 
  not weakly compact, let $\pi:\lambda \times \lambda \rightarrow \lambda$ be 
  a bijection, and let $c:[\lambda]^2 \rightarrow 2$ be a function such that 
  $c``[H]^2 = 2$ for all $H \in [\lambda]^\lambda$. Now, for all $\alpha < \beta < 
  \lambda$, let 
  \[
    u_{\alpha\beta} := \begin{cases}
      \emptyset & \text{if } c(\alpha, \beta) = 0 \\
      \pi(\alpha, \beta) & \text{if } c(\alpha, \beta) = 1.
    \end{cases}
  \]
  Then $\langle u_{\alpha\beta} \mid \alpha < \beta < \lambda \rangle$ is 
  a 2-dimensional $\Delta$-system (with $R^0_\alpha = R^1_\alpha = \emptyset$ 
  for all $\alpha < \lambda$) consisting of finite sets of ordinals, yet 
  whenever $H \in [\lambda]^\lambda$, the family $\langle u_{\alpha\beta} \mid 
  (\alpha, \beta) \in [H]^2 \rangle$ contains sets of cardinality 0 and 
  of cardinality 1 and therefore cannot be a uniform 2-dimensional 
  $\Delta$-system.
  
  Nonetheless, as we shall see in Section \ref{main_result_section}, the 
  cardinality hypotheses on the cardinal $\mu$ and the sizes of the 
  sets $u_b$ that guarantee that a family $\langle u_b \mid 
  b \in [\mu]^n \rangle$ of sets of ordinals 
  can be refined to an $n$-dimensional $\Delta$-system 
  of a specified cardinality are already sufficient to guarantee that the 
  family can be refined to a \emph{uniform} $n$-dimensional $\Delta$-system 
  of the same cardinality. 
\end{remark}

\section{A higher-dimensional $\Delta$-system lemma} \label{main_result_section}

In this section, we prove the main result of the paper (Theorem \ref{general_theorem}),
a higher-dimensional analogue of the
$\Delta$-system lemma which asserts, roughly speaking, that inside every
family of sets of ordinals indexed by $n$-element subsets of some sufficiently large
cardinal $\mu$, we can find a subset $H$ of $\mu$ of some specified size such
that $[H]^n$ indexes a uniform $n$-dimensional $\Delta$-system. In the absence
of weakly compact cardinals, this $H$ will necessarily be smaller than $\mu$.
In the same way that the $\Delta$-system lemma can fruitfully be seen as as
an extension of the pigeonhole principle, this $n$-dimensional
$\Delta$-system lemma can fruitfully be seen as an elaboration of the
Erd\H{o}s-Rado theorem, and in fact a version of the Erd\H{o}s-Rado theorem
will be folded into our statement to carry along as an inductive hypothesis.

The result is also closely related to results on \emph{canonical partition relations},
introduced by Erd\H{o}s and Rado in \cite{erdos_rado}, and in particular to
work done by Baumgartner on canonical partition relations \cite{baumgartner},
which can also be seen as an elaboration of the Erd\H{o}s-Rado theorem.
Indeed, in the cases in which $\kappa$ is a successor cardinal, much of our main result
can be derived from the main result of \cite{baumgartner}.
When $\kappa$ is a limit cardinal (and in particular in the important case
$\kappa = \aleph_0$, $\lambda = \aleph_1$), this approach does not
seem to work, so we provide a single proof that covers all cases.
We first introduce the following notation, from \cite{baumgartner}, that allows
us to indicate precisely the size of the family needed to ensure the existence of a large
uniform $n$-dimensional $\Delta$-system.

\begin{definition} \label{sigma_def}
  Given an infinite regular cardinal $\lambda$, recursively define $\sigma(\lambda, n)$ for
  $1 \leq n < \omega$ by letting $\sigma(\lambda, 1) := \lambda$ and, given
  $1 \leq n < \omega$, letting $\sigma(\lambda, n + 1) := \left(2^{<\sigma(\lambda, n)}
  \right)^+$.
\end{definition}

\begin{remark} \label{sigma_remark}
  To connect Definition \ref{sigma_def} with the already familiar $\beth$-notation
  and to help clarify the choice of cardinals in the statements of Corollary
  \ref{aleph_1_cor}, Theorem \ref{polarized_forcing_theorem}, and Corollary
  \ref{additive_cor}, we make the following observations, which we leave the reader
  to verify.
  \begin{enumerate}
    \item If $\lambda = \kappa^+$ and $1 \leq n < \omega$, then $\sigma(\lambda,
    n) = (\beth_{n-1}(\kappa))^+$. In particular, $\sigma(\aleph_1, n) =
    \beth_{n-1}^+$ and $\sigma(\beth_1^+, n) = \beth_n^+$.
    \item For every infinite regular $\lambda$, if $2 \leq n < \omega$, then
    $\sigma(\lambda, n) = (\beth_{n-2}(2^{<\lambda}))^+$.
  \end{enumerate}
  Note in particular that $\sigma(\lambda, n)$ is regular for each regular infinite
  $\lambda$ and each $1 \leq n < \omega$.

  We also remark that $\sigma(\lambda, n)$ is precisely the cardinal resource
  needed to ensure a monochromatic set of size $\lambda$ in the $n$-dimensional
  Erd\H{o}s-Rado theorem, which can be formulated as follows: for every
  $1 \leq n < \omega$ and all infinite cardinals $\nu < \lambda$, with
  $\lambda$ regular, the partition relation $\sigma(\lambda, n) \rightarrow
  (\lambda + (n-1))^n_\nu$ holds (\cite[Theorem 39]{erdos_rado_partition_calculus};
  cf.\ also \cite[Proposition 1]{baumgartner}). See Corollary \ref{general_cor_1}
  for a more precise formulation of the connection between our main result and the
  Erd\H{o}s-Rado theorem.
\end{remark}

In the proof of Theorem \ref{general_theorem}, we will make use of the following notion of the
\emph{type} of a sequence of sets of ordinals, which describes the order-relations
existing among the sets.

\begin{definition} \label{type_def}
  Suppose that $I$ is a set and, for all $i \in I$, $u_i$ is a set of ordinals.
  Then $\tp(\langle u_i \mid i \in I \rangle)$ (the \emph{type of
  $\langle u_i \mid i \in I \rangle$}) is a function from
  $\otp(\bigcup_{i \in I}u_i)$ to $\mathcal{P}(I)$ defined as follows.
  First, let $\bigcup_{i \in I}u_i$ be enumerated in increasing order as
  $\langle \alpha_\eta \mid \eta < \otp(\bigcup_{i \in I}u_i) \rangle$.
  Then, for all $\eta < \otp(\bigcup_{i \in I}u_i)$, let
  $\tp(\langle u_i \mid i \in I \rangle)(\eta) := \{i \in I \mid
  \alpha_\eta \in u_i\}$.
\end{definition}

We will often slightly abuse notation and write, for instance,
$\tp(u_0, u_1, u_2)$ instead of $\tp(\langle u_0, u_1, u_2 \rangle)$.

\begin{remark} \label{type_remark}
  To connect Definition \ref{type_def} with the earlier definition of
  \emph{aligned sets}, we note that, if $a$ and $b$ are sets of ordinals, then
  $a$ and $b$ are aligned if and only if $\tp(a \cap b, a) = \tp(a \cap b, b)$.
  We also observe the following useful facts about the $\tp$ operator, which
  can easily be verified:
  \begin{enumerate}
    \item Suppose that $I$ is a set and, for all $i \in I$, $u_i$ and $u'_i$ are
    sets of ordinals. Suppose also that $\tp(\langle u_i \mid i \in I \rangle) = \tp(\langle
    u_i' \mid i \in I \rangle)$. Then the following statements hold.
    \begin{enumerate}
      \item For all $i \in I$, we have $\otp(u_i) = \otp(u'_i)$.
      \item For all $J \subseteq I$, we have $\tp(\langle u_i \mid i \in J \rangle)
      = \tp(\langle u'_i \mid i \in J \rangle)$.
    \end{enumerate}
    \item Suppose that $u_0$, $u_1$, $u_0'$, and $u_1'$ are sets of ordinals.
    If $u_0$ and $u_1$ are aligned and $\tp(u_0, u_1) = \tp(u_0', u_1')$,
    then $u_0'$ and $u_1'$ are also aligned and $\mb{r}(u_0', u_1') =
    \mb{r}(u_0, u_1)$.
  \end{enumerate}
\end{remark}

The higher-dimensional $\Delta$-systems that we isolate in our main result will have
an additional technical uniformity (the ``moreover" clause of
Theorem \ref{general_theorem}) that allows us to control the relationship between
$u_a$ and $u_b$ for certain non-aligned pairs $a,b \in [H]^n$ and is useful
in some applications. In order to properly
state it, we need some further definitions. Readers can safely skip these technical
considerations and the ``moreover" clause of the theorem on first read, if desired, as
they are not needed in our applications in Sections
\ref{chain_condition_section} and \ref{application_section}. They are
used in the proof of Corollary \ref{additive_cor}, which is presented not in this
paper but in \cite{sumset_proof_alteration}.

\begin{definition}
  Suppose that $i < \rho$ are ordinals and $a, b \in [\mathrm{On}]^\rho$. We say
  that $a$ and $b$ are \emph{aligned above $i$} if $a[\rho \setminus i]$ and
  $b[\rho \setminus i]$ are aligned.
\end{definition}

The following notion provides strictly less information than $\tp(a,b)$ but
is sometimes easier to control.

\begin{definition}
  Suppose that $a$ and $b$ are sets of ordinals. Then the \emph{intersection type
  of $a$ and $b$}, denoted $\tp_{\mathrm{int}}(a,b)$, is the set
  $\{(i,j) \in \otp(a) \times \otp(b) \mid a(i) = b(j)\}$.
\end{definition}

\begin{definition}
  Suppose that $a$ is a nonempty set of ordinals and $i < \otp(a)$.
  \begin{enumerate}
    \item We say that an ordinal $\alpha$ is \emph{$i$-possible} for $a$ if
    the following two statements hold:
    \begin{enumerate}
      \item if $i > 0$, then $\alpha > a(i-1)$;
      \item if $i + 1 < \otp(a)$, then $\alpha < a(i+1)$.
    \end{enumerate}
    Intuitively, $\alpha$ is $i$-possible for $a$ if $a(i)$ can be replaced
    by $\alpha$ without changing the relative positions of the other elements
    of $a$.
    \item If $\alpha$ is $i$-possible for $a$, then $a_{i \mapsto \alpha}$ is the
    set $\left(a \setminus \{a(i)\}\right) \cup \{\alpha\}$, i.e., the set obtained by replacing
    the $i^{\mathrm{th}}$ element of $a$ with $\alpha$.
  \end{enumerate}
\end{definition}

We are now ready for our main result. As we will see
at the end of this section, unless $\lambda$ is a weakly compact cardinal,
the theorem is optimal in the sense that $\mu$ cannot be lowered. We also note
that clause (1) of the following theorem is essentially the Erd\H{o}s-Rado theorem.
In response to a query from the
referee, we note that our proof does not yield an essentially new proof of the
Erd\H{o}s-Rado theorem; if one extracts the proof of just clause (1) from our
proof, one obtains more or less a proof of the Erd\H{o}s-Rado theorem originally
given by Simpson in \cite{simpson} (see also the proof of
\cite[Theorem 7.2.1]{chang_keisler}).

\begin{theorem} \label{general_theorem}
  Suppose that
  \begin{itemize}
    \item $1 \leq n < \omega$;
    \item $\kappa, \nu < \lambda$ are infinite cardinals, $\lambda$ is regular and
    ${<}\kappa$-inaccessible, and $\mu = \sigma(\lambda, n)$;
    \item $g: [\mu]^n \rightarrow \nu$;
    \item for all $b \in [\mu]^n$, we are given a set $u_b \in [\mathrm{On}]^{< \kappa}$.
  \end{itemize}
  Then there are $H \in [\mu]^\lambda$ and $k < \nu$ such that
  \begin{enumerate}
    \item $g(b) = k$ for all $b \in [H]^n$;
    \item $\langle u_b \mid b \in [H]^n \rangle$ is a uniform $n$-dimensional
    $\Delta$-system.
  \end{enumerate}
  Moreover, we can arrange our choice of $H$ so that,
  for all $a,b \in [H]^n$ and all $m < n$, if it is the
  case that $a$ and $b$ are aligned above $m$ and $a(m) = b(m)$, then, for any
  ordinal $\alpha \in H$ that is $m$-possible for both $a$ and $b$, we have $\tp_{\mathrm{int}}(u_a,u_b)
  = \tp_{\mathrm{int}}(u_{a_{m \mapsto \alpha}}, u_{b_{m \mapsto \alpha}})$.
\end{theorem}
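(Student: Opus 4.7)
My plan is to proceed by induction on $n$. For the base case $n=1$, a pigeonhole argument on $g$, using $\nu < \lambda$ with $\lambda$ regular, produces a $g$-homogeneous set of size $\lambda$, to which I would apply Lemma \ref{general_1d_lemma} and then Proposition \ref{uniform_1d_prop} to extract a uniform 1-dimensional $\Delta$-system. The moreover clause is vacuous in this case, since alignment above $m < 1$ combined with $a(m) = b(m)$ forces $a = b$.

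For the inductive step from $n-1$ to $n$, I set $\mu' := \sigma(\lambda, n-1)$ so that $\mu = (2^{<\mu'})^+$. I would adapt the elementary-chain proof of the Erd\H{o}s--Rado theorem due to Simpson, as mentioned in Remark \ref{sigma_remark}, carrying along the additional $\Delta$-system data. Specifically, fix a sufficiently large regular $\chi$ and build a continuous $\subseteq$-increasing elementary chain $\langle M_\xi \mid \xi \leq \mu' \rangle$ of submodels of $H(\chi)$, each of cardinality $2^{<\mu'}$, containing all problem parameters $(\kappa, \lambda, \nu, n, g, \langle u_b \mid b \in [\mu]^n \rangle)$ as elements, with closure $[M_\xi]^{<\mu'} \subseteq M_{\xi+1}$ for each $\xi < \mu'$. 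Since $|M_{\mu'}| = 2^{<\mu'} < \mu$, pick $\beta^* \in \mu \setminus M_{\mu'}$, and for each $\xi < \mu'$ let $\tau_\xi$ be the complete type of $\beta^*$ over $M_\xi$ in the expanded structure whose functions and predicates encode $g$, the $u_b$'s, the $\tp$ and $\tp_{\mathrm{int}}$ operators, alignment, and the replacement operation $a \mapsto a_{m \mapsto \alpha}$. Because $\tau_\xi$ is coded by a subset of $M_\xi$ of size at most $2^{<\mu'}$, the closure ensures $\tau_\xi \in M_{\xi+1}$, and elementarity then produces $\beta_\xi \in M_{\xi+1} \setminus M_\xi$ realizing $\tau_\xi$.

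The resulting strictly increasing sequence $\langle \beta_\xi \mid \xi < \mu' \rangle$ has the key property that, for any $(\xi_0, \ldots, \xi_{n-1}) \in [\mu']^n$, the value $g(\{\beta_{\xi_0}, \ldots, \beta_{\xi_{n-1}}\})$, the order type of $u_{\{\beta_{\xi_0}, \ldots, \beta_{\xi_{n-1}}\}}$, and all relevant $\tp$ and $\tp_{\mathrm{int}}$ data comparing $u$-values at such tuples are determined purely by the index pattern via reduction through $\tau_{\xi_{n-1}}, \tau_{\xi_{n-2}}, \ldots, \tau_{\xi_0}$, with parameters in $M_0$. This reduces the original problem to an $(n-1)$-dimensional one on $[\mu']^{n-1}$ with the same $\kappa, \lambda, \nu$; applying the inductive hypothesis yields $H' \in [\mu']^\lambda$ (and an outer pigeonhole fixes the constant color $k$), and then $H := \{\beta_\xi \mid \xi \in H'\} \in [\mu]^\lambda$ is the desired set. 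Verification of clauses (1)--(3) of Definition \ref{general_def} and the moreover clause reduces to invariance of types along $H$, which is built into the choice of expanded language.

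The hard part will be setting up the expanded language so that the types $\tau_\xi$ are rich enough to encode all the uniformities required by Definition \ref{general_def} and the moreover clause simultaneously, yet are still coded by small enough subsets of $M_\xi$ to belong to $M_{\xi+1}$ --- this is precisely where the closure $[M_\xi]^{<\mu'} \subseteq M_{\xi+1}$ must be calibrated carefully. A subtle point is clause (3) of Definition \ref{general_def}, the coherence condition $\mb{r}_{\mb{m}_0 \cap \mb{m}_1} = \mb{r}_{\mb{m}_0} \cap \mb{r}_{\mb{m}_1}$; this should emerge automatically because all the root sets $\mb{r}_{\mb{m}}$ are extracted from a single ambient type rather than assembled by independent choices, so any set-theoretic identity that holds in the generic type is inherited by the $\mb{r}_{\mb{m}}$'s.
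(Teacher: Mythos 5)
Your skeleton is the paper's: induction on $n$, with the inductive step carried out by a Simpson-style elementary-submodel argument that reduces the analysis of $n$-tuples to $(n-1)$-tuples capped by a ``generic'' point, followed by an application of the inductive hypothesis. (The paper uses a single ${<}\mu^*$-closed model $M$ with $\mu_M := M \cap \mu \in \mu$ playing the role of your $\beta^*$, rather than a chain of models with types, but that difference is largely cosmetic.) Two problems, one small and one serious. The small one: a complete type over $M_\xi$ is a subset of a set of formulas of size $2^{<\mu'}$, and $2^{<\mu'} \geq \mu'$, so the closure $[M_\xi]^{<\mu'} \subseteq M_{\xi+1}$ does \emph{not} put $\tau_\xi$ into $M_{\xi+1}$; you have to insert it by hand (harmless, but the justification as written is wrong).

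The serious gap is the final sentence, where you assert that clauses (1)--(3) of Definition \ref{general_def} and the ``moreover'' clause ``reduce to invariance of types along $H$.'' One round of end-homogenization controls single $n$-tuples (hence clause (1) and the common order type $\rho$), but the $\Delta$-system condition is a statement about \emph{pairs} of $n$-tuples, i.e., configurations of up to $2n$ indices; canonizing those by type reduction alone is exactly the argument that needs $\mu \rightarrow (\lambda)^{2n}_{2^{<\kappa}}$, i.e., roughly $\sigma(\lambda,2n)$ rather than $\sigma(\lambda,n)$ --- the very hypothesis the theorem is designed to avoid. Moreover, type equality only yields that intersection \emph{patterns} agree, whereas a $\Delta$-system requires the intersections to be literally equal to fixed root sets and the remaining coordinates to be genuinely new. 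The paper gets this from three concrete devices your proposal omits: (a) each $u_{a^\frown\langle\mu_M\rangle}$ is split into the literal set $w_a = u_{a^\frown\langle\mu_M\rangle}\cap M$, which every reflected $u_{a^\frown\langle\beta\rangle}$ reproduces \emph{exactly} on the positions $\mb{i}_a$, not merely in type; (b) the reflected ``new'' coordinates are forced into pairwise disjoint intervals $(\epsilon_{a,j},\gamma_{a,j})$, so sets with the same lower part and different top elements meet only in $w_a$ (Claim \ref{1d_claim}); and (c) a final recursive thinning of $H_0$ to $H$ that avoids the ${<}\kappa$-sized bad sets $C_{a_0,a_1}$ at each step, which is what pushes $u_{b_0}\cap u_{b_1}$ into the designated root when $b_0$ and $b_1$ have different top elements \emph{and} different lower parts (Claims \ref{claim_215} and \ref{claim_216}). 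Without (c), nothing in your construction prevents a ``new'' element of $u_{a_1{}^\frown\langle\beta_{\xi'}\rangle}$ --- which is a perfectly good parameter of $M_\xi$ for $\xi>\xi'$ --- from reappearing inside $u_{a_0{}^\frown\langle\beta_\xi\rangle}$. A further uniformity problem with the chain: the approximations $u_{a^\frown\langle\beta^*\rangle}\cap M_\xi$ grow with $\xi$, so the root positions read off from $\tau_\xi$ need not be the same across the chain; the paper sidesteps this by computing everything relative to a single model.
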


\begin{proof}
  The proof is by induction on $n$. When $n = 1$, the result follows from
  Proposition \ref{uniform_1d_prop} and the pigeonhole principle (note that
  the ``moreover" clause of the theorem is trivial if $n=1$). So suppose that
  $1 < n < \omega$ and we have established all instances of the theorem for $n - 1$.

  Set $\mu^* := \sigma(\lambda, n-1)$. We will construct the desired set $H$ via a sequence of 
  refinements, which we outline here at the start. We will first isolate an ordinal $\mu_M < \mu$, of 
  size $2^{<\mu^*}$ and cofinality at least $\mu^*$. Next, we will build a set $A \subseteq \mu_M$ 
  of order type $\mu^*$ exhibiting certain uniformities with respect to the family 
  $\langle u_b \mid b \in [\mu]^n \rangle$ and the function $g$.
  An application of the inductive 
  hypothesis for $n-1$ will then yield a set $H_0 \subseteq A$ of cardinality $\lambda$. 
  Finally, we will thin out $H_0$ one last time by recursively constructing an increasing sequence 
  $\langle \beta_\xi \mid \xi < \lambda \rangle$ from $H_0$ and letting $H := \{\beta_\xi \mid \xi 
  < \lambda\}$. Together, this sequence of refinements is as follows:
  \[
  \mu \supseteq \mu_M \supseteq A \supseteq H_0 \supseteq 
  \{\beta_\xi \mid \xi < \lambda\} = H.
  \]
  
  To begin, let $\theta$ be a sufficiently large regular cardinal, and let $M$ be an
  elementary substructure of $(H(\theta), \in,
  g, \langle u_b \mid b \in [\mu]^n \rangle)$ such that
  $M$ is closed under sequences of length less than $\mu^*$
  and $\mu_M := M \cap \mu \in \mu$.
  This is possible, since $\mu^*$ is regular and $\mu =
  \sigma(\lambda, n) = \left(2^{<\mu^*} \right)^+$. Note that
  $\cf(\mu_M) \geq \mu^*$.

  Temporarily fix an arbitrary $a \in [\mu_M]^{n-1}$, and consider $u_{a^\frown \langle \mu_M \rangle}$.
  Let $w_a := u_{a ^\frown \langle \mu_M \rangle} \cap M$ and
  $\rho_a = \otp\left(u_{a^\frown \langle \mu_M \rangle}\right)$. Let $\mb{i}_a := \mb{r}(u_{a^\frown 
  \langle \mu_M \rangle}, w_a)$, and let $\mb{j}_a := \rho_a \setminus \mb{i}_a$. Note 
  that $u_{a^\frown \langle \mu_M \rangle}[\mb{i}_a] = w_a$. For each
  $j \in \mb{j}_a$, let $\gamma_{a,j}$ be the least ordinal $\gamma$ in $M$
  such that $u_{a^\frown \langle \mu_M \rangle}(j) < \gamma$; to see that such
  an ordinal $\gamma$ exists, note that $\sup(\bigcup_{b \in [\mu]^n} u_b)$ is
  definable in $M$ and is therefore an element of $M$.
  
  \begin{claim} \label{a_claim}
    There is a set $A \subseteq \mu_M$ of order type $\mu^*$ such that:
    \begin{enumerate}
      \item For every $a \in [A]^{n-1}$ and every $\beta \in A$ with $\max(a) < \beta$:
      \begin{enumerate}
        \item $g(a^\frown \langle \beta \rangle) = g(a^\frown \langle \mu_M \rangle)$;
        \item $\otp(u_{a^\frown \langle \beta \rangle}) = \rho_a$;
        \item $u_{a^\frown \langle \beta \rangle}[\mb{i}_a] = w_a$.
      \end{enumerate}
      \item For every $a \in [A]^{n-1}$, all $\alpha, \beta \in A$ with $\max(a) < \alpha < \beta$, 
      and all $j \in \mb{j}_a$, we have $u_{a^\frown \langle \beta \rangle}(j) \notin 
      u_{a^\frown \langle \alpha \rangle}$.
      \item For every $\beta \in A$, we have \[
      \tp(\langle u_{a^\frown \langle \beta \rangle} \mid 
      a \in [A \cap \beta]^{n-1} \rangle) = \tp(\langle u_{a^\frown \langle \mu_M} \rangle \mid 
      a \in [A \cap \beta]^{n-1} \rangle).
      \]
      In particular, if $a_0, a_1 \in [A \cap \beta]^{n-1}$, then 
      \[
      \tp(u_{a_0{}^\frown \langle \beta 
      \rangle}, u_{a_1{}^\frown \langle \beta \rangle}) = \tp(u_{a_0{}^\frown \langle \mu_M \rangle}, 
      u_{a_1{}^\frown \langle \mu_M \rangle}).
      \]
    \end{enumerate}
  \end{claim}
  
  \begin{proof}
  We will recursively construct an increasing sequence $\langle \alpha_\eta \mid \eta <
  \mu^* \rangle$ of ordinals below $\mu_M$ and then let $A := \{\alpha_\eta \mid \eta < \mu^* \}$.
  Our construction will maintain the hypothesis that, for all $\eta < \mu^*$, $A_\eta := 
  \{\alpha_\xi \mid \xi < \eta\}$ satisfies all of the items in the statement of the claim.
  
  Begin by letting $\alpha_\eta = \eta$ for all $\eta < n - 1$. Now suppose that
  $n - 1 \leq \eta < \mu^*$ and we have defined $\langle
  \alpha_\xi \mid \xi < \eta \rangle$.
  By the closure of $M$ and the fact that $[A_\eta]^{n-1}$ has size less than
  $\mu^*$, we know that all of the following are elements of $M$:
  \begin{itemize}
    \item $A_\eta$;
    \item $\langle g(a ^\frown \langle \mu_M \rangle) \mid a \in [A_\eta]^{n-1}
    \rangle$;
    \item $\langle (w_a, \rho_a, \mb{i}_a, \mb{j}_a) \mid a \in [A_\eta]^{n-1} \rangle$;
    \item $\langle \gamma_{a,j} \mid a \in [A_\eta]^{n-1}, ~ j \in \mb{j}_\alpha \rangle$.
  \end{itemize}
  Moreover, $\tp(\langle u_{a ^\frown \langle \mu_M \rangle} \mid a \in
  [A_\eta]^{n-1} \rangle)$ is a function from an ordinal less than
  $\mu^*$ to $\mathcal{P}([A_\eta]^{n-1})$, so again the closure of
  $M$ implies that $\tp(\langle u_{a ^\frown \langle \mu_M \rangle} \mid a \in
  [A_\eta]^{n-1} \rangle)$ is in $M$.

  For each $a \in [A_\eta]^{n-1}$ and $j \in \mb{j}_a$, let
  \[
    \epsilon_{a,j} := \sup\{\sup(u_b \cap \gamma_{a,j}) \mid b \in [A_\eta]^n \}.
  \]
  Note that $\cf(\gamma_{a,j}) \geq \mu^*$, since otherwise
  there would be a cofinal $x \subseteq \gamma_{a,j}$ such that $x \subseteq M$.
  Therefore, we have $\epsilon_{a,j} \in M \cap \gamma_{a,j}$ and, again by closure,
  $\langle \epsilon_{a,j} \mid a \in [A_\eta]^{n-1}, ~ j \in \mb{j}_a \rangle \in M$.

  In $H(\theta)$, the ordinal $\mu_M$ witnesses the truth of the statement
  asserting the existence of an ordinal $\beta$ such that:
  \begin{itemize}
    \item $\sup(A_\eta) < \beta < \mu$;
    \item $g(a ^\frown \langle \beta \rangle) = g(a^\frown \langle \mu_M \rangle)$
    for all $a \in [A_\eta]^{n-1}$;
    \item $\tp(\langle u_{a ^\frown \langle \beta \rangle} \mid a \in
    [A_\eta]^{n-1} \rangle ) = \tp(\langle u_{a ^\frown \langle \mu_M \rangle} \mid a \in
    [A_\eta]^{n-1} \rangle )$;
    \item $u_{a^\frown \langle \beta \rangle}[\mb{i}_a] = w_a$ for all $a \in
    [A_\eta]^{n-1}$;
    \item $u_{a ^\frown \langle \beta \rangle}(j)$ is in the interval
    $(\epsilon_{a,j}, \gamma_{a,j})$ for all $a \in [A_\eta]^{n-1}$ and
    all $j \in \mb{j}_a$.
  \end{itemize}
  All of the parameters in the above statement are in $M$ (note, for instance,
  that, in the second item, $a^\frown \langle \mu_M \rangle$ is not in $M$,
  but $\langle g(a^\frown \langle \mu_M \rangle) \mid a \in [A_\eta]^{n-1}\rangle$ is).
  Therefore, by elementarity, we can choose $\alpha_\eta \in M$ satisfying the statement.
  It is evident that this choice of $\alpha_\eta$ satisfies the requirements of the construction. 
  In particular, notice that, as a
  consequence of clause (1a) of Remark \ref{type_remark} and the fact that
  $\alpha_\eta$ satisfies the third bullet point above, we have
  $\otp(u_{a ^\frown \langle \alpha_\eta \rangle}) = \otp(u_{a ^\frown \langle \mu_M \rangle})
  = \rho_a$ for all $a \in [A_\eta]^{n-1}$. Also, the last bullet point above ensures 
  that, for all $a \in [A_\eta]^{n-1}$, all $\alpha \in A_\eta \setminus (\max(a) + 1)$, and all 
  $j \in \mb{j}_a$, we have $u_{a^\frown \langle \alpha_\eta \rangle}(j) \notin 
  u_{a^\frown \langle \alpha \rangle}$.
  Therefore, this completes the construction and the proof of the claim.
  \end{proof}

  Let $A$ be as given by Claim \ref{a_claim}. Define a function $g^*$ on $[A]^{n-1}$ by
  letting $g^*(a) := \left\langle g(a^\frown \langle \mu_M \rangle), \rho_a,
  \mb{i}_a, \mb{j}_a \right\rangle$ for all $a \in [A]^{n-1}$.
  Since we know that
  \begin{itemize}
    \item $g: [\mu]^n \rightarrow \nu$;
    \item $\rho_a < \kappa$; and
    \item $\mb{i}_a, \mb{j}_a \subseteq \rho_a$;
  \end{itemize}
  it follows that $g^*$ can be coded as a function from $[A]^{n-1}$ to
  $\max\{\nu, 2^{<\kappa}\}$ which, by the hypothesis of the theorem, is
  less than $\lambda$. Recalling that $\mu^* = \sigma(\lambda, n-1) = |A|$, apply the
  induction hypothesis to $g^*$ and
  $\langle u_{a^\frown \langle \mu_M \rangle} \mid a \in [A]^{n-1} \rangle$
  to find $H_0 \subseteq A$, $k < \nu$, $\rho < \kappa$, and sets
  $\mb{i}, \mb{j} \subseteq \rho$ such that the following statements all hold:
  \begin{itemize}
    \item $\otp(H_0) = \lambda$;
    \item $g(a ^\frown \langle \mu_M \rangle) = k$ for all $a \in [H_0]^{n-1}$;
    \item $\langle \rho_a, \mb{i}_a, \mb{j}_a \rangle = \langle \rho,
    \mb{i}, \mb{j} \rangle$ for all $a \in [H_0]^{n-1}$;
    \item $\langle u_{a ^\frown \langle \mu_M \rangle} \mid a \in [H_0]^{n-1} \rangle$
    is a uniform $(n-1)$-dimensional $\Delta$-system, as witnessed by $\rho$ and by
    sets $\mb{s}_{\mb{m}} \subseteq \rho$ for each $\mb{m} \subseteq n - 1$;
    \item $\langle u_{a ^\frown \langle \mu_M \rangle} \mid
    a \in [H_0]^{n-1} \rangle$ satisfies the ``moreover" clause in the statement
    of the theorem.
  \end{itemize}

  We will thin out $H_0$ to a further unbounded
  subset $H \subseteq H_0$ before the end of the proof. For now, let us begin
  verifying clauses (1) and (2) in the statement of the theorem, noting that
  what we verify for $H_0$ will remain true after further thinning out.

  We first take care of clause (1) of the theorem, simultaneously showing that
  $\otp(b) = \rho$ for all $b \in [H_0]^n$. To this end, fix $b \in [H_0]^n$.
  Then $b$ is of the form
  $a ^\frown \langle \beta \rangle$ for some $\beta \in 
  H_0$ and $a \in [H_0 \cap \beta]^{n-1}$.
  Since $A$ satisfies Clause (1) of Claim \ref{a_claim} and $H_0 \subseteq A$, we have
  $g(a ^\frown \langle \beta \rangle) = g(a ^\frown \langle \mu_M \rangle)$,
  and $\otp(u_{a ^\frown \langle \beta \rangle}) =
  \otp(u_{a ^\frown \langle \mu_M \rangle}) = \rho_a$.
  Then, by our choice of $H_0$, $k$, and $\rho$, we have $g(a ^\frown \langle \mu_M \rangle)
  = k$ and $\rho_a = \rho$. Therefore, $g(b) = k$
  and $\otp(u_b) = \rho$, as desired.

  We now turn our attention to clause (2). The value of $\rho$ that we isolated
  above is the order type that will eventually witness that $\langle u_b \mid b \in [H]^n
  \rangle$ is a uniform $n$-dimensional $\Delta$-system; indeed, by the previous
  paragraph we have $\otp(u_b) = \rho$ for all $b \in [H_0]^n$.
  We next specify the values
  for $\langle \mb{r}_\mb{m} \mid \mb{m} \subseteq n \rangle$ that will
  witness that $\langle u_b \mid b \in [H]^n \rangle$ is a uniform
  $n$-dimensional $\Delta$-system. For each $\mb{m} \subseteq n$, let
  $\mb{m}^- := \mb{m} \cap (n-1)$. If $n-1 \in \mb{m}$, then set
  $\mb{r}_{\mb{m}} := \mb{s}_{\mb{m}^-}$. If $n-1 \notin \mb{m}$, then set
  $\mb{r}_{\mb{m}} := \mb{s}_{\mb{m}^-} \cap \mb{i}$. Note that, in either case,
  we do indeed have $\mb{r}_{\mb{m}} \subseteq \rho$.

  \begin{claim} \label{claim_214}
    For all $\mb{m}_0, \mb{m}_1 \subseteq n$, we have $\mb{r}_{\mb{m}_0
    \cap \mb{m}_1} = \mb{r}_{\mb{m}_0} \cap \mb{r}_{\mb{m}_1}$.
  \end{claim}

  \begin{proof}
    This follows immediately from the fact that $\mb{s}_{\mb{m}_0^-
    \cap \mb{m}_1^-} = \mb{s}_{\mb{m}_0^-} \cap \mb{s}_{\mb{m}_1^-}$
    for all $\mb{m}_0, \mb{m}_1 \subseteq n$.
  \end{proof}

  It remains to verify clause (2) of Definition \ref{general_def}, i.e.,
  if $a,b \in [H]^n$ are aligned and $\mb{r}(a,b) = \mb{m}$, then
  $u_a$ and $u_b$ are aligned, and $\mb{r}(u_a, u_b) = \mb{r}_\mb{m}$. We split
  this verification into two cases, depending on whether or not $n-1$ is
  in $\mb{m}$.

  \begin{claim} \label{top_element_claim}
    Suppose that $b_0, b_1 \in [H_0]^n$ are aligned and $n-1 \in \mb{m} =
    \mb{r}(b_0,b_1)$. Then $u_{b_0}$ and $u_{b_1}$ are aligned and
    $\mb{r}(u_{b_0}, u_{b_1}) = \mb{r}_\mb{m}$.
  \end{claim}

  \begin{proof}
    Since $n-1 \in \mb{m}$, we have $\mb{r}_{\mb{m}} = \mb{s}_{\mb{m}^-}$.
    It also follows from the fact
    that $n-1 \in \mb{m}$ that there is $\beta \in H_0$ such that
    $b_0$ and $b_1$ are of the form $a_0{}^\frown \langle \beta \rangle$
    and $a_1{}^\frown \langle \beta \rangle$ respectively, where
    $a_0, a_1 \in [H_0 \cap \beta]^{n-1}$ are aligned and $\mb{r}(a_0, a_1) = \mb{m}^-$.
    By our choice of $H_0$ and $\mb{s}_{\mb{m}^-}$, it follows that
    $u_{a_0{}^\frown \langle \mu_M \rangle}$ and $u_{a_1{}^\frown \langle
    \mu_M \rangle}$ are aligned and $\mb{r}(u_{a_0{}^\frown \langle \mu_M \rangle},
    u_{a_1{}^\frown \langle \mu_M \rangle}) = \mb{s}_{\mb{m}^-}$.
    The fact that $A$ satisfies Clause (3) of Claim \ref{a_claim} then implies
    that $\tp(u_{b_0}, u_{b_1}) =
    \tp(u_{a_0{}^\frown \langle \mu_M \rangle}, u_{a_1{}^\frown \langle
    \mu_M \rangle})$, and therefore, recalling Remark \ref{type_remark},
    that $u_{b_0}$ and $u_{b_1}$ are aligned,
    with $\mb{r}(u_{b_0}, u_{b_1}) = \mb{s}_{\mb{m}^-} = \mb{r}_\mb{m}$, as
    desired.
  \end{proof}

  We next deal with the case in which $\mb{m} \subseteq n-1$. This will
  take a bit more work. We first establish the following claim.

  \begin{claim} \label{no_top_element_claim}
    Suppose that $b_0, b_1 \in [H_0]^n$, $\mb{m} \subseteq n-1$, and
    $b_0[\mb{m}] = b_1[\mb{m}]$. Then $u_{b_0}[\mb{r}_\mb{m}] =
    u_{b_1}[\mb{r}_\mb{m}]$.
  \end{claim}

  \begin{proof}
    Since $\mb{m} \subseteq n-1$, we have $\mb{m}^- = \mb{m}$ and $\mb{r}_\mb{m}
    = \mb{s}_\mb{m} \cap \mb{i}$. We also know that $b_0$ and $b_1$ are of the form
    $a_0{}^\frown \langle \alpha \rangle$ and $a_1{}^\frown
    \langle \beta \rangle$, respectively, where
    $\alpha, \beta \in H_0$, $a_0, a_1 \in [H_0]^{n-1}$, and
    $a_0[\mb{m}] = a_1[\mb{m}]$. By Proposition \ref{independence_prop}(1)
    applied to $\langle u_{a ^\frown \langle \mu_M \rangle} \mid a \in [H_0]^{n-1} \rangle$,
    $\mb{m}$, $a_0$, and $a_1$, we know that $u_{a_0{}^\frown \langle \mu_M
    \rangle}[\mb{s}_\mb{m}] = u_{a_1{}^\frown \langle \mu_M \rangle}[\mb{s}_\mb{m}]$.
    Now fix $i \in \mb{r}_\mb{m}$. Since $i \in \mb{s}_\mb{m}$,
    it follows that $u_{a_0{}^\frown \langle \mu_M \rangle}(i) = u_{a_1{}^\frown \langle \mu_M
    \rangle}(i)$. Since $i \in \mb{i}$, the fact that $A$ satisfies Clause (1c) of Claim \ref{a_claim}
    implies that $u_{b_0}(i) = u_{a_0{}^\frown \langle \mu_M \rangle}(i)$ and $u_{b_1}(i) =
    u_{a_1{}^\frown \langle \mu_M \rangle}(i)$. Together, this implies that
    $u_{b_0}(i) = u_{b_1}(i)$, and hence $u_{b_0}[\mb{r}_\mb{m}] =
    u_{b_1}[\mb{r}_\mb{m}]$.
  \end{proof}

  As an immediate consequence of Claim \ref{no_top_element_claim}, if $b_0, b_1 \in
  [H_0]^n$ are aligned and $\mb{r}(b_0, b_1) = \mb{m} \subseteq n-1$, then
  $u_{b_0}[\mb{r}_\mb{m}] = u_{b_1}[\mb{r}_\mb{m}]$.
  Showing that $u_{b_0}$ and $u_{b_1}$ are disjoint outside of
  $u_{b_0}[\mb{r}_{\mb{m}}]$ will take some more work and possibly a thinning out of $H_0$.
  For $m < n$ and $a \in [H_0]^m$, choose any $b \in [H_0]^n$ with
  $a = b[m]$ (i.e., $b$ is an end-extension of $a$), and define $u_a := u_b[\mb{r}_m]$.
  By Claim \ref{no_top_element_claim}, this definition is independent of our choice
  of $b$.

  For the following claim, recall our convention that $\max(\emptyset) = -1$.

  \begin{claim} \label{1d_claim}
    Suppose that $m < n$ and $a \in [H_0]^m$. Then
    \[
      \langle u_{a ^\frown \langle \beta \rangle} \mid \beta \in H_0 \setminus
      (\max(a) + 1) \rangle
    \]
    is a $\Delta$-system with root $u_a$.
  \end{claim}

  \begin{proof}
    Suppose first that $m = n-1$, in which case $\mb{r}_m = \mb{i}$.
    Fix $(\alpha, \beta) \in [H_0]^2$ with $\alpha > \max(a)$,
    and consider $u_{a ^\frown \langle \alpha
    \rangle} \cap u_{a ^\frown \langle \beta \rangle}$.
    By Claim \ref{no_top_element_claim}, we have
    $u_{a ^\frown \langle \alpha \rangle}[\mb{i}] = u_{a ^\frown \langle
    \beta \rangle}[\mb{i}]$. Furthermore, for all $j \in \mb{j}$,
    the fact that $A$ satisfies Clause (2) of Claim \ref{a_claim}
    implies that $u_{a ^\frown \langle \beta \rangle}(j) \notin
    u_{a ^\frown \langle \alpha \rangle}$. It follows that
    \[
      u_{a ^\frown \langle \alpha \rangle} \cap u_{a ^\frown \langle
      \beta \rangle} = u_{a ^\frown \langle
      \alpha \rangle}[\mb{i}] = u_a,
    \]
    as desired.

    Next, suppose that $m < n - 1$. Fix $(\beta_0, \beta_1) \in
    [H_0]^2$ with $\beta_0 > \max(a)$, and consider $u_{a ^\frown
    \langle \beta_0 \rangle} \cap u_{a ^\frown \langle \beta_1 \rangle}$.
    Fix $c \in [H_0]^{n-m-1}$ with $\min(c) > \beta_1$ and set
    $b_\ell := a ^\frown \langle \beta_\ell \rangle ^\frown c$ for $\ell < 2$.
    Note that $b_\ell \in [H_0]^n$, that
    $u_{a^\frown \langle \beta_\ell \rangle} = u_{b_\ell}[\mb{r}_{m+1}]$, and that
    $u_a = u_{b_\ell}[\mb{r}_m]$. Observe also that $b_0$ and $b_1$ are aligned
    and that $\mb{r}(b_0, b_1) = n \setminus \{m\}$, so, by Claim \ref{top_element_claim},
    we have $u_{b_0} \cap u_{b_1} = u_{b_0}[\mb{r}_{n \setminus \{m\}}] =
    u_{b_1}[\mb{r}_{n \setminus \{m\}}]$. Putting this together,
    we obtain
    \begin{align*}
      u_{a ^\frown \langle \beta_0 \rangle} \cap u_{a ^\frown \langle \beta_1
      \rangle} &= u_{b_0}[\mb{r}_{m+1}] \cap u_{b_1}[\mb{r}_{m+1}] \\
      &= u_{b_0}[\mb{r}_{m+1}] \cap u_{b_1}[\mb{r}_{m+1}] \cap u_{b_0}[\mb{r}_{n
      \setminus \{m\}}] \cap u_{b_1}[\mb{r}_{n \setminus \{m\}}] \\
      &= u_{b_0}[\mb{r}_m] \cap u_{b_1}[\mb{r}_m] \\
      &= u_a,
    \end{align*}
    where the passage from the second to the third line in the above sequence
    of equations follows from Claim \ref{claim_214} and the observation that
    $(m+1) \cap (n \setminus \{m\}) = m$.
  \end{proof}

  We are now ready to thin out $H_0$ to our final set $H$ witnessing the
  conclusion of the theorem. We will recursively construct an increasing
  sequence $\langle \beta_\xi \mid \xi < \lambda \rangle$
  of ordinals from $H_0$ and then define $H := \{\beta_\xi \mid
  \xi < \lambda \}$.

  Begin by letting $\beta_0 := \min(H_0)$. Next, suppose that $0 < \zeta <
  \lambda$ and $\langle \beta_\xi \mid \xi < \zeta \rangle$
  has been defined. Let $B_\zeta := \{\beta_\xi \mid \xi < \zeta \}$.
  Suppose that $a_0 \in [B_\zeta]^{<n}$ and $a_1 \in [B_\zeta]^{\leq n}$.
  By Claim \ref{1d_claim}, the sequence $\langle u_{a_0{}^\frown \langle \beta \rangle}
  \setminus u_{a_0} \mid \beta \in H_0 \setminus (\sup(B_\zeta) + 1) \rangle$
  consists of pairwise disjoint sets. Since $|u_{a_1}| < \kappa$, it
  follows that, letting $C_{a_0, a_1}$ be the set of $\beta \in H_0 \setminus
  (\sup(B_\zeta) + 1)$ such that $u_{a_0{}^\frown \langle \beta \rangle} \setminus u_{a_0}$
  has nonempty intersection with $u_{a_1}$, we have $|C_{a_0, a_1}|
  < \kappa$. Since the number of such pairs $(a_0, a_1)$ is less than
  $\lambda$, we can find $\beta \in H_0 \setminus
  (\sup(B_\zeta) + 1)$ such that, for all $a_0 \in [B_\zeta]^{<n}$ and
  all $a_1 \in [B_\zeta]^{\leq n}$, we have $\beta \notin C_{a_0, a_1}$.
  Let $\beta_\zeta$ be the least such $\beta$, and continue to the next step of
  the construction.

  To verify that $\langle u_b \mid b \in [H]^n \rangle$
  is a uniform $n$-dimensional $\Delta$-system as witnessed by $\rho$ and
  $\langle \mb{r}_\mb{m} \mid \mb{m} \subseteq n \rangle$, we must show that, for all
  $b_0, b_1 \in [H]^n$, if $b_0$ and $b_1$ are aligned and $\mb{m} =
  \mb{r}(b_0, b_1)$, then $u_{b_0}$ and $u_{b_1}$ are aligned with
  $\mb{r}(u_{b_0}, u_{b_1}) = \mb{r}_\mb{m}$. To this end, fix
  $b_0, b_1 \in [H]^n$ such that $b_0$ and $b_1$ are aligned, and let
  $\mb{m} = \mb{r}(b_0, b_1)$. If $n-1 \in \mb{m}$, then the desired conclusion
  already follows from Claim \ref{top_element_claim}, so assume that
  $n-1 \notin \mb{m}$.

  Without loss of generality, assume that $\max(b_0) < \max(b_1)$.
  By Claim \ref{no_top_element_claim}, we know that
  $u_{b_0}[\mb{r}_{\mb{m}}] = u_{b_1}[\mb{r}_{\mb{m}}]$. It will therefore suffice to show
  that, for all $i < \rho$, if $u_{b_1}(i) \in u_{b_0}$, then $i \in \mb{r}_{\mb{m}}$.

  To this end, fix $i < \rho$ such that $\gamma := u_{b_1}(i) \in u_{b_0}$.
  Let $m^* < n$ be least such that $b_1(m^*) > \max(b_0)$. Notice that
  this $m^*$ exists, since $\max(b_1) > \max(b_0)$.

  \begin{claim} \label{claim_215}
    $\gamma \in u_{b_1[m^*]}$.
  \end{claim}

  \begin{proof}
    We will prove by induction on $\ell \leq n - m^*$ that $\gamma \in
    u_{b_1[n - \ell]}$. First, if $\ell = 0$, then $b_1[n - \ell] = b_1[n] =
    b_1$, and, by assumption, we have $\gamma \in u_{b_1}$.
    Next, suppose that $\ell < n - m^*$ and we have proven that
    $\gamma \in u_{b_1[n-\ell]}$. Then $b_1(n - \ell - 1) > \max(b_0)$, so,
    by our thinning out of $H_0$ to $H$, we know that
    $u_{b_1[n-\ell]} \setminus u_{b_1[n- \ell - 1]}$ is disjoint from
    $u_{b_0}$. Since $\gamma \in u_{b_0}$, it follows
    that $\gamma \in u_{b_1[n - \ell - 1]}$.
  \end{proof}

  \begin{claim} \label{claim_216}
    $\gamma \in u_{b_0[n-1]}$.
  \end{claim}

  \begin{proof}
    Because $b_0$ and $b_1$ are aligned and $\max(b_1) > \max(b_0)$,
    we know that $\max(b_0) \notin b_1$. Since $m^*$ was least with
    $b_1(m^*) > \max(b_0)$, it follows that
    $\max(b_0) > \max(b_1[m^*])$. Therefore, by our thinning out of $H_0$ to
    $H$, we know that $u_{b_0} \setminus u_{b_0[n-1]}$ is disjoint from
    $u_{b_1[m^*]}$. Since $\gamma \in u_{b_1[m^*]}$ by the previous claim,
    it follows that $\gamma \in u_{b_0[n-1]}$.
  \end{proof}

  For $\ell < 2$, let $a_\ell = b_\ell[n-1]$ and $\beta_\ell = b_\ell(n-1)$.
  By the two previous claims and our choice of $\beta_0$ and $\beta_1$, we know that
  \begin{align*}
    \gamma \in u_{b_0[n-1]} \cap u_{b_1[m^*]} &= u_{b_0}[\mb{r}_{n-1}]
    \cap u_{b_1}[\mb{r}_{m^*}]  \\ &\subseteq u_{b_0}[\mb{i}] \cap u_{b_1}[\mb{i}]
    \\ &= u_{a_0{}^\frown \langle \mu_M \rangle}[\mb{i}]
    \cap u_{a_1{}^\frown \langle \mu_M \rangle}[\mb{i}].
  \end{align*}
  In particular, we have $i \in \mb{i}$ and, since
  $u_{b_1}[\mb{i}] = u_{a_1{}^\frown \langle \mu_M \rangle}[\mb{i}]$,
  we also know that $u_{a_1{}^\frown \langle \mu_M \rangle}(i) = \gamma$.

  Since $b_0$ and $b_1$ are aligned, we know that $a_0$ and $a_1$ are aligned,
  and, since $n-1 \notin \mb{m}$, we also have $\mb{r}(a_0, a_1) = \mb{m}$.
  Therefore, by our choice of $H_0$, it follows that
  $u_{a_0{}^\frown \langle \mu_M \rangle}$ and $u_{a_1{}^\frown \langle \mu_M \rangle}$
  are aligned and $\mb{r}(u_{a_0{}^\frown \langle \mu_M \rangle},
  u_{a_1{}^\frown \langle \mu_M \rangle}) = \mb{s}_{\mb{m}}$. Since
  $\gamma \in u_{a_0{}^\frown \langle \mu_M \rangle} \cap
  u_{a_1{}^\frown \langle \mu_M \rangle}$, it follows that $i \in
  \mb{s}_{\mb{m}}$. But since $i \in \mb{i}$ and $\mb{r}_{\mb{m}} =
  \mb{s}_{\mb{m}} \cap \mb{i}$, it follows that $i \in \mb{r}_{\mb{m}}$,
  which finishes the proof of clause (2).

  We finally turn our attention to the ``moreover" clause. To this end,
  fix $m < n$ and $a,b \in [H]^n$ such that $a$ and $b$ are aligned
  above $m$ and $a(m) = b(m)$. Fix $\alpha \in H$ such that $\alpha$ is $m$-possible
  for both $a$ and $b$. We must show that $\tp_{\mathrm{int}}(u_a,u_b) = \tp_{\mathrm{int}}
  (u_{a_{m \mapsto \alpha}}, u_{b_{m \mapsto \alpha}})$.
  Let $a^- = a[n-1]$ and $b^- = b[n-1]$, and let $a^+ := a^-{}^\frown \langle
  \mu_M \rangle$ and $b^+ := b^-{}^\frown \langle \mu_M \rangle$.

  Suppose first that $m = n - 1$, so $a(n-1) = b(n-1)$. By the fact that 
  $A$ satisfies Clause (3) of Claim \ref{a_claim}, we know that
  $\tp(u_a, u_b) = \tp(u_{a^+}, u_{b^+}) = \tp(u_{a_{m \mapsto \alpha}},
  u_{b_{m \mapsto \alpha}})$, and hence $\tp_{\mathrm{int}}(u_a,u_b) = \tp_{\mathrm{int}}
  (u_{a_{m \mapsto \alpha}}, u_{b_{m \mapsto \alpha}})$.

  Suppose next that $m < n - 1$. By the fact that $\langle u_{a ^\frown \langle \mu_M \rangle}
  \mid a \in [H]^{n-1} \rangle$ satisfies the ``moreover" clause in the statement
  of the theorem, we know that
  \begin{align}
    \tp_{\mathrm{int}}(u_{a^+}, u_{b^+}) = \tp_{\mathrm{int}}\left(u_{a^+_{m \mapsto \alpha}},
    u_{b^+_{m \mapsto \alpha}}\right). \tag{$\ast$}
  \end{align}
  Suppose in addition that $a(n-1) = b(n-1)$. Then, again by the fact that $A$ satisfies 
  Clause (3) of Claim \ref{a_claim}, we know that
  \[
    \tp(u_a, u_b) = \tp(u_{a^+}, u_{b^+}) \text{ and }
    \tp\left(u_{a_{m \mapsto \alpha}}, u_{b_{m \mapsto \alpha}}\right) =
    \tp\left(u_{a^+_{m \mapsto \alpha}}, u_{b^+_{m \mapsto \alpha}}\right).
  \]
  Putting this together yields $\tp_{\mathrm{int}}(u_a,u_b) = \tp_{\mathrm{int}}
  (u_{a_{m \mapsto \alpha}}, u_{b_{m \mapsto \alpha}})$, as desired.

  The remaining case is that in which $a(n-1) \neq b(n-1)$. We show that
  $\tp_{\mathrm{int}}(u_a,u_b) \subseteq \tp_{\mathrm{int}}(u_{a_{m \mapsto \alpha}}, u_{b_{m
  \mapsto \alpha}})$. A symmetric argument will yield
  the reverse inclusion. To this end, fix $(i,j) \in \tp_{\mathrm{int}}(u_a,u_b)$.
  Thus, we have $u_a(i) = u_b(j) = \gamma$ for some ordinal $\gamma$.
  Since $a$ and $b$ are aligned above $m$, $a(m) = b(m)$, and $a(n-1)
  \neq b(n-1)$, it follows that $b(n-1) \notin a$ and $a(n-1) \notin b$. An
  argument exactly as in the proofs of Claims \ref{claim_215} and \ref{claim_216}
  then shows that $\gamma \in u_{a^-} \cap u_{b^-} = u_a[\mb{i}] \cap
  u_b[\mb{i}]$, and hence we have $i,j \in \mb{i}$.

  Since $i,j \in \mb{i}$, the fact that $A$ satisfies Clause (1c) of Claim \ref{a_claim} 
  implies that $u_a(i) = u_{a^+}(i)$ and
  $u_b(j) = u_{b^+}(j)$, and hence $(i,j) \in \tp_{\mathrm{int}}(u_{a^+}, u_{b^+})$.
  By equation $(\ast)$ above, we have $(i,j) \in \tp_{\mathrm{int}}\left(u_{a^+_{m
  \mapsto \alpha}}, u_{b^+_{m \mapsto \alpha}}\right)$. Again
  by the facts that $A$ satisfies Clause (1c) of Claim \ref{a_claim} 
  and that $i,j \in \mb{i}$, we have
  $u_{a_{m \mapsto \alpha}}(i) = u_{a^+_{m \mapsto \alpha}}(i)$ and
  $u_{b_{m \mapsto \alpha}}(j) = u_{b^+_{m \mapsto \alpha}}(j)$, so
  $(i,j) \in \tp_{\mathrm{int}}(u_{a_{m \mapsto \alpha}}, u_{b_{m
  \mapsto \alpha}})$, thus finishing the proof.
\end{proof}

The following corollary gives an important special case, obtained from setting
$\kappa = \aleph_0$ and $\lambda = \aleph_1$ in Theorem \ref{general_theorem}.

\begin{corollary} \label{aleph_1_cor}
  Suppose that $1 \leq n < \omega$, and let $\mu := \beth_{n-1}^+$.
  If $\left\langle u_b \mid b \in [\mu]^n \right\rangle$ is a family of finite
  sets of ordinals and $g:[\mu]^n \rightarrow \omega$ is a function, then there is
  $H \in [\mu]^{\aleph_1}$ such that $\left\langle u_b \mid b \in [H]^n \right\rangle$
  is a uniform $n$-dimensional $\Delta$-system and $g \restriction [H]^n$ is
  constant. \qed
\end{corollary}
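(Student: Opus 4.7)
The plan is to deduce Corollary \ref{aleph_1_cor} as a direct instantiation of Theorem \ref{general_theorem} with the parameter choices $\kappa = \nu = \aleph_0$ and $\lambda = \aleph_1$. Since no essentially new combinatorial work is needed, the entire task is to verify that these choices satisfy the hypotheses of the main theorem and that the conclusion there specializes to the statement being claimed.

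First I would check the cardinal arithmetic requirements. We have $\kappa = \aleph_0 < \aleph_1 = \lambda$ and $\nu = \aleph_0 < \aleph_1 = \lambda$. The cardinal $\aleph_1$ is regular, and it is ${<}\aleph_0$-inaccessible in a trivial way: for every $\nu < \aleph_1$, we have $\nu^{<\aleph_0} \leq \aleph_0 < \aleph_1$, since the finite powers of a countable set are countable. Next, by clause (1) of Remark \ref{sigma_remark} applied with $\kappa = \aleph_0$, we have $\sigma(\aleph_1, n) = \beth_{n-1}^+ = \mu$, so the value of $\mu$ in the statement of the corollary matches the value $\sigma(\lambda, n)$ required by the theorem.

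Second, I would check that the input data fits. Each $u_b$ is a finite set of ordinals, so $u_b \in [\mathrm{On}]^{<\aleph_0} = [\mathrm{On}]^{<\kappa}$, as required. The coloring $g: [\mu]^n \to \omega$ is a function into $\nu = \aleph_0$, as required.

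Having verified all hypotheses, Theorem \ref{general_theorem} supplies $H \in [\mu]^{\aleph_1}$ and some $k < \omega$ such that $g(b) = k$ for all $b \in [H]^n$ (which is precisely the statement that $g \restriction [H]^n$ is constant) and such that $\langle u_b \mid b \in [H]^n \rangle$ is a uniform $n$-dimensional $\Delta$-system. This is exactly the conclusion being claimed; the ``moreover'' clause from the theorem is not needed here and may simply be discarded. There is no genuine obstacle in this deduction — the only thing one must be careful about is to invoke Remark \ref{sigma_remark} correctly to identify $\sigma(\aleph_1, n)$ with $\beth_{n-1}^+$.
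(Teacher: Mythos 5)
Your proposal is correct and is exactly the paper's intended argument: the corollary is the special case $\kappa = \nu = \aleph_0$, $\lambda = \aleph_1$ of Theorem \ref{general_theorem}, using Remark \ref{sigma_remark}(1) to identify $\sigma(\aleph_1, n)$ with $\beth_{n-1}^+$. All the hypothesis checks you perform are the right ones and are carried out correctly.
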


We end this section with a discussion of the optimality of Theorem \ref{general_theorem}.
It can be argued that, if $\kappa < \lambda \leq \mu$ are infinite cardinals,
$1 \leq n < \omega$, and $\mu \rightarrow (\lambda)^{2n}_{2^{<\kappa}}$, then
any sequence $\langle u_a \mid a \in [\mu]^n \rangle$ consisting of elements of
$[\mathrm{On}]^{<\kappa}$ can be thinned out to a uniform $n$-dimensional
$\Delta$-system of size $\lambda$ (see \cite{higher_limits} for such an argument).

In general, $\mu \rightarrow (\lambda)^{2n}_{2^{<\kappa}}$ is a stronger
assertion than $\mu \geq \sigma(\lambda, n)$, which is our assumption in
Theorem \ref{general_theorem}, so this argument yields weaker results than
those of Theorem \ref{general_theorem}. However, if $\lambda$ is weakly compact,
then we have $\lambda \rightarrow (\lambda)^{2n}_{2^{<\kappa}}$ for all
$1 \leq n < \omega$ and all $\kappa < \lambda$, so we obtain the following
corollary.

\begin{corollary} \label{wkly_compact_cor}
  Suppose that $1 \leq n < \omega$ and that $\kappa < \lambda$ are infinite
  cardinals, with $\lambda$ being weakly compact. Suppose also that
  $\langle u_a \mid a \in [\lambda]^n \rangle$ is a sequence consisting of
  elements of $[\mathrm{On}]^{<\kappa}$. Then there is $H \in [\lambda]^\lambda$
  such that $\langle u_a \mid a \in [H]^n \rangle$ is a uniform
  $n$-dimensional $\Delta$-system. \qed
\end{corollary}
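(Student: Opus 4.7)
The plan is to apply the partition relation $\lambda \rightarrow (\lambda)^{2n}_{2^{<\kappa}}$, which holds since $\lambda$ is weakly compact and hence inaccessible, so $2^{<\kappa} < \lambda$. Following the approach of \cite{higher_limits}, I would define a coloring $c : [\lambda]^{2n} \to 2^{<\kappa}$ by letting $c(s)$ code the type $\tp(\langle u_{s[I]} \mid I \in [2n]^n \rangle)$ in the sense of Definition \ref{type_def}. Since each $u_a$ has fewer than $\kappa$ elements and the index set $[2n]^n$ is finite, this type is a function from an ordinal below $\kappa$ to $\mathcal{P}([2n]^n)$, so $c$ uses at most $2^{<\kappa}$ colors. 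Applying the partition relation yields a set $H \in [\lambda]^\lambda$ on which $c$ is constant, with common template $T$.

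From $T$ I would extract the data required by Definition \ref{general_def}. Restricting $T$ to a single coordinate $I \in [2n]^n$ and applying Remark \ref{type_remark}(1a) gives a common order type $\rho < \kappa$ with $\otp(u_a) = \rho$ for all $a \in [H]^n$. For each $\mb{m} \subseteq n$, I would define $\mb{r}_{\mb{m}} \subseteq \rho$ as $\mb{r}(u_{s[I_0]}, u_{s[I_1]})$ for some aligned positional pair $(I_0, I_1) \in [2n]^n \times [2n]^n$ with $\mb{r}(I_0, I_1) = \mb{m}$ and some $s \in [H]^{2n}$. Clauses (1) and (3) of Definition \ref{general_def} follow from the constancy of $T$ (for clause (3) by restricting $T$ to a triple of indices), and the alignment portion of clause (2) follows from Remark \ref{type_remark}(2).

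The main obstacle, resolved in \cite{higher_limits}, is verifying that $\mb{r}_{\mb{m}}$ is independent of the particular positional realization of $\mb{m}$ inside $[2n]$: a fixed intersection pattern $\mb{m}$ may correspond to positional pairs reflecting genuinely different interleavings of $a \setminus b$ and $b \setminus a$, and it is not immediately clear that the restrictions of $T$ at such positional pairs coincide on their $\mb{r}$-parts. This independence is established by carefully exploiting the constancy of $c$ on $[H]^{2n}$ together with the freedom, afforded by having $|H| = \lambda$, to build supersets inside $H$ that relate different positional realizations of the same $\mb{m}$; once it is in hand, clause (2) of Definition \ref{general_def} holds for every aligned pair in $[H]^n$, completing the proof.
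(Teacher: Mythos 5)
Your proposal matches the paper's proof: the corollary is obtained exactly by noting that weak compactness gives $\lambda \rightarrow (\lambda)^{2n}_{2^{<\kappa}}$ (since $2^{<\kappa} < \lambda$ by inaccessibility) and then invoking the type-coloring argument from \cite{higher_limits} that the paper sketches in the paragraph immediately preceding the corollary. The paper defers all details --- including the well-definedness of $\mb{r}_{\mb{m}}$ across different positional realizations, which you correctly identify as the crux --- to that reference, so your outline is, if anything, more detailed than the paper's own justification.
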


If $\lambda$ is \emph{not} weakly compact, though, then our result is optimal in
the sense that the value of $\mu$ cannot be decreased. This is true
even disregarding clause (1) or the ``moreover clause" of Theorem \ref{general_theorem}
and focusing only on the higher-dimensional $\Delta$-systems (and not even
requiring that the $\Delta$-systems be uniform), for essentially the same reason that
the Erd\H{o}s-Rado theorem is optimal.

\begin{proposition} \label{optimal_prop_1}
  Suppose that $1 \leq n < \omega$ and $\lambda$ is a regular uncountable cardinal that is
  not weakly compact, and suppose that $\mu < \sigma(\lambda, n)$. Then there is
  a sequence $\langle u_a \mid a \in [\mu]^n \rangle$ consisting of finite sets
  of ordinals such that there is no $H \in [\mu]^\lambda$ for which
  $\langle u_a \mid a \in [H]^n \rangle$ is an $n$-dimensional
  $\Delta$-system.
\end{proposition}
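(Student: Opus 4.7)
My plan is to reduce the statement to a negative Erd\H{o}s-Rado partition relation and then encode a witnessing coloring via singleton sets. The case $n=1$ is vacuous, since $\sigma(\lambda,1) = \lambda$ forces $\mu < \lambda$, whence $[\mu]^\lambda = \emptyset$ and any family of one-element sets of ordinals trivially has no $H$ of size $\lambda$ at all. I therefore focus on $n \geq 2$.

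The one substantive ingredient I would need is the classical negative relation
\[
\mu \not\rightarrow (\lambda)^n_\nu \text{ for some infinite } \nu < \lambda,
\]
valid whenever $\lambda$ is regular uncountable and not weakly compact and $\mu < \sigma(\lambda, n)$. For $\lambda = \kappa^+$ a successor cardinal this is literally Erd\H{o}s and Rado's original negative result $\beth_{n-1}(\kappa) \not\rightarrow (\kappa^+)^n_\kappa$. For $\lambda$ inaccessible but not weakly compact, one combines the failure of $\lambda \rightarrow (\lambda)^2_2$ with Sierpi\'{n}ski-style colorings at the intermediate cardinals $2^{<\sigma(\lambda,k)}$ appearing in the definition of $\sigma(\lambda, n)$, inducting on $n$ in parallel with the inductive proof of the positive Erd\H{o}s-Rado theorem.

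Granted such a coloring $c:[\mu]^n \to \nu$, I would set $u_a := \{c(a)\}$ for each $a \in [\mu]^n$, making each $u_a$ a finite (in fact one-element) set of ordinals. Suppose for contradiction that some $H \in [\mu]^\lambda$ is such that $\langle u_a \mid a \in [H]^n \rangle$ is an $n$-dimensional $\Delta$-system, with roots $\langle R^{\mb{m}}_a \rangle$. Since $|H| = \lambda$ is regular and $n$ is finite, I partition $H$ into $\lambda$-many pairwise disjoint $n$-element sets $\{b_\xi \mid \xi < \lambda\}$. Any two of these are vacuously aligned and satisfy $\mb{r}(b_\xi, b_{\xi'}) = \emptyset$, so $\{c(b_\xi)\} \cap \{c(b_{\xi'})\} = R^\emptyset_\emptyset$ for all $\xi < \xi' < \lambda$.

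Since $R^\emptyset_\emptyset$ is an intersection of singletons, it is either empty or a singleton $\{\gamma^*\}$. If $R^\emptyset_\emptyset = \emptyset$, then $\{c(b_\xi) \mid \xi < \lambda\}$ consists of $\lambda$ distinct ordinals in the range $\nu < \lambda$, a contradiction. If $R^\emptyset_\emptyset = \{\gamma^*\}$, then $c(b_\xi) = \gamma^*$ for every $\xi$; moreover, given any $a \in [H]^n$, at most $n$ of the $b_\xi$ can meet $a$, so I may choose some $b_\xi$ disjoint from $a$, and then the identity $\{c(a)\} \cap \{c(b_\xi)\} = R^\emptyset_\emptyset = \{\gamma^*\}$ forces $c(a) = \gamma^*$. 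Hence $c$ is constantly $\gamma^*$ on $[H]^n$, contradicting the choice of $c$. The main obstacle is thus isolating the negative Erd\H{o}s-Rado input (this is exactly where the failure of weak compactness of $\lambda$ is used); once it is in hand, the passage to $\Delta$-systems is the routine singletons-and-pigeonhole argument above.
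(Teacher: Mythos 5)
Your proposal is correct and takes essentially the same route as the paper: reduce to a negative Erd\H{o}s--Rado-type relation below $\sigma(\lambda,n)$, encode the witnessing coloring into (essentially) singletons, and contradict the existence of the root $R^\emptyset_\emptyset$ using pairwise disjoint, hence aligned, $n$-sets. The paper pins down your ``one substantive ingredient'' by citing $2^{<\lambda}\not\rightarrow(\lambda)^2_2$ for regular uncountable non-weakly-compact $\lambda$ together with the negative stepping-up lemma of Erd\H{o}s--Hajnal--Rado (which handles the inaccessible case you only sketch); otherwise your two-case analysis of $R^\emptyset_\emptyset$ is, if anything, slightly cleaner than the paper's choice of a specific three-set configuration $a_0<a_1<a_2$ with colors $0,1,1$.
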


\begin{proof}
  If $n = 1$, then we have $\mu < \lambda$, so the result is trivial. So
  assume that $n > 1$. Since $\lambda$ is uncountable, regular, and not weakly compact,
  \cite[Corollary 21.5]{ehmr} implies that $2^{<\lambda} \not\rightarrow (\lambda)^2_2$.
  Therefore, by successive applications of \cite[Lemma 5A]{ehr}, which is the lemma
  establishing the optimality of the Erd\H{o}s-Rado theorem, we have, for all
  $m < \omega$, $\beth_m(2^{<\lambda}) \not\rightarrow (\lambda)^{2 + m}_2$.
  By Remark \ref{sigma_remark}(2), $\sigma(\lambda, n) =
  \left(\beth_{n - 2}(2^{<\lambda})\right)^+$. Therefore, we have $\mu \leq \beth_{n-2}(2^{<\lambda})$,
  so there is a function $c:[\mu]^n \rightarrow 2$ that is not constant on $[H]^n$ for
  any $H \in [\mu]^\lambda$. For each $a \in [\mu]^n$, simply let $u_a := c(a)$.
  Now suppose that $H \in [\mu]^\lambda$, and suppose for sake of contradiction
  that $\langle u_a \mid a \in [H]^n \rangle$ is an $n$-dimensional $\Delta$-system,
  as witnessed by roots $\langle R^\mb{m}_a \mid \mb{m} \subseteq n, ~ a \in [H]^{|\mb{m}|}
  \rangle$. Using the fact that $c$ is not constant on $[H']^n$ for any
  unbounded $H' \subseteq H$, we can fix three sets $a_0 < a_1 < a_2$ in $[H]^n$
  such that $c(a_0) = 0$ and $c(a_1) = c(a_2) = 1$. By the definition of an
  $n$-dimensional $\Delta$-system, we should have $u_{a_0} \cap u_{a_1} =
  R^\emptyset_\emptyset = u_{a_1} \cap u_{a_2}$. However, we actually have
  $u_{a_0} \cap u_{a_1} = \emptyset$ and $u_{a_1} \cap u_{a_2} = 1$, which
  is our desired contradiction.
\end{proof}

Before turning to the optimality of the value of $\kappa$ in Theorem \ref{general_theorem},
we pause to summarize the results of this section thus far in a corollary
connecting Theorem \ref{general_theorem} and Proposition \ref{optimal_prop_1}
with the Erd\H{o}s-Rado theorem.

\begin{corollary}\label{general_cor_1}
  Suppose that $1 \leq n < \omega$ and that $\lambda$ and $\mu$ are infinite
  regular cardinals such that $\lambda$ is uncountable but not weakly compact.
  Then the following are equivalent:
  \begin{enumerate}
    \item $\mu \geq \sigma(\lambda, n)$;
    \item $\mu \rightarrow (\lambda)^n_2$;
    \item $\mu \rightarrow (\lambda + (n-1))^n_\nu$ for every $\nu < \lambda$;
    \item for every sequence $\langle u_b \mid b \in [\mu]^n \rangle$ such
    that each $u_b$ is a finite set, there is $H \in [\mu]^\lambda$ such that
    $\langle u_b \mid b \in [H]^n \rangle$ is an $n$-dimensional $\Delta$-system;
    \item the conclusion of Theorem \ref{general_theorem} holds for $n$, $\lambda$,
    and $\mu$, and for any choice of $\kappa$, $\nu$, $g:[\mu]^n \rightarrow \nu$,
    and $\langle u_b \mid b \in [\mu]^n \rangle$ such that
    \begin{enumerate}
      \item $\nu < \lambda$;
      \item $\lambda$ is ${<}\kappa$-inaccessible; and
      \item $u_b \in [\mathrm{On}]^{<\kappa}$ for every $b \in [\mu]^n$.
    \end{enumerate}
  \end{enumerate}
\end{corollary}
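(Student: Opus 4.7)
The plan is to prove the equivalence via the cycle $(1)\Rightarrow(5)\Rightarrow(4)\Rightarrow(2)\Rightarrow(1)$, together with the easy equivalence $(1)\Leftrightarrow(3)$. The implication $(1)\Rightarrow(5)$ is essentially immediate: if $\mu\geq\sigma(\lambda,n)$ then, since all hypotheses of Theorem \ref{general_theorem} are preserved by restricting the sequence $\langle u_b\mid b\in[\mu]^n\rangle$ to any subset of $\mu$ of size $\sigma(\lambda,n)$, we can apply the theorem to that restriction and read off the conclusion. The implication $(5)\Rightarrow(4)$ follows by specializing: given a family of finite sets of ordinals, set $\kappa=\aleph_0$, $\nu=1$, and take $g$ to be identically $0$; then the uniform $n$-dimensional $\Delta$-system supplied by (5) is, by Proposition \ref{independence_prop}(2), an $n$-dimensional $\Delta$-system in the sense of Definition \ref{non_uniform_def}.

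For $(4)\Rightarrow(2)$, I would use a one-line encoding. Given a coloring $c:[\mu]^n\to 2$, define $u_b:=\{c(b)\}$ for each $b\in[\mu]^n$. Apply (4) to obtain $H\in[\mu]^\lambda$ with $\langle u_b\mid b\in[H]^n\rangle$ an $n$-dimensional $\Delta$-system, with roots $\langle R^{\mb{m}}_a\mid\mb{m}\subseteq n,~a\in[H]^{|\mb{m}|}\rangle$. Then $R^{\emptyset}_{\emptyset}\subseteq\{0,1\}$ and, for any pair $b,b'\in[H]^n$ with $b\cap b'=\emptyset$, we must have $u_b\cap u_{b'}=R^{\emptyset}_{\emptyset}$. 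Since $|H|=\lambda\geq\aleph_1$ we can choose three pairwise disjoint members $b_0,b_1,b_2$ of $[H]^n$, and the sets $u_{b_i}$ cannot be pairwise disjoint singletons inside $\{0,1\}$; hence $R^{\emptyset}_{\emptyset}$ is a single value $\{v\}$, forcing $c(b)=v$ on all disjoint pairs. A standard third-set argument (inserting a disjoint $b''$ to bridge any overlapping pair) then gives $c\equiv v$ on all of $[H]^n$.

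The step $(2)\Rightarrow(1)$ I would prove by contraposition, quoting the calculation already carried out in the proof of Proposition \ref{optimal_prop_1}: assuming $\lambda$ is regular, uncountable, and not weakly compact, \cite[Corollary 21.5]{ehmr} gives $2^{<\lambda}\not\rightarrow(\lambda)^2_2$, and iterated application of the stepping-up lemma \cite[Lemma 5A]{ehr} yields $\beth_{n-2}(2^{<\lambda})\not\rightarrow(\lambda)^n_2$ for $n\geq 2$ (the $n=1$ case is immediate since then $\sigma(\lambda,1)=\lambda$). By Remark \ref{sigma_remark}(2), $\sigma(\lambda,n)=\bigl(\beth_{n-2}(2^{<\lambda})\bigr)^+$, so $\mu<\sigma(\lambda,n)$ forces $\mu\leq\beth_{n-2}(2^{<\lambda})$ and hence $\mu\not\rightarrow(\lambda)^n_2$. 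Finally, $(1)\Rightarrow(3)$ is precisely the Erd\H{o}s--Rado theorem as recalled in Remark \ref{sigma_remark}, and $(3)\Rightarrow(2)$ is immediate since $\lambda+(n-1)\geq\lambda$ and $2<\lambda$.

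The only step requiring real care is $(4)\Rightarrow(2)$, and the potential pitfall there is dealing with a possibly empty root $R^{\emptyset}_{\emptyset}$; the three-disjoint-set counting argument above is what closes this case. All other steps are either direct quotations of Theorem \ref{general_theorem}, Proposition \ref{independence_prop}, and Remark \ref{sigma_remark}, or transcriptions of the argument already given in Proposition \ref{optimal_prop_1}.
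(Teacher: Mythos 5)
Your proof is correct and follows essentially the same cycle of implications as the paper's ($(1)\Rightarrow(5)\Rightarrow(4)\Rightarrow(2)\Rightarrow(1)$ together with $(1)\Rightarrow(3)\Rightarrow(2)$), citing the same ingredients: Theorem \ref{general_theorem}, Proposition \ref{independence_prop}(2), and the two halves of the argument in Proposition \ref{optimal_prop_1}. The only cosmetic differences are that you run $(4)\Rightarrow(2)$ directly rather than by contraposition, encoding $c(b)$ as the singleton $\{c(b)\}$ instead of the ordinal $c(b)$, and that your choice $\nu=1$ in $(5)\Rightarrow(4)$ should read $\nu=\aleph_0$, since Theorem \ref{general_theorem} requires $\nu$ to be an infinite cardinal.
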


\begin{proof}
  $(1) \Rightarrow (3)$ is the Erd\H{o}s-Rado theorem, or the pigeonhole principle
  if $n = 1$ (it can also be extracted
  from our proof of Theorem \ref{general_theorem}), and $(3) \Rightarrow (2)$
  is immediate. $(1) \Rightarrow (5)$ is Theorem \ref{general_theorem}, and
  $(5) \Rightarrow (4)$ follows by setting $\kappa = \aleph_0$ in Theorem \ref{general_theorem} and invoking Proposition \ref{independence_prop}(2).
  $(4) \Rightarrow (2)$ is precisely the second half of the proof of Proposition
  \ref{optimal_prop_1}. Finally, $(2) \Rightarrow (1)$ follows from the optimality
  of the Erd\H{o}s-Rado theorem (the argument in the first half of the proof of
  Proposition \ref{optimal_prop_1}).
\end{proof}

We now turn to the optimality of $\kappa$ in Theorem \ref{general_theorem};
in other words, we investigate the necessity of the requirement that $\lambda$
be ${<}\kappa$-inaccessible in the statement of the theorem.
It turns out that the optimality of $\kappa$ is slightly more
complicated than the optimality of $\mu$, since even if $\lambda$ is not
${<}\kappa$-inaccessible, it could
be the case that $\sigma(\lambda, n) = \sigma(\lambda^*,n)$ for some $\lambda^*
> \lambda$ such that $\lambda^*$ \emph{is} ${<}\kappa$-inaccessible. For example,
suppose that $2^{\aleph_0} = \aleph_2$ and $2^{\aleph_1} = 2^{\aleph_2} = \aleph_3$.
Then $\sigma(\aleph_2, n) = \sigma(\aleph_3, n)$ for all $n \geq 2$. Also,
$\aleph_3$ is ${<}\aleph_1$-inaccessible, so Theorem \ref{general_theorem}
holds for $\lambda = \aleph_3$ and $\kappa = \nu = \aleph_1$ (and any value of $n$).
This immediately implies that the conclusion of Theorem \ref{general_theorem} holds for
$\lambda = \aleph_2$, $\kappa = \nu = \aleph_1$, and $2 \leq n < \omega$,
despite the fact that $\aleph_2$ is not ${<}\aleph_1$-inaccessible. We can show
however, that this is essentially the only way in which the value of $\kappa$
in Theorem \ref{general_theorem} can fail to be optimal.

\begin{proposition} \label{optimal_prop_2}
  Suppose that $1 \leq n < \omega$ and $\kappa < \lambda$ are infinite cardinals
  such that $\lambda$ is regular and not ${<}\kappa$-inaccessible. Let
  $\lambda^* = (\lambda^{<\kappa})^+$, and suppose that $\mu < \sigma(\lambda^*, n)$.
  Then there is a sequence $\langle u_a \mid a \in [\mu]^n \rangle$ consisting
  of elements of $[\lambda]^{<\kappa}$ such that there is no $H \in [\mu]^\lambda$
  for which $\langle u_a \mid a \in [H]^n \rangle$ is an $n$-dimensional
  $\Delta$-system.
\end{proposition}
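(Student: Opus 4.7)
The plan is to combine Proposition \ref{optimal_prop_1} applied at $\lambda^*$ with the failure of ${<}\kappa$-inaccessibility of $\lambda$. First, since $\lambda^*$ is a successor cardinal (and hence not weakly compact) and $\mu < \sigma(\lambda^*, n)$, Proposition \ref{optimal_prop_1} applied with $\lambda^*$ in place of $\lambda$ produces a family $\langle v_a \mid a \in [\mu]^n \rangle$ of finite sets of ordinals admitting no $K \in [\mu]^{\lambda^*}$ for which $\langle v_a \mid a \in [K]^n \rangle$ is an $n$-dimensional $\Delta$-system. Second, using the hypothesis that $\lambda$ is not ${<}\kappa$-inaccessible, I would fix $\nu < \lambda$ with $\nu^{<\kappa} \geq \lambda$, together with an injection $\iota: \lambda \to [\nu]^{<\kappa}$, which serves to encode $\lambda$ many labels as ${<}\kappa$-subsets of $\nu$.

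For each $a \in [\mu]^n$, I would then define $u_a \in [\lambda]^{<\kappa}$ as a disjoint union of two pieces: an ``encoding piece'' living in $[\nu]^{<\kappa}$, obtained by applying $\iota$ to an ordinal in $\lambda$ chosen to depend on $a$ in a controlled way via some auxiliary function $\pi: [\mu]^n \to \lambda$, together with a ``coloring piece'' obtained from $v_a$ and shifted into $[\nu, \lambda)$ so as to be disjoint from the encoding piece. Concretely, one could try $u_a := \iota(\pi(a)) \cup \{\nu + m \mid m \in v_a\}$, where $\pi$ is to be chosen so as to make the main claim go through.

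Now suppose for contradiction that $H \in [\mu]^\lambda$ gives an $n$-dimensional $\Delta$-system on the $u_a$'s. The disjoint-union structure of $u_a$ separately forces both components to satisfy compatible intersection patterns. The encoding pieces $\iota(\pi(a))$ form a family of ${<}\kappa$-subsets of $\nu$ whose $\Delta$-system property, when applied to $\lambda$ many aligned pairs, would produce $\lambda$ pairwise disjoint nonempty ``legs'' inside $\nu$, contradicting $|\nu| < \lambda$---unless the encoding pieces collapse along a large subfamily. In the collapsing case, my plan would be to exploit the cardinality $\nu^{<\kappa} \geq \lambda$ to ``lift'' the $\lambda$-size index set $H$ into a $\lambda^*$-size subfamily that inherits a $\Delta$-subsystem structure on the shifted $v_a$'s, contradicting the choice of $\langle v_a \rangle$ made in the first paragraph.

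The main obstacle, then, is precisely the choice of $\pi$ and the verification of the lifting step in the collapsing case, which must promote a $\lambda$-size $\Delta$-subsystem of the $u_a$'s into a $\lambda^*$-size $\Delta$-subsystem of the $v_a$'s. This step is analogous to the Erd\H{o}s-Rado tree-type constructions invoked in \cite{ehr} and used implicitly in the proof of Proposition \ref{optimal_prop_1}; I expect the precise argument will require an analysis of how many distinct $\iota$-preimages of a given encoding value can coexist with the $\Delta$-structure, leveraging $\nu^{<\kappa} \geq \lambda$ to multiply the $\lambda$-many indices into a family of $\lambda^*$-many indices witnessing the claim.
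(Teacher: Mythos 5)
Your overall architecture --- a two-part set $u_a$ with an ``encoding piece'' in $[\nu]^{<\kappa}$ exploiting $\nu^{<\kappa}\geq\lambda$ and a second piece carrying combinatorial information, followed by a case split on whether the encoding pieces produce $\lambda$ many disjoint legs inside $\nu$ --- matches the shape of the paper's argument. But the proposal has a fatal gap exactly where you locate the ``main obstacle'': the lifting step cannot work, and no choice of $\pi$ will save it. The family $\langle v_a\rangle$ obtained from Proposition \ref{optimal_prop_1} at $\lambda^*$ only obstructs $\Delta$-subsystems indexed by sets of size $\lambda^*$, whereas the hypothesis for contradiction hands you only $\lambda$ many indices; there is no mechanism for manufacturing $\lambda^{<\kappa}{}^+$ many indices out of $\lambda$ many, and the putative larger set $K$ would have to satisfy the $\Delta$-system condition on pairs lying entirely outside $[H]^n$, about which nothing is known. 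Worse, the $v$-component is intrinsically too weak: the $v_a$ are finite (indeed elements of $\{\emptyset,\{0\}\}$), so whenever $\mu\geq\sigma(\lambda,n)$ Theorem \ref{general_theorem} (with $\kappa=\aleph_0$) guarantees that \emph{every} such family, in particular $\langle v_a\rangle$ restricted to your collapsing subfamily, admits $\lambda$-sized $\Delta$-subsystems. A $2$-coloring of $[\mu]^n$ simply cannot obstruct homogeneity at size $\lambda$ when $\mu$ is this large, so in the collapsing case you are left with no contradiction.

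The paper's proof replaces your two ingredients as follows. Instead of the $2$-coloring with no $\lambda^*$-sized homogeneous set, it uses a coloring $c:[\mu]^n\to\lambda^{<\kappa}$ with no \emph{infinite} homogeneous set (obtained by stepping up $2^{\lambda^{<\kappa}}\not\rightarrow(3)^2_{\lambda^{<\kappa}}$); the large number of colors is what compensates for the small homogeneous sets. Instead of an arbitrary injection $\iota$, it uses two $\subseteq$-antichains: $\langle x_\eta\mid\eta<\lambda\rangle$ in $[\nu]^{<\kappa}$ and $\langle y_\alpha\mid\alpha<\lambda^{<\kappa}\rangle$ in $[\lambda\setminus\nu]^{<\kappa}$, linked by $\eta_\alpha=\sup(y_\alpha)$, and sets $u_a:=y_{c(a)}\cup x_{\eta_{c(a)}}$. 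The antichain condition is essential (and missing from your sketch): without it, distinct encoding pieces need not have nonempty legs over the root, so the ``$\lambda$ disjoint nonempty subsets of $\nu$'' contradiction evaporates. Given a $\lambda$-sized $H$, one takes $\lambda$ many pairwise disjoint $a_\gamma\in[H]^n$ (which are vacuously aligned, so all intersections equal the root $R^\emptyset_\emptyset$), uses the failure of $c\restriction[H]^n$ to be constant on disjoint $n$-sets to rule out all $u_{a_\gamma}$ being equal, and then splits on whether $\sup\{\eta_{c(a_\gamma)}\}<\lambda$: if so, the $y$-parts yield $\lambda$ disjoint nonempty subsets of a bounded set; if not, the $x$-parts yield $\lambda$ disjoint nonempty subsets of $\nu$. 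Either way $|H|=\lambda$ is contradicted directly, with no appeal to any $\lambda^*$-sized object. You would need to restructure your proof along these lines; the reduction to Proposition \ref{optimal_prop_1} at $\lambda^*$ should be abandoned.
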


\begin{proof}
  Fix a cardinal $\nu < \lambda$ such that $\nu \geq \kappa$ and
  $\nu^{<\kappa} \geq \lambda$.
  Next, fix an injective sequence $\langle x_\eta \mid \eta < \lambda \rangle$
  of elements of $[\nu]^{<\kappa}$ such that, for all distinct $\eta, \xi
  < \lambda$, neither of $x_\eta$ nor $x_\xi$ is a subset of the other.
  One way to see that this can be done is the following. Let
  $\langle \kappa_i \mid i < \theta \rangle$ be such that
  \begin{itemize}
    \item if $\kappa$ is a successor cardinal, then $\theta = 1$ and
    $\kappa_0$ is its immediate predecessor (so $\nu^{<\kappa} = \nu^{\kappa_0}$); or
    \item if $\kappa$ is a limit cardinal, then $\theta = \cf(\kappa)$ and
    $\langle \kappa_i \mid i < \theta \rangle$ is a strictly increasing sequence of
    cardinals converging to $\kappa$.
  \end{itemize}
  Now let $\langle f_\eta \mid \eta < \lambda \rangle$ be an injective
  sequence of elements of $\bigcup_{i < \theta} {^{\kappa_i}}\nu$.
  Partition $\nu$ into pairwise
  disjoint pieces $\langle A_i \mid i < \theta \rangle$,
  each of size $\nu$ and, for each $i < \theta$, let
  $\pi_i : \kappa_i \times \nu \rightarrow A_i$ be a bijection.
  Now, viewing elements of ${^{\kappa_i}}\nu$ as subsets of
  $\kappa_i \times \nu$, for each $\eta < \lambda$, let $i_\eta$ be the
  unique $i < \theta$ such that $f_\eta \in {^{\kappa_i}}\nu$, and let
  $x_\eta := \pi_{i_\eta}``f_\eta$. Then $\langle x_\eta \mid \eta < \lambda \rangle$
  is as desired. Similarly, fix an injective sequence
  $\langle y_\alpha \mid \alpha < \lambda^{<\kappa} \rangle$ of elements of
  $[\lambda \setminus \nu]^{<\kappa}$ such that, for all $\alpha < \beta < \lambda^{<\kappa}$, neither of 
  $y_\alpha$ nor $y_\beta$ is a subset of the other.
  For all $\alpha < \lambda^{<\kappa}$, let $\eta_\alpha = \sup(y_\alpha)$.
  Since $\lambda$ is regular, we have $\eta_\alpha < \lambda$.

  Suppose first that $n = 1$. Then $\sigma(\lambda^*, 1) = \lambda^* =
  (\lambda^{<\kappa})^+$, so we can assume that $\mu = \lambda^{<\kappa}$. For all
  $\alpha < \mu$, let $u_\alpha := y_\alpha \cup x_{\eta_\alpha}$. Fix $H \in
  [\mu]^\lambda$, and suppose for sake of contradiction that $\langle u_\alpha
  \mid \alpha \in H \rangle$ is a $\Delta$-system, with root $r$. Let
  $r^- := r \cap \nu$ and $r^+ := r \setminus \nu$. Note that, for all
  distinct $\alpha, \beta \in H$, we have $y_\alpha \cap y_\beta = r^+$
  and $x_{\eta_\alpha} \cap x_{\eta_\beta} = r^-$.

  There are now two cases to consider, depending on whether or not
  $\{\eta_\alpha \mid \alpha \in H\}$ is unbounded in $\lambda$. Suppose first that
  $\eta^* := \sup\{\eta_\alpha \mid \alpha \in H \}$ is less than $\lambda$.
  Then $\langle y_\alpha \setminus r^+ \mid \alpha \in H \rangle$ is an injective
  sequence of pairwise disjoint nonempty subsets of $\eta^* + 1$, contradicting the
  fact that $|H| = \lambda > \eta^* + 1$. Suppose next that $\eta^* = \lambda$.
  Then $\{x_{\eta_\alpha} \setminus r^-
  \mid \alpha \in H\}$ is a set of size $\lambda$ consisting of pairwise disjoint nonempty
  subsets of $\nu$, contradicting the fact that $\lambda > \nu$.

  Now suppose that $n > 1$. Then, by Remark \ref{sigma_remark}(1),
  we know that $\sigma(\lambda^*, n) = (\beth_{n-1}(\lambda^{<\kappa}))^+$, so we
  can assume that $\mu = \beth_{n-1}(\lambda^{<\kappa})$. For any infinite cardinal $\chi$,
  the coloring $d:[{^\chi}2]^2 \rightarrow \chi$ defined by letting
  $d(f,g)$ be the least $\xi < \chi$ for which $f(\xi) \neq g(\xi)$ for all
  distinct $f,g \in {^\chi}2$ witnesses the negative partition relation
  $2^\chi \not\rightarrow (3)^2_\chi$. Therefore, setting $\chi = \lambda^{<\kappa}$
  and repeatedly applying \cite[Lemma 5A]{ehr}, we have $\mu
  \not\rightarrow (\aleph_0)^n_{\lambda^{<\kappa}}$. Let
  $c:[\mu]^n \rightarrow \lambda^{<\kappa}$ witness this negative partition relation.

  Now define $\langle u_a \mid a \in [\mu]^n \rangle$ by letting $u_a :=
  y_{c(a)} \cup x_{\eta_{c(a)}}$ for all $a \in [\mu]^n$. Fix $H \in [\mu]^\lambda$,
  and suppose for sake of contradiction that $\langle u_a \mid a \in [H]^n \rangle$
  is an $n$-dimensional $\Delta$-system, as witnessed by roots $\langle R^{\mb{m}}_a
  \mid \mb{m} \subseteq n, ~ a \in [H]^{|\mb{m}|}\rangle$. Let $r := R^\emptyset_\emptyset$,
  $r^- := r \cap \nu$, and $r^+ := r \setminus \nu$. Since $c$ witnesses
  $\mu \not\rightarrow (\aleph_0)^n_{\lambda^{<\kappa}}$, we can find disjoint sets
  $a_0, a_1 \in [H]^n$ such that $c(a_0) \neq c(a_1)$. Now arbitrarily fix
  a set $a_\gamma \in [H]^n$ for each $2 \leq \gamma < \lambda$ in such a way that
  $\langle a_\gamma \mid \gamma < \lambda \rangle$ is an injective sequence of
  pairwise disjoint sets. By the definition of $\langle u_a \mid a \in [H]^n \rangle$ and 
  our choice of $r$, $r^+$, and $r^-$, we know that, for all $\gamma < \delta < \lambda$,
  we have $u_{a_\gamma} \cap u_{a_\delta} = r$, and hence
  $y_{c(a_\gamma)} \cap y_{c(a_\delta)} = r^+$ and
  $x_{\eta_{c(a_\gamma)}} \cap x_{\eta_{c(a_\delta)}} = r^-$.

  There are now two possibilities. First, suppose that there are $\gamma < \delta
  < \lambda$ for which $u_{a_\gamma} = u_{a_\delta}$. Then we can find
  $\ell < 2$ for which $u_{a_\ell} \neq u_{a_\gamma}$ (and hence
  $u_{a_\gamma} \not\subseteq u_{a_\ell}$). But now
  we are in the same situation as in the proof of Proposition \ref{optimal_prop_1}:
  we must have $u_{a_\gamma} \cap u_{a_\delta} = r =
  u_{a_\ell} \cap u_{a_\gamma}$, but $u_{a_\gamma} \cap u_{a_\delta} = u_{a_\gamma}$,
  and since $u_{a_\gamma} \not\subseteq u_{a_\ell}$, we have
  $u_{a_\ell} \cap u_{a_\gamma} \neq u_{a_\gamma}$, which is a contradiction.

  The other possibility is that the sets $\langle u_{a_\gamma} \mid \gamma < \lambda
  \rangle$ are all pairwise disjoint. There are now two subcases, depending on
  whether or not $\eta^* := \sup\{\eta_{c(a_\gamma)} \mid \gamma < \lambda\}$ is
  equal to $\lambda$. If $\eta^* < \lambda$, then $\langle u_{a_\gamma} \setminus
  r \mid \gamma < \lambda \rangle$ is an injective sequence of pairwise disjoint nonempty
  subsets of $\max\{\eta^* + 1, \nu\}$. If $\eta^* = \lambda$, then
  $\{x_{\eta_{c(a_\gamma)}} \setminus r^- \mid \gamma < \lambda\}$ is a set of
  size $\lambda$ consisting of pairwise disjoint nonempty subsets of $\nu$. In either case,
  we contradict the fact that $\lambda > \max\{\eta^* + 1, \nu\}$.
\end{proof}

If $\kappa < \lambda$ are both regular infinite cardinals and $\lambda$ is
not ${<}\kappa$-inaccessible, then $(\lambda^{<\kappa})^+$ is the least
${<}\kappa$-inaccessible cardinal greater than or equal to $\lambda$ (it can fail to be
${<}\kappa$-inaccessible if $\kappa$ is singular). We can therefore combine
the results of this section in the following equivalence.

\begin{corollary} \label{general_cor_2}
  Suppose that $1 \leq n < \omega$ and $\kappa < \lambda$ are infinite regular cardinals.
  Let $\lambda^*$ be the least ${<}\kappa$-inaccessible cardinal greater than
  or equal to $\lambda$, and suppose that $\mu$ is an infinite cardinal. Then
  the following are equivalent.
  \begin{enumerate}
    \item $\mu \geq \sigma(\lambda^*, n)$;
    \item the conclusion of Theorem \ref{general_theorem} holds for
    $n$, $\kappa$, $\lambda$, and $\mu$ with any choice of $\nu < \lambda$,
    $g:[\mu]^n \rightarrow \nu$, and $\langle u_b \mid b \in [\mu]^n \rangle$
    with each $u_b$ in $[\mathrm{On}]^{<\kappa}$;
    \item for every sequence $\langle u_a \mid a \in [\mu]^n \rangle$ such that
    each $u_a$ is a set of cardinality less than $\kappa$, there is $H \in
    [\mu]^\lambda$ such that $\langle u_a \mid a \in [H]^n \rangle$ is an
    $n$-dimensional $\Delta$-system.
  \end{enumerate}
\end{corollary}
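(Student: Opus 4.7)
The plan is to establish the three-way equivalence cyclically, via $(1) \Rightarrow (2) \Rightarrow (3) \Rightarrow (1)$, drawing on Theorem \ref{general_theorem} together with the two optimality results, Propositions \ref{optimal_prop_1} and \ref{optimal_prop_2}.

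For $(1) \Rightarrow (2)$, I would first observe that $\lambda^*$ is regular and ${<}\kappa$-inaccessible by construction, so the hypothesis $\mu \geq \sigma(\lambda^*, n)$ puts us squarely in the setting of Theorem \ref{general_theorem} applied to $n$, $\kappa$, $\lambda^*$, and $\mu$. For any admissible data $\nu < \lambda$, $g:[\mu]^n \to \nu$, and $\langle u_b \mid b \in [\mu]^n \rangle$ with each $u_b \in [\mathrm{On}]^{<\kappa}$, that theorem produces some $H' \in [\mu]^{\lambda^*}$ and $k < \nu$ witnessing all three parts of its conclusion. Since $\lambda^* \geq \lambda$, I would then pick any $H \in [H']^\lambda$; each of the relevant uniformities---being a uniform $n$-dimensional $\Delta$-system with the same ordinal $\rho$ and roots $\langle \mb{r}_{\mb{m}} \mid \mb{m} \subseteq n \rangle$, the constancy of $g$ on $[H]^n$, and the $\tp_{\mathrm{int}}$ clause---is stated in terms of tuples drawn from the indexing set and is therefore inherited by arbitrary subsets.

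The implication $(2) \Rightarrow (3)$ is essentially a specialization: given $\langle u_a \mid a \in [\mu]^n \rangle$ with each $u_a$ of cardinality less than $\kappa$, I would apply the content of (2) with $\nu := 1$ and the constant function $g \equiv 0$ to obtain $H \in [\mu]^\lambda$ such that $\langle u_a \mid a \in [H]^n \rangle$ is a uniform $n$-dimensional $\Delta$-system, and then invoke Proposition \ref{independence_prop}(2) to conclude that it is an $n$-dimensional $\Delta$-system in the sense of Definition \ref{non_uniform_def}.

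For $(3) \Rightarrow (1)$, I would argue by contrapositive, splitting on whether $\lambda$ itself is ${<}\kappa$-inaccessible. If $\lambda$ is not ${<}\kappa$-inaccessible, then $\lambda^* = (\lambda^{<\kappa})^+$ by the discussion immediately preceding the corollary, and the assumption $\mu < \sigma(\lambda^*, n)$ places us exactly in the setting of Proposition \ref{optimal_prop_2}, which directly yields the desired counterexample sequence. If on the other hand $\lambda$ is ${<}\kappa$-inaccessible, then $\lambda^* = \lambda$ and the assumption reads $\mu < \sigma(\lambda, n)$; Proposition \ref{optimal_prop_1} then supplies a counterexample consisting of finite sets, which are in particular of size less than $\kappa$. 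The main point requiring care is the ``inherited by subsets'' step in $(1) \Rightarrow (2)$, especially that the $\tp_{\mathrm{int}}$ uniformity of the ``moreover'' clause genuinely restricts to $H \in [H']^\lambda$, but this is ultimately a routine unwinding of definitions once one notes that each conclusion is quantified only over tuples within the chosen indexing set.
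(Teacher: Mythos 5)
Your proof is correct and follows essentially the same route as the paper's: $(1) \Rightarrow (2)$ via Theorem \ref{general_theorem} applied at $\lambda^*$ with the conclusion passed down to a subset of size $\lambda$ (the uniformities are all quantified over tuples from the index set, so they restrict), $(2) \Rightarrow (3)$ by specialization, and $(3) \Rightarrow (1)$ by contrapositive with the case split on whether $\lambda$ is ${<}\kappa$-inaccessible, using Propositions \ref{optimal_prop_1} and \ref{optimal_prop_2}. Two cosmetic points only: in $(2) \Rightarrow (3)$ the parameter $\nu$ must be an infinite cardinal (take $\nu = \aleph_0$ rather than $1$), and the sets $u_a$ in $(3)$ need not be sets of ordinals, so one should first transport the family via a bijection onto an ordinal before invoking $(2)$.
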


\begin{proof}
  $(1) \Rightarrow (2)$ follows from Theorem \ref{general_theorem}, and
  $(2) \Rightarrow (3)$ is immediate. If $\lambda$ is ${<}\kappa$-inaccessible,
  then $\lambda^* = \lambda$, in which case $(3) \Rightarrow (1)$ follows from
  Proposition \ref{optimal_prop_1}. If $\lambda$ is not ${<}\kappa$-inaccessible,
  then $(3) \Rightarrow (1)$ follows from Proposition \ref{optimal_prop_2} and
  the observation that $\lambda^* = (\lambda^{<\kappa})^+$ in this case.
\end{proof}

\section{Chain conditions} \label{chain_condition_section}

One of the primary uses of the classical $\Delta$-system lemma is in proving that certain
forcing notions satisfy chain conditions. For example, one of the first applications
that many people learn is in the proof that the forcing
notion to add any number of Cohen reals is $\kappa$-Knaster for every regular
uncountable $\kappa$:

\begin{lemma} \label{knaster_lemma}
  Let $\chi$ be any infinite cardinal, and let $\bb{P} = \mathrm{Add}(\omega, \chi)$
  be the forcing to add $\chi$-many Cohen reals. Suppose that $\kappa$ is a
  regular uncountable cardinal and $\langle p_\alpha \mid \alpha < \kappa \rangle$ is a
  sequence of conditions from $\bb{P}$. Then there is an unbounded $A \subseteq
  \kappa$ such that $\langle p_\alpha \mid \alpha \in A \rangle$ consists of
  pairwise compatible conditions.
\end{lemma}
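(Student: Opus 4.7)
The plan is to apply the classical one-dimensional $\Delta$-system lemma (Lemma \ref{general_1d_lemma}) in its most familiar form (with $\kappa$ replaced by $\aleph_0$ and $\lambda$ by our given regular uncountable cardinal, which we will call $\kappa$ here to match the statement). Recall that a condition $p \in \mathrm{Add}(\omega, \chi)$ is a finite partial function from $\chi \times \omega$ to $2$. The natural combinatorial object to extract uniformity from is therefore the family of finite supports $\langle \dom(p_\alpha) \mid \alpha < \kappa \rangle$.

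First I would apply Lemma \ref{general_1d_lemma} to this family: since $\kappa$ is regular and uncountable and each $\dom(p_\alpha)$ is finite (hence of size less than $\aleph_0$), we obtain an unbounded $A_0 \subseteq \kappa$ such that $\langle \dom(p_\alpha) \mid \alpha \in A_0 \rangle$ is a $\Delta$-system with some finite root $r \subseteq \chi \times \omega$. Next, since $r$ is finite, there are only finitely many functions from $r$ to $2$; by the pigeonhole principle (using again that $\kappa$ is regular and uncountable) I can thin $A_0$ to an unbounded $A \subseteq A_0$ on which the restriction $p_\alpha \restriction r$ takes a single constant value $p^* : r \to 2$.

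Finally, for any $\alpha, \beta \in A$ with $\alpha \neq \beta$, the condition $p_\alpha \cup p_\beta$ is a well-defined finite partial function from $\chi \times \omega$ to $2$: the domains $\dom(p_\alpha)$ and $\dom(p_\beta)$ agree exactly on $r$, and both conditions coincide with $p^*$ there, so there is no conflict. Hence $p_\alpha \cup p_\beta \in \bb{P}$ is a common extension, witnessing $p_\alpha \parallel p_\beta$. This gives the required unbounded $A \subseteq \kappa$ of pairwise compatible conditions.

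There is no real obstacle here; the argument is the textbook one, and the only subtle point is the combined use of the $\Delta$-system lemma together with a pigeonhole step to control the \emph{values} on the root (not merely the shape of the supports). The section is presumably included precisely in order to set up the comparison with a higher-dimensional analogue using the uniform $n$-dimensional $\Delta$-systems developed in Section \ref{main_result_section}.
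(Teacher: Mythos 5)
Your proof is correct and is exactly the standard argument the paper has in mind (the paper states this lemma without proof, as a well-known motivating fact): apply the finite $\Delta$-system lemma to the domains, pigeonhole on the finitely many possible restrictions to the root, and observe that any two resulting conditions agree on the intersection of their domains. The only cosmetic difference is that the paper elsewhere treats conditions in $\mathrm{Add}(\omega,\chi)$ as finite partial functions from $\chi$ to $2$ rather than from $\chi\times\omega$ to $2$, which changes nothing.
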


During forcing constructions involving higher-dimensional combinatorial statements,
one frequently encounters sequences of conditions indexed not by single ordinals
but by $n$-element sets of ordinals for some $n > 1$. One would then like to
find a large set such that the restriction of the sequence to that set satisfies
certain uniformities analogous to the uniformities exhibited by
$\langle p_\alpha \mid \alpha \in A \rangle$ in Lemma \ref{knaster_lemma}.
A first, na\"{i}ve attempt at formulating a statement to this effect, similar to
our overly optimistic first attempt to define higher-dimensional $\Delta$-systems
at the start of Section \ref{def_section}, might look vaguely as follows:

\begin{quote}
  Let $\chi$ be an infinite cardinal and $1 \leq n < \omega$, and let $\bb{P}$ be the forcing to add
  $\chi$-many Cohen reals. Then there is a sufficiently large
  regular cardinal $\mu \leq \chi$ such that, for every sequence
  $\langle p_{a} \mid a \in [\mu]^n \rangle$ of conditions in $\bb{P}$, there is a ``large"
  set $H \subseteq \mu$ such that $\langle p_a \mid a \in [H]^n \rangle$
  consists of pairwise compatible conditions.
\end{quote}

It is easily seen that such a statement cannot possibly hold if $n > 1$, however. Indeed
suppose that $n = 2$ and, for all $(\alpha, \beta) \in [\mu]^2$, define a condition
$p_{\alpha\beta} \in \bb{P}$ by letting $\dom(p_{\alpha\beta}) := \{\alpha, \beta\}$,
$p_{\alpha\beta}(\alpha) := 0$, and $p_{\alpha\beta}(\beta) := 1$
(we are thinking of conditions in $\bb{P}$ as
being finite partial functions from $\chi$ to $2$). Then
$p_{\alpha\beta} \perp p_{\beta\gamma}$ for all $(\alpha, \beta, \gamma)
\in [\mu]^3$, so we could not even find a set $H$ of size 3 as in the above
statement. The obvious problem here is that the sets $\{\alpha, \beta\}$ and
$\{\beta, \gamma\}$ are not aligned, and it turns out that this is the only
obstacle. By requiring the compatibility of $p_a$ and $p_b$ only when
$a$ and $b$ are aligned, we obtain a consistent statement. For example:

\begin{lemma} \label{higher_knaster_lemma}
  Suppose that $\lambda$ is a regular uncountable cardinal, $1 \leq n < \omega$, and
  $\mu = \sigma(\lambda, n)$, and suppose that $\bb{P}$ is the forcing notion
  to add $\chi$-many Cohen reals for some infinite cardinal $\chi$. Then, for every sequence
  $\langle p_a \mid a \in [\mu]^n \rangle$ of conditions in $\bb{P}$, there is
  a set $H \in [\mu]^\lambda$ such that, for all $a,b \in [H]^n$, if
  $a$ and $b$ are aligned, then $p_a \parallel p_b$.
\end{lemma}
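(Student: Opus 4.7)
The plan is to reduce Lemma~\ref{higher_knaster_lemma} directly to Theorem~\ref{general_theorem} via a suitable coloring. For each $a \in [\mu]^n$, view $p_a$ as a finite partial function from $\chi$ to $2$, set $u_a := \dom(p_a) \in [\chi]^{<\omega}$, and record its values in order along $u_a$ as the finite binary sequence $\sigma_a := \langle p_a(u_a(i)) \mid i < \otp(u_a) \rangle$. Fix a bijection between ${}^{<\omega}2$ and $\omega$, and let $g : [\mu]^n \to \omega$ send $a$ to the code of $\sigma_a$.

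Now apply Theorem~\ref{general_theorem} with $\kappa = \aleph_0$ and $\nu = \omega$. The hypothesis that $\lambda$ is ${<}\aleph_0$-inaccessible is automatic, since every infinite cardinal is closed under finite exponentiation, and $\omega < \lambda$ holds because $\lambda$ is uncountable. This produces $H \in [\mu]^\lambda$ and $k < \omega$ such that $g \equiv k$ on $[H]^n$ and $\langle u_a \mid a \in [H]^n \rangle$ is a uniform $n$-dimensional $\Delta$-system, witnessed by some $\rho < \omega$ and roots $\langle \mb{r}_\mb{m} \mid \mb{m} \subseteq n \rangle$.

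It remains to verify that $p_a \parallel p_b$ whenever $a, b \in [H]^n$ are aligned. Let $\mb{m} := \mb{r}(a,b)$. By clause (2) of Definition~\ref{general_def}, $u_a$ and $u_b$ are aligned with $\mb{r}(u_a, u_b) = \mb{r}_\mb{m}$, so $u_a \cap u_b = u_a[\mb{r}_\mb{m}] = u_b[\mb{r}_\mb{m}]$ and, for each $i \in \mb{r}_\mb{m}$, the alignment forces $u_a(i) = u_b(i)$. Since $g(a) = g(b) = k$ we have $\sigma_a = \sigma_b$; in particular $p_a(u_a(i)) = p_b(u_b(i))$ for every $i \in \mb{r}_\mb{m}$. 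Hence $p_a$ and $p_b$ agree on their common domain, so $p_a \parallel p_b$.

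There is no substantive obstacle here: once one isolates the coloring $g$ that records both the order-type of $\dom(p_a)$ and the pattern of values taken by $p_a$ along it, the conclusion is immediate from the uniformities guaranteed by Theorem~\ref{general_theorem}. This parallels the classical deduction of Lemma~\ref{knaster_lemma} from the 1-dimensional $\Delta$-system lemma, with the alignment requirement in Definition~\ref{general_def} playing the role of the automatic alignment of pairs of sets of ordinals in the classical setting.
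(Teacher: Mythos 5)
Your proof is correct and takes essentially the same route as the paper: the paper likewise applies Theorem \ref{general_theorem} to $\langle \dom(p_a) \mid a \in [\mu]^n\rangle$ together with the coloring $a \mapsto \bar{p}_a$, where $\bar{p}_a(i) = p_a(u_a(i))$ is exactly your $\sigma_a$, and then uses alignment of $u_a$ and $u_b$ plus $\bar p_a = \bar p_b$ to see that $p_a$ and $p_b$ agree on $u_a \cap u_b$. The only difference is cosmetic (you explicitly code $\sigma_a$ into $\omega$, the paper treats the finite binary sequences as the color set directly).
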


\begin{proof}
  Fix a sequence $\langle p_a \mid a \in [\mu]^n \rangle$ consisting of conditions
  in $\bb{P}$. For each $a \in [\mu]^n$, let $u_a := \dom(p_a)$ and $k_a := \otp(u_a)$, and let
  $\bar{p}_a : k_a \rightarrow 2$ denote the condition isomorphic to $p_a$, i.e.,
  $\bar{p}_a(i) = p_a(u_a(i))$ for all $i < k_a$. Now apply Theorem
  \ref{general_theorem} to $\langle u_a \mid a \in [\mu]^n \rangle$ and the function
  $a \mapsto \bar{p}_a$ to find an $H \in [\mu]^{\lambda}$, a $k < \omega$, and
  a function $\bar{p} : k \rightarrow 2$ such that
  $\langle u_a \mid a \in [H]^n \rangle$ is a uniform $n$-dimensional
  $\Delta$-system and $\bar{p}_a = \bar{p}$ for all $a \in [H]^n$.

  We claim that $p_a \parallel p_b$ for all aligned $a,b \in [H]^n$. To this end,
  fix $a,b \in [H]^n$ such that $a$ and $b$ are aligned. The only way we could
  have $p_a \perp p_b$ is if there is $\alpha \in u_a \cap u_b$ such that
  $p_a(\alpha) \neq p_b(\alpha)$. Since
  $\langle u_a \mid a \in [H]^n \rangle$ is a uniform $n$-dimensional
  $\Delta$-system, we know that $u_a$ and $u_b$ are aligned. Moreover, we know
  that $\bar{p}_a = \bar{p}_b = \bar{p}$. Therefore, if $\alpha \in
  u_a \cap u_b$, then there is $i < k$ such that $\alpha = u_a(i) =
  u_b(i)$. But then $p_a(\alpha) = \bar{p}(i) = p_b(\alpha)$. Therefore, we have
  $p_a \parallel p_b$.
\end{proof}

\begin{remark}
  If $\lambda$ is weakly compact, then, by Corollary \ref{wkly_compact_cor},
  Lemma \ref{higher_knaster_lemma} still holds with $\mu = \lambda$ rather than
  $\mu = \sigma(\lambda, n)$.
\end{remark}

\section{An application to polarized partition relations} \label{application_section}

In this section, we give a relatively simple application illustrating a typical
use of Theorem \ref{general_theorem} in a forcing argument. The following definition
was introduced by Todor\v{c}evi\'{c}.

\begin{definition}[{\cite[Remark 9.3.3]{todorcevic_book}}]
  Let $1 \leq n < \omega$. Then $\Theta_n$ is the least cardinal $\theta$
  such that, for every function $f : \theta^n \rightarrow \omega$, there is a
  sequence $\langle A_i \mid i < n \rangle$ of infinite subsets of $\theta$
  such that $f \restriction \prod_{i < n} A_i$ is constant.
\end{definition}

We clearly have $\Theta_1 = \aleph_1$. The next proposition establishes lower
bounds for $\Theta_n$ for $n > 1$.

\begin{proposition}
  Suppose that $1 \leq n < \omega$, $\kappa$ is a cardinal, and
  $\Theta_n > \kappa$. Then $\Theta_{n + 1} > \kappa^+$.
\end{proposition}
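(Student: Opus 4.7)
The plan is to carry out the standard stepping-up argument. First, fix $f : \kappa^n \to \omega$ witnessing $\Theta_n > \kappa$, so that $f$ admits no monochromatic $n$-box of infinite subsets of $\kappa$, and for each $\beta < \kappa^+$ fix an injection $e_\beta : \beta \to \kappa$ (possible because $|\beta| \leq \kappa$). I will use these to construct $g : (\kappa^+)^{n+1} \to \omega$ witnessing $\Theta_{n+1} > \kappa^+$.

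The definition of $g$ will use the coordinate of unique maximum as the pivot. For $\vec{\alpha} = (\alpha_0, \ldots, \alpha_n) \in (\kappa^+)^{n+1}$, if the maximum is attained at a unique index $j^*$, I will set $\beta := \alpha_{j^*}$, let $\vec{\alpha}\,' := (\alpha_0, \ldots, \widehat{\alpha_{j^*}}, \ldots, \alpha_n) \in \beta^n$, and define $g(\vec{\alpha})$ to be a natural-number code of the pair $\langle j^*, f(e_\beta(\alpha'_0), \ldots, e_\beta(\alpha'_{n-1})) \rangle$; if the maximum is not attained uniquely, I will set $g(\vec{\alpha})$ to a distinguished sentinel value outside this range. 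Pairing the pivot index $j^*$ with the $f$-value inside $g$ is the crucial move: constancy of $g$ on a product box will then force both the pivot index and the $f$-value to be constant simultaneously.

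To verify that $g$ works, I would suppose for contradiction that $B_0, \ldots, B_n \subseteq \kappa^+$ are infinite and $g$ is constantly $k$ on $\prod_{i \leq n} B_i$. A routine interleaving lets me pass to pairwise disjoint infinite $B_i' \subseteq B_i$; on the resulting product every tuple has distinct coordinates, so the maximum is always unique, ruling out the sentinel value. Hence $k$ decodes to a pair $\langle j^*, k' \rangle$ with a fixed $j^* \leq n$, and every tuple in $\prod B_i'$ has its unique maximum at position $j^*$. This forces $\sup(B_i') < \min(B_{j^*}')$ for every $i \neq j^*$, so upon fixing any $\beta \in B_{j^*}'$ we obtain $B_i' \subseteq \beta$, and the sets $A_i := e_\beta[B_i']$ for $i \neq j^*$ are $n$ infinite subsets of $\kappa$ on which $f$ is constantly $k'$, contradicting the choice of $f$.

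No serious obstacle stands in the way; the only mild nuisance is the disjointness reduction, and the entire content of the argument is the bookkeeping device of coupling $j^*$ with the $f$-value so that a single monochromatic $(n+1)$-box for $g$ produces a monochromatic $n$-box for $f$.
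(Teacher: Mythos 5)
Your proposal is correct and follows essentially the same stepping-up argument as the paper: pin down the coordinate of the unique maximum, collapse the remaining coordinates below it via the injections $e_\beta$, and couple the pivot index with the $f$-value (using a sentinel for degenerate tuples) so that a monochromatic $(n+1)$-box yields a monochromatic $n$-box for $f$. The only cosmetic difference is that the paper triggers its sentinel on repeated coordinates rather than on a non-unique maximum, and it avoids your disjointness refinement by simply noting that some tuple in the box has all distinct coordinates; neither difference affects the argument.
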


\begin{proof}
  Since $\Theta_n > \kappa$, we can fix a function $g : \kappa^n \rightarrow
  \omega$ such that $g$ is not constant on any product of $n$ infinite subsets
  of $\kappa$. For each $\beta < \kappa^+$, fix an injective function
  $e_\beta : \beta \rightarrow \kappa$. Then the function $g_\beta :
  \beta^n \rightarrow \omega$ defined by letting
  \[
    g_\beta(\langle \alpha_0, \ldots, \alpha_{n - 1} \rangle) :=
    g(\langle e_\beta(\alpha_0), \ldots, e_\beta(\alpha_{n - 1}) \rangle)
  \]
  for all $\langle \alpha_0, \ldots, \alpha_{n - 1} \rangle \in \beta^n$
  has the property that $g_\beta$ is not constant on any product of
  $n$ infinite subsets of $\beta$.

  We now define a function $f: (\kappa^+)^{n + 1} \rightarrow (n + 2) \times \omega$
  that will not be constant on any product of $(n + 1)$ infinite subsets of
  $\kappa^+$. This can easily be coded as a function into $\omega$, so this
  suffices to prove the proposition.

  Given $\vec{\alpha} = \langle \alpha_0, \ldots, \alpha_n \rangle \in (\kappa^+)^{n+1}$
  and $i \leq n$, let $\vec{\alpha}^i$ denote the sequence formed by removing
  $\alpha_i$ from $\vec{\alpha}$, i.e., $\vec{\alpha}^i := \langle \alpha_0,
  \ldots, \alpha_{i - 1}, \alpha_{i + 1}, \ldots, \alpha_n \rangle$. Let us
  now define $f(\vec{\alpha})$. If there are $i < j \leq n$ such that
  $\alpha_i = \alpha_j$, then let $f(\vec{\alpha}) := (n + 1, 0)$.
  Otherwise, let $i \leq n$ be such that $\alpha_j < \alpha_i$ for all
  $j \in (n + 1) \setminus \{i\}$, and let $f(\vec{\alpha}) := (i,
  g_{\alpha_i}(\vec{\alpha}^i))$.

  Suppose for sake of contradiction that $\langle A_i \mid i \leq n \rangle$
  is a sequence of infinite subsets of $\kappa^+$ such that
  $f \restriction \prod_{i \leq n} A_i$ is constant, taking value
  $(m, k)$. First note that we can always find a sequence
  $\vec{\alpha} \in \prod_{i \leq n} A_i$ whose coordinates are all distinct,
  so it cannot be the case that $m = n + 1$. Thus, $m \leq n$, so, by our
  definition of $f$, it follows that $A_j < A_m$ for all $j \in (n + 1) \setminus
  \{m\}$. Fix $\beta \in A_m$, and define $\langle A^*_j \mid j < n \rangle$
  by letting $A^*_j := A_j$ for $j < m$ and $A^*_j := A_{j + 1}$ for
  $m \leq j < n$. Then each $A^*_j$ is an infinite subset of $\beta$ and,
  by our definition of $f$, it follows that $g_\beta \restriction
  \prod_{j < n} A^*_j$ is constant, taking value $k$, contradicting our
  assumptions about $g_\beta$.
\end{proof}

In particular, we immediately obtain the following corollary, answering a part of
Question 9.3.4 from \cite{todorcevic_book}.

\begin{corollary} \label{theta_cor_1}
  $\Theta_n \geq \aleph_n$ for all $1 \leq n < \omega$. \qed
\end{corollary}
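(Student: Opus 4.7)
The plan is a straightforward induction on $n$, using the preceding proposition as the inductive step. The base case $n = 1$ is given outright in the paragraph preceding the proposition: $\Theta_1 = \aleph_1$, since for any $f : \omega_1 \to \omega$ the pigeonhole principle yields an uncountable monochromatic set, while the diagonal coloring $f(\alpha) = $ some natural number coding the order type argument shows $\Theta_1$ cannot be $\aleph_0$ (one does not really need this direction for the corollary, but it is well known).

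For the inductive step, I would assume $\Theta_n \geq \aleph_n$ and deduce $\Theta_{n+1} \geq \aleph_{n+1}$. Since $n \geq 1$ we have $\aleph_n = \aleph_{n-1}^+$, so $\aleph_{n-1} < \aleph_n \leq \Theta_n$, i.e., $\Theta_n > \aleph_{n-1}$. Applying the previous proposition with $\kappa = \aleph_{n-1}$ gives $\Theta_{n+1} > \aleph_{n-1}^+ = \aleph_n$, and hence $\Theta_{n+1} \geq \aleph_n^+ = \aleph_{n+1}$, as desired.

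There is no real obstacle here; the content has already been absorbed into the proposition. The only thing to be slightly careful about is that the proposition is phrased as a strict inequality (``$\Theta_n > \kappa$ implies $\Theta_{n+1} > \kappa^+$''), so one must feed it the predecessor $\aleph_{n-1}$ rather than $\aleph_n$ itself, using the fact that each $\aleph_n$ with $n \geq 1$ is a successor cardinal.
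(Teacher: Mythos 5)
Your proof is correct and is exactly the argument the paper intends (the corollary is stated with an immediate \qed, the content having been absorbed into the preceding proposition): induct on $n$, with base case $\Theta_1 = \aleph_1$ and inductive step feeding $\kappa = \aleph_{n-1}$ into the proposition, using that $\aleph_n$ is a successor to convert $\Theta_n \geq \aleph_n$ into the strict inequality $\Theta_n > \aleph_{n-1}$ the proposition requires. The only quibble is in your aside on the base case: the direction of $\Theta_1 = \aleph_1$ actually needed for the corollary is the lower bound $\Theta_1 > \aleph_0$ (witnessed, e.g., by an injection $\omega \to \omega$), not the pigeonhole upper bound, but this is immaterial since the paper asserts the equality outright.
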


It follows easily from the Erd\H{o}s-Rado theorem that $\Theta_n \leq
\beth_{n - 1}^+$ for all $1 \leq n < \omega$. In particular, if
$\mathrm{GCH}$ holds, then $\Theta_n = \aleph_n$ for all $1 \leq n < \omega$.
We now apply Theorem \ref{general_theorem} to prove that adding any number of
Cohen reals preserves the inequality $\Theta_n \leq (\beth_{n-1}^+)^V$.
In fact, we will prove that a slightly stronger partition relation,
which easily implies $\Theta_n \leq (\beth_{n-1}^+)^V$, holds after forcing
to add the Cohen reals.

\begin{theorem} \label{polarized_forcing_theorem}
  Suppose that $1 \leq n < \omega$ and $\chi$ is an infinite cardinal. Let $\mu =
  \beth_{n-1}^+$, and let $\bb{P}$ be the forcing to add $\chi$-many Cohen reals.
  Then the following statement holds in $V^{\bb{P}}$:
  \begin{quote}
    For every function $c:[\mu]^n \rightarrow \omega$, there is a sequence
    $\langle A_m \mid m < n \rangle$ such that
    \begin{itemize}
      \item for all $m < n$, $A_m$ is a subset of $\mu$ of order type $\omega + 1$;
      \item for all $m < m' < n$, we have $A_m < A_{m'}$;
      \item $c \restriction \prod_{m < n} A_m$ is constant.
    \end{itemize}
  \end{quote}
\end{theorem}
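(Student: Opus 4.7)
Fix a $\bb P$-name $\dot c$ and a condition $p_0 \in \bb P$ with $p_0 \Vdash \dot c : [\check\mu]^n \to \omega$. I will show that every $p \le p_0$ has an extension forcing the existence of suitable $A_m$'s, so that $p_0$ itself forces the statement. Fixing such a $p$, use density in $V$ to choose, for every $b \in [\mu]^n$, a condition $p_b \le p$ and an integer $k_b$ with $p_b \Vdash \dot c(b) = k_b$. Let $u_b := \dom(p_b)$ and $\bar p_b \in {}^{\otp(u_b)}2$ be defined by $\bar p_b(i) := p_b(u_b(i))$. Encoding the assignment $b \mapsto (k_b, \bar p_b)$ as a function $g : [\mu]^n \to \omega$ and applying Corollary~\ref{aleph_1_cor} to $\langle u_b \mid b \in [\mu]^n \rangle$ and $g$, I obtain $H \in [\mu]^{\aleph_1}$, a value $k \in \omega$, and a shape $\bar p$ such that $\langle u_b \mid b \in [H]^n \rangle$ is a uniform $n$-dimensional $\Delta$-system and $k_b = k$, $\bar p_b = \bar p$ for every $b \in [H]^n$. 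As in the proof of Lemma~\ref{higher_knaster_lemma}, alignment of any $a,b \in [H]^n$ forces $p_a$ and $p_b$ to agree on $u_a \cap u_b$, hence to be compatible.

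Next, pick in $V$ an increasing tuple $\beta_0 < \cdots < \beta_{n-1}$ in $H$ with $|H \cap \beta_m| = \aleph_1$ for each $m$; these will play the role of the ``top'' elements $\alpha_m^{(\omega)}$ of the $A_m$'s. Set $q_0 := p_{(\beta_0, \ldots, \beta_{n-1})}$, a finite extension of $p$. For each $\ell < \omega$ define the set $D_\ell$ of conditions $q \le q_0$ for which there exists a family $\langle \alpha_m^{(i)} \mid m < n,\, i < \ell\rangle$, strictly increasing in $i$ and with each $\alpha_m^{(i)} \in H \cap \beta_m$, such that $q \le p_b$ for every $b \in \prod_{m<n}\bigl(\{\alpha_m^{(i)} \mid i < \ell\} \cup \{\beta_m\}\bigr)$. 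Once the density of each $D_\ell$ below $q_0$ is established, any generic $G \ni q_0$ meets every $D_\ell$, and reading off the coordinates yields $A_m := \{\alpha_m^{(i)} \mid i < \omega\} \cup \{\beta_m\}$ in $V[G]$ of order type $\omega + 1$ with $A_0 < \cdots < A_{n-1}$. Every $b \in \prod_{m<n} A_m$ will then satisfy $p_b \in G$ by the $D_\ell$ it belongs to, and therefore $c(b) = k$.

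The main obstacle is the density step, which I plan to prove by induction on $\ell$. Given $q \in D_\ell$ witnessed by a current family $\langle \alpha_m^{(i)} \mid i < \ell\rangle$ and an arbitrary $q' \le q$, for each $m$ I pick an $\alpha_m^{(\ell)} \in H \cap (\alpha_m^{(\ell - 1)}, \beta_m)$ chosen so that the ``private'' pieces of the $u_b$'s for the finitely many new tuples $b$ (namely those $b \in \prod_{m<n}(\{\alpha_m^{(i)} \mid i \le \ell\} \cup \{\beta_m\})$ involving at least one $\alpha_m^{(\ell)}$) avoid the finite set $\dom(q') \setminus \dom(q)$. This is possible after a preliminary thinning of $H$ arranging that, among the uncountably many candidate $\alpha_m^{(\ell)}$, only finitely many can contribute a private piece meeting any prescribed finite set; Proposition~\ref{independence_prop} together with the uniformity of the $\Delta$-system governs which pieces of $u_b$ are private versus shared, and the moreover clause of Theorem~\ref{general_theorem} is used to track how the $u_b$'s evolve when coordinates are varied. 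Once the $\alpha_m^{(\ell)}$'s are chosen, pairwise alignment of the new tuples with each other and with the old ones, together with the common shape $\bar p$, guarantees that $q'' := q' \cup \bigcup_{\text{new }b} p_b$ is a finite consistent partial function, hence a condition in $\bb P$ and a witness to $q'' \in D_{\ell+1}$. The book-keeping in this step is the only genuinely delicate point; the rest follows from standard density and genericity considerations.
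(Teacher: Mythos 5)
Your overall strategy is the paper's: decide $\dot c$ pointwise below $p$, apply the higher-dimensional $\Delta$-system lemma to the domains and shapes of the deciding conditions, and grow the $\omega$-parts of the $A_m$'s by a density argument exploiting the fact that, for fixed values of the other coordinates, varying one coordinate yields an ordinary $\Delta$-system of domains, so the private parts can be made to avoid any prescribed finite set. The one step that does not work as written is the extraction at the end: from ``$G$ meets every $D_\ell$'' you cannot simply ``read off the coordinates,'' because the witnessing families for different $\ell$ (and for different elements of $G$ in the same $D_\ell$) need not cohere, and having arbitrarily large finite good matrices does not by itself yield an infinite one (the tree of witnesses is infinitely branching, so K\"{o}nig's lemma does not apply). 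The repair is implicit in your own induction: your density step really shows that, for a fixed $q \in G \cap D_\ell$ with a fixed witness $W$, the set of conditions witnessing $D_{\ell+1}$ via an extension of $W$ is dense, hence predense, below $q$; so the construction must be run as a recursion in $V[G]$, maintaining that the condition $\bigcup\{p_b\}$ accumulated over the current finite matrix lies in $G$ and extending the matrix entry by entry. This is exactly how the paper organizes it (Claims \ref{unbounded_extension_claim} and \ref{matrix_building_claim}, with an anti-lexicographic recursion on $n \times \omega$; the paper chooses the top points generically rather than in advance, but your choice of the $\beta_m$ in $V$ is fine provided you also require each interval $(\beta_{m-1}, \beta_m)$ to meet $H$ in an infinite set and require $\alpha_m^{(i)} \in (\beta_{m-1}, \beta_m)$ --- as written, $D_\ell$ only demands $\alpha_m^{(i)} < \beta_m$, which would break both $A_{m-1} < A_m$ and the alignment of the product tuples).

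Two smaller points. First, adding all $n$ new entries $\alpha_0^{(\ell)}, \ldots, \alpha_{n-1}^{(\ell)}$ in a single step forces you to control tuples containing several new coordinates; adding one entry per step (as the paper does) reduces every instance to the clean one-variable $\Delta$-system $\langle u_{b_0{}^\frown \langle \alpha \rangle ^\frown b_1} \mid \alpha \rangle$ with root $u_b[\mb{r}_{n \setminus \{m\}}]$, and then the claim that only finitely many candidates $\alpha$ can hit a prescribed finite set needs no ``preliminary thinning of $H$'' at all --- it follows from Proposition \ref{independence_prop}(1) and the uniformity already guaranteed by Corollary \ref{aleph_1_cor}; likewise the ``moreover'' clause of Theorem \ref{general_theorem} is not needed in this application. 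Second, the private part of a new $u_b$ must avoid all of $\dom(q')$ outside the relevant root, not merely $\dom(q') \setminus \dom(q)$, since $\dom(q)$ may properly contain $\bigcup_{\text{old } b'} u_{b'}$.
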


\begin{proof}
  We think of conditions in $\bb{P}$ as being finite partial functions from
  $\chi$ to $2$, ordered by reverse inclusion. Given a condition $p \in \bb{P}$,
  let $\bar{p}$ denote the function from $|\dom(p)|$ to $2$ defined by letting
  $\bar{p}(i) := p(\dom(p)(i))$ for all $i < |\dom(p)|$.

  Since the conclusion of the theorem is trivial if
  $n = 1$, we may assume that $n > 1$. Fix a condition $p \in \bb{P}$ and
  a $\bb{P}$-name $\dot{c}$ forced by $p$ to be a function from
  $[\mu]^n$ to $\omega$. For each $b \in [\mu]^n$, find a condition
  $q_b \leq p$ and a color $k_b < \omega$ such that $q_b \Vdash ``\dot{c}(b)
  = k_b"$. Let $u_b := \dom(q_b)$, and define a function $g : [\mu]^n \rightarrow
  {^{<\omega}}2 \times \omega$ by letting $g(b) := \langle \bar{q}_b, k_b \rangle$
  for all $b \in [\mu]^n$. Apply Theorem \ref{general_theorem} to find
  $H \in [\mu]^{\aleph_1}$ such that $\langle u_b \mid b \in [H]^n \rangle$
  is a uniform $n$-dimensional $\Delta$-system and $g \restriction [H]^n$
  is constant, taking value $\langle \bar{q}, k \rangle$. By taking an initial
  segment if necessary, assume that we in fact have $\otp(H) = \omega_1$.
  Note that, if $b$ and $b'$ are aligned elements of $[H]^n$, then
  $q_b$ and $q_{b'}$ are compatible in $\bb{P}$.

  Let $\rho := |\bar{q}|$, and let $\langle \mb{r}_\mb{m} \subseteq \rho \mid
  \mb{m} \subseteq n \rangle$ witness the fact that $\langle u_b \mid
  b \in [H]^n \rangle$ is a uniform $n$-dimensional $\Delta$-system.
  For each $m < n$ and each $a \in [H]^m$, define $u_a$ by letting
  $b$ be any element of $[H]^n$ such that $b[m] = a$ and then letting
  $u_a := u_b[\mb{r}_m]$ (we are thinking of $m$ as an initial subset of $n$ here).
  Then set $q_a := q_b \restriction u_a$. By Proposition \ref{independence_prop}
  and the fact that $\bar{q}_b = \bar{q}$ for all $b \in [H]^n$, it follows that our
  definition of $u_a$ and $q_a$ is independent of our choice of $b$.

  By the arguments of Claim \ref{1d_claim}, we know that, for every $m < n$
  and every $a \in [H]^m$, the sequence $\langle u_{a ^\frown \langle \beta
  \rangle} \mid \beta \in H \setminus (\max(a) + 1) \rangle$ is a 1-dimensional
  $\Delta$-system, with root $u_a$. Since $q_b \leq p$ for all
  $b \in [H]^n$, it follows that $\dom(p) \subseteq u_\emptyset$ and
  $q_\emptyset \leq p$. We will show that $q_\emptyset$ forces the
  existence of a sequence $\langle A_m \mid m < n \rangle$ in $V^\bb{P}$
  such that
  \begin{itemize}
    \item each $A_m$ is a subset of $\mu$ of order type $\omega + 1$;
    \item $A_{m} < A_{m'}$ for all $m < m' < n$;
    \item the realization of $\dot{c}$ is constant when restricted to
    $\prod_{m < n} A_m$, with value $k$.
  \end{itemize}
  Since $p$ was arbitrary, this suffices to prove the theorem. We first need
  the following claim.

  \begin{claim} \label{unbounded_extension_claim}
    Suppose that $m < n$, $a \in [H]^m$, and $\gamma \in H \setminus (\max(a) + 1)$.
    Then the set $D_{a, \gamma} := \{q_{a ^\frown \langle \beta \rangle} \mid \beta \in H
    \setminus \gamma \}$ is predense below $q_a$ in $\bb{P}$.
  \end{claim}

  \begin{proof}
    Fix a condition $r \leq q_a$. We will find an element of $D_{a, \gamma}$
    compatible with $r$. Since $\langle u_{a ^\frown \langle \beta \rangle}
    \mid \beta \in H \setminus \gamma \rangle$ is an infinite 1-dimensional
    $\Delta$-system with root $u_a$, and since $\dom(r)$ is finite, we
    can find $\beta \in H \setminus \gamma$ such that $u_{a ^\frown \langle \beta
    \rangle} \setminus u_a$ is disjoint from $\dom(r)$. But then
    $q_{a ^\frown \langle \beta \rangle} \restriction \dom(r) = q_a$, so,
    since $r \leq q_a$, it follows that $r \cup q_{a ^\frown \langle \beta \rangle}$
    is a condition in $\bb{P}$, so $q_{a ^\frown \langle \beta \rangle}$ is an
    element of $D_{a, \gamma}$ compatible with $r$.
  \end{proof}

  Now suppose that $G$ is $\bb{P}$-generic over $V$ with $q_\emptyset \in G$, and
  let $c$ be the realization of $\dot{c}$ in $V[G]$.
  By applying Claim \ref{unbounded_extension_claim} $n$ times, working in
  $V[G]$, we can recursively choose an increasing sequence $\langle \delta_m \mid
  m < n \rangle$ of elements of $H$ such that, letting $d = \{\delta_m \mid m
  < n \}$, we have
  \begin{itemize}
    \item $q_d \in G$;
    \item $H \cap \delta_0$ is infinite;
    \item for all $m < n - 1$, $H \cap (\delta_{m + 1} \setminus
    (\delta_m + 1))$ is infinite.
  \end{itemize}
  Let $A^*_0$ denote the set of the first $\omega$-many elements of $H \cap \delta_0$ and,
  for all $m < n - 1$, let $A^*_{m + 1}$ denote the set of the first
  $\omega$-many elements of $H \cap (\delta_{m + 1} \setminus
  (\delta_m + 1))$.

  We now construct an $n \times \omega$ matrix $\langle \alpha_{m, \ell} \mid m < n,
  ~ \ell < \omega \rangle$ such that
  \begin{itemize}
    \item for all $m < n$, $\langle \alpha_{m, \ell} \mid \ell < \omega \rangle$
    is an increasing sequence of elements of $A^*_m$;
    \item letting $A_m = \{\alpha_{m, \ell} \mid \ell < \omega\} \cup \{\delta_m\}$
    for each $m < n$, we have $q_b \in G$ for all $b \in \prod_{m < n} A_m$.
  \end{itemize}
  The construction is by recursion on the anti-lexicographical order on
  $n \times \omega$, i.e., we set $(m, \ell) < (m', \ell')$ if $\ell < \ell'$ or
  ($\ell = \ell'$ and $m < m'$). During the construction, at stage $(m, \ell)$,
  for all $m' < n$, we will let $A_{m'} \restriction (m, \ell)$ denote the
  set $\left\{\alpha_{m', \ell'} \mid \ell' \leq \ell \right\} \cup \{\delta_{m'}\}$
  if $m' < m$ and $\left\{\alpha_{m', \ell'} \mid \ell' < \ell \right\} \cup \{\delta_{m'}\}$
  if $m \leq m'$. In other words,
  $A_{m'} \restriction (m, \ell)$ is simply the portion of $A_{m'}$
  that we have specified before stage $(m, \ell)$ of the construction. Our
  recursion hypothesis will be the assumption that, when we reach
  stage $(m, \ell)$, for all $b \in \prod_{m' < n} A_{m'} \restriction (m, \ell)$,
  we have $q_b \in G$. It will then follow that
  $q^*_{m, \ell} := \bigcup \{q_b \mid b \in \prod_{m' < n} A_{m'} \restriction (m, \ell)
  \}$ is also an element of $G$.

  To begin the construction, note that, for all $m' < n$, we have
  $A_{m'} \restriction (0,0) = \{\delta_{m'}\}$, so $q^*_{0,0} = q_d \in G$.
  Thus, our recursion hypothesis is initially satisfied. Now suppose
  that $(m, \ell) \in n \times \omega$ and we have defined
  $\langle \alpha_{m', \ell'} \mid (m', \ell') < (m, \ell) \rangle$ so that
  the resulting condition $q^*_{m, \ell}$ is in $G$. Temporarily move back to $V$,
  noting that each $A_{m'} \restriction (m, \ell)$ is finite and hence in $V$,
  and $A^*_m$ is also in $V$, as it is definable from $H$ (and $\delta_{m - 1}$,
  if $m > 0$).

  Let $B_0 := \prod_{m' < m} (A_{m'} \restriction (m, \ell))$ and
  $B_1 := \prod_{m < m' < n} (A_{m'} \restriction (m, \ell))$.
  If $\ell > 0$, then let $\gamma := \alpha_{m, \ell - 1} + 1$; if $\ell = 0$,
  then let $\gamma := 0$. For each $\alpha \in A^*_m \setminus \gamma$,
  let $q_\alpha^* := \bigcup\{q_{b_0{} ^\frown \langle \alpha\rangle ^\frown
  b_1} \mid b_0 \in B_0, ~ b_1 \in B_1\}$. Notice that, if $b_0, b'_0 \in B_0$
  and $b_1, b'_1 \in B_1$, then $b_0{}^\frown \langle\alpha\rangle ^\frown b_1$ and
  $b'_0{}^\frown \langle\alpha\rangle ^\frown b'_1$ are aligned, and hence
  $q_{b_0{}^\frown \langle\alpha\rangle ^\frown b_1}$ and $q_{b'_0{}^\frown
  \langle\alpha\rangle ^\frown b'_1}$ are compatible. It follows that $q^*_\alpha$ is a
  condition in $\bb{P}$.

  \begin{claim} \label{matrix_building_claim}
    The set $E := \{q^*_\alpha \mid \alpha \in A^*_m \setminus \gamma\}$ is predense
    below $q^*_{m, \ell}$ in $\bb{P}$.
  \end{claim}

  \begin{proof}
    Fix $r \leq q^*_{m, \ell}$. We will find an element of $E$ compatible with
    $r$. Let $\mb{m} = n \setminus \{m\}$. For each $(b_0, b_1) \in B_0 \times B_1$,
    the sequence $\langle u_{b_0{}^\frown \langle\alpha\rangle ^\frown b_1}
    \mid \alpha \in A^*_m
    \setminus \gamma \rangle$ forms a 1-dimensional $\Delta$-system whose root is
    equal to $u_{b_0{}^\frown \langle\alpha\rangle ^\frown b_1}[\mb{r}_\mb{m}]$ for all
    $\alpha \in A^*_m \setminus \gamma$. Since $A^*_m \setminus \gamma$ is
    infinite and since $\dom(r)$, $B_0$, and $B_1$ are all finite, we can find
    $\alpha \in A^*_m \setminus \gamma$ such that, for all $(b_0, b_1) \in B_0 \times
    B_1$, the set $u_{b_0{}^\frown \langle\alpha\rangle ^\frown b_1} \setminus
    (u_{b_0{}^\frown \langle\alpha\rangle ^\frown b_1}[\mb{r}_{\mb{m}}])$ is disjoint
    from $\dom(r)$.

    We claim that $q_\alpha^*$ and $r$ are compatible. To see this, it suffices to
    show that $q_{b_0{}^\frown \langle\alpha\rangle ^\frown b_1}$ and $r$ are
    compatible for every
    $(b_0, b_1) \in B_0 \times B_1$. Thus, fix $(b_0, b_1) \in B_0 \times B_1$.
    We know that $u_{b_0{}^\frown \langle\alpha\rangle ^\frown b_1} \cap
    \dom(r) \subseteq u_{b_0{}^\frown \langle\alpha\rangle ^\frown b_1}[\mb{r}_{\mb{m}}]$.
    Since $b_0{}^\frown \langle\alpha\rangle ^\frown b_1$ and $b_0{}^\frown \langle\delta_m
    \rangle ^\frown b_1$ are aligned with $\mb{r}(b_0{}^\frown \langle\alpha\rangle ^\frown
    b_1, b_0{}^\frown \langle\delta_m \rangle ^\frown b_1) = \mb{m}$, we also
    know that $q_{b_0{}^\frown \langle\alpha\rangle ^\frown b_1} \parallel q_{b_0{}^\frown
    \langle\delta_m\rangle ^\frown b_1}$
    and $u_{b_0{}^\frown \langle\alpha\rangle ^\frown b_1}[\mb{r}_\mb{m}] =
    u_{b_0{}^\frown \langle\delta_m\rangle ^\frown b_1}[\mb{r}_\mb{m}]$.
    Then
    \[
      q_{b_0{}^\frown \langle\alpha\rangle ^\frown b_1} \restriction
      (u_{b_0{}^\frown \langle\alpha\rangle ^\frown b_1}[\mb{r}_{\mb{m}}]) =
      q_{b_0{}^\frown \langle\delta_m\rangle ^\frown b_1} \restriction
      (u_{b_0{}^\frown \langle\delta_m\rangle ^\frown b_1}[\mb{r}_{\mb{m}}]).
    \]
    But we have $q^*_{m, \ell} \leq q_{b_0{}^\frown \langle\delta_m
    \rangle ^\frown b_1}$, since $b_0{}^\frown \langle\delta_m\rangle ^\frown b_1
    \in \prod_{m' < n}A_{m'} \restriction (m, \ell)$.
    It follows that $r \leq q^*_{m, \ell} \leq q_{b_0{}^\frown \langle\alpha\rangle
    ^\frown b_1} \restriction \dom(r)$. Therefore, $r$ and
    $q_{b_0{}^\frown \langle\alpha\rangle ^\frown b_1}$ are compatible.
  \end{proof}

  Returning to $V[G]$, we can find $\alpha \in A^*_m \setminus \gamma$ such that
  $q^*_\alpha \in G$. But notice that, if we were to set $\alpha_{m, \ell} := \alpha$,
  then, letting $(m, \ell)^+$ denote the anti-lexicographic successor of
  $(m, \ell)$, we would have $q^*_{(m, \ell)^+} = q^*_{m, \ell} \cup q^*_\alpha
  \in G$. We can therefore set $\alpha_{m, \ell} := \alpha$ while maintaining
  the recursion hypothesis, and continue to the next step of the construction.

  At the end of the construction, we have built sets $\langle A_m \mid m < n \rangle$
  such that
  \begin{itemize}
    \item for each $m < n$, $A_m$ is a subset of $H$ and $\otp(A_m) = \omega + 1$;
    \item for each $m < m' < n$, $A_m < A_{m'}$;
    \item for each $b \in \prod_{m < n} A_m$, we have $q_b \in G$, and hence
    $c(b) = k$.
  \end{itemize}
  Therefore, $\langle A_m \mid m < n \rangle$ witnesses this instance of the theorem.
\end{proof}

\begin{remark}
  With some appropriate bookkeeping, the order type $\omega + 1$ in the statement
  of Theorem \ref{polarized_forcing_theorem} can be replaced by any countable
  ordinal $\alpha$.
\end{remark}

\begin{corollary}
  The statement "$\forall n \in [1, \omega) ~ (\Theta_n = \aleph_n)$" is compatible with an arbitrarily 
  large value of the continuum. In particular, if $V$ is a model of $\mathrm{GCH}$, $\chi$ is an 
  infinite cardinal, and $\bb{P}$ is the forcing to add $\chi$-many Cohen reals, then, in $V^{\bb{P}}$, 
  $\Theta_n = \aleph_n$ for all $1 \leq n < \omega$.
\end{corollary}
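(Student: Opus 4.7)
The plan is to combine Corollary~\ref{theta_cor_1}, which gives the lower bound $\Theta_n \geq \aleph_n$ as a ZFC theorem and so passes to $V^\bb{P}$, with Theorem~\ref{polarized_forcing_theorem}, which will supply the matching upper bound $\Theta_n \leq \aleph_n$ in $V^\bb{P}$ once one bridges the two formats of the polarized partition relation. The initial clause about the continuum is automatic: since $\chi$ is arbitrary, choosing $\chi$ large makes $2^{\aleph_0}$ arbitrarily large in the extension.

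First I would verify that the cardinal $\mu = \beth_{n-1}^+$ appearing in Theorem~\ref{polarized_forcing_theorem} is in fact $\aleph_n$. Under GCH in $V$, $\beth_{n-1}^V = \aleph_{n-1}^V$, and since $\bb{P}$ is ccc it preserves all cardinals, so $\aleph_n^V = \aleph_n^{V^\bb{P}}$; hence in $V^\bb{P}$ the conclusion of Theorem~\ref{polarized_forcing_theorem} applies at $\aleph_n$.

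Next, to deduce the upper bound for $\Theta_n$, fix (in $V^\bb{P}$) a function $f : \aleph_n^n \to \omega$. I would define $c : [\aleph_n]^n \to \omega$ by
\[
c(\{\alpha_0, \alpha_1, \ldots, \alpha_{n-1}\}) := f(\alpha_0, \alpha_1, \ldots, \alpha_{n-1})
\]
whenever $\alpha_0 < \alpha_1 < \cdots < \alpha_{n-1}$. Applying Theorem~\ref{polarized_forcing_theorem} to $c$ produces sets $A_0 < A_1 < \cdots < A_{n-1}$, each of order type $\omega + 1$, such that $c \restriction \prod_{m < n} A_m$ is constant. Because the $A_m$ are strictly separated, every $n$-tuple in $\prod_{m<n} A_m$ is automatically strictly increasing, so $f \restriction \prod_{m<n} A_m$ equals $c$ on this product and is therefore also constant. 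Each $A_m$ is infinite, so this witnesses $\Theta_n \leq \aleph_n$ in $V^\bb{P}$, and combining with Corollary~\ref{theta_cor_1} finishes the proof.

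There is essentially no obstacle here: the only substantive point is the reconciliation of Theorem~\ref{polarized_forcing_theorem}, which speaks of functions on $[\mu]^n$, with the definition of $\Theta_n$, which uses $f:\theta^n \to \omega$ on arbitrary $n$-tuples. That reconciliation is clean precisely because the theorem hands back sets with the strict separation $A_0 < \cdots < A_{n-1}$, so the ambiguity of ordering in an arbitrary $n$-tuple does not arise in $\prod_{m<n} A_m$.
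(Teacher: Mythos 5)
Your proof is correct and follows essentially the same route as the paper: compute $\beth_{n-1}^+ = \aleph_n$ from GCH, apply Theorem \ref{polarized_forcing_theorem} for the upper bound, and cite Corollary \ref{theta_cor_1} for the lower bound. The paper leaves the translation between $c:[\mu]^n\rightarrow\omega$ and $f:\mu^n\rightarrow\omega$ implicit (it notes just before the theorem that the partition relation ``easily implies'' the bound on $\Theta_n$), whereas you spell it out via the separation $A_0 < \cdots < A_{n-1}$; this is a welcome but inessential elaboration.
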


\begin{proof}
  In $V$, since $\mathrm{GCH}$ holds, we have $\beth_{n-1}^+ = \aleph_n$ for all $1 \leq n < \omega$. 
  Therefore, Theorem \ref{polarized_forcing_theorem} implies that $\Theta_n \leq \aleph_n$ in 
  $V^{\bb{P}}$ for all $1 \leq n < \omega$. By Corollary \ref{theta_cor_1}, it follows that 
  $\Theta_n = \aleph_n$ for all $1 \leq n < \omega$ in $V^{\bb{P}}$.
\end{proof}

\section{A variation, and monochromatic sumsets of reals} \label{sumset_section}

In this section, we discuss an alternative form of higher-dimensional $\Delta$-system
that has appeared in the literature. The following theorem is due to Shelah and
follows from the proof of \cite[Lemma 4.1]{shelah_sierpinski_ii} (cf.\ also
\cite[Claim 7.2.a]{dzamonja_larson_mitchell} and \cite[Lemma 3.6]{zhang_halpern_lauchli}
for more complete proofs of similar statements).

\begin{theorem} \label{variation_lemma}
  Suppose that $\nu \leq \lambda \leq \mu$ are infinite cardinals, $1 \leq n < \omega$, and
  $\mu \rightarrow (\lambda)^{2n}_{2^\nu}$. Suppose moreover that
  $\langle u_a \mid a \in [\mu]^n \rangle$ is a sequence of elements from
  $[\mathrm{On}]^{\leq \nu}$. Then there is $H \in [\mu]^\lambda$ and a
  sequence $\langle u^*_a \mid a \in [H]^{\leq n} \rangle$ of elements from
  $[\mathrm{On}]^{\leq \nu}$ such that
  \begin{enumerate}
    \item $u^*_a \supseteq u_a$ for all $a \in [H]^n$;
    \item for all $a,b \in [H]^n$, we have $\tp(u^*_a, u_a) = \tp(u^*_b, u_b)$;
    \item for all $a,b \in [H]^{\leq n}$, we have $u^*_a \cap u^*_b = u^*_{a \cap b}$;
    \item for all $a_0 \subseteq a_1$ and $b_0 \subseteq b_1$, where
    $a_1, b_1 \in [H]^{\leq n}$, if $\tp(a_1, a_0) = \tp(b_1, b_0)$, then
    $\tp(u^*_{a_1}, u^*_{a_0}) = \tp(u^*_{b_1}, u^*_{b_0})$.
  \end{enumerate}
\end{theorem}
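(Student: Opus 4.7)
The plan is to apply the partition relation $\mu \rightarrow (\lambda)^{2n}_{2^\nu}$ to a coloring that captures the joint isomorphism type of the sets $u_a$ for $a$ ranging over $n$-subsets of a $2n$-tuple, and then to define each $u^*_a$ as a canonical pair-intersection of the $u_b$'s.

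For each $c \in [\mu]^{2n}$, set $V_c := \bigcup_{a \in [c]^n} u_a$, a set of at most $\binom{2n}{n} \cdot \nu = \nu$ ordinals, and let $\pi_c : c \cong 2n$ be the order-isomorphism. Define $f : [\mu]^{2n} \rightarrow 2^\nu$ by letting $f(c)$ code the function $\phi_c : \otp(V_c) \rightarrow \mathcal{P}([2n]^n)$ given by $\phi_c(i) := \{\pi_c[a] \mid a \in [c]^n,\ V_c(i) \in u_a\}$. The range of $f$ has cardinality at most $\sum_{\delta \leq \nu} 2^{\delta \cdot \binom{2n}{n}} = 2^\nu$, and the partition relation yields $H \in [\mu]^\lambda$ on which $f$ is constant, with common value $\phi$.

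For $a \in [H]^{\leq n}$, pick any $b_1, b_2 \in [H]^n$ with $b_1 \cap b_2 = a$ and set $u^*_a := u_{b_1} \cap u_{b_2}$; for $|a| = n$ this forces $b_1 = b_2 = a$ and yields $u^*_a = u_a$, so condition (1) is immediate, and in every case $u^*_a \subseteq u_{b_1}$ has cardinality at most $\nu$. The crucial first step is verifying that this definition is independent of the choice of pair $(b_1, b_2)$: embedding two such pairs into $2n$-subsets of $H$ and applying monochromaticity to auxiliary ``comparison'' pairs of the form $(b_1, b_1')$, $(b_2, b_1')$, etc., one argues, in the same spirit as the classical proof that a pair-monochromatic family of sets of ordinals must be a $\Delta$-system, that the ordinals occupying the ``shared positions'' dictated by $\phi$ are literally constant across all valid choices of witnesses.

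Given well-definedness, the remaining conditions follow. Condition (2) is trivial since $u^*_a = u_a$ for $|a| = n$. Condition (4) follows from the observation that if $\tp(a_1, a_0) = \tp(b_1, b_0)$, then, via extensions to elements of $[H]^{2n}$ and the constancy of $f$, the diagrams $(u^*_{a_1}, u^*_{a_0})$ and $(u^*_{b_1}, u^*_{b_0})$ have the same type. For condition (3), given $a, b \in [H]^{\leq n}$, choose $n$-element extensions $a_1 \supseteq a$ and $b_1 \supseteq b$ inside $H$ so that $a_1 \setminus a$, $b_1 \setminus b$, and $a \cup b$ are pairwise disjoint; then $a_1 \cap b_1 = a \cap b$, giving $u^*_{a \cap b} = u_{a_1} \cap u_{b_1}$, while the monotonicity $u^*_c \subseteq u^*_{c'}$ whenever $c \subseteq c'$ (a direct consequence of well-definedness applied to compatible witnesses) combined with $u^*_a \subseteq u_{a_1}$ and $u^*_b \subseteq u_{b_1}$ yields both inclusions needed to conclude $u^*_a \cap u^*_b = u^*_{a \cap b}$.

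The main obstacle is the well-definedness step: monochromaticity of $f$ gives only an abstract isomorphism of the relevant configurations, while literal equality of pair-intersections demands chaining comparisons through carefully chosen intermediate pairs in $[H]^{2n}$, analogous to (and generalizing) the argument that upgrades ``all pairs share the same abstract type'' to ``the family forms an honest $\Delta$-system with a fixed root'' in the one-dimensional case. Once this upgrade is established the verification of conditions (1)--(4) is largely bookkeeping.
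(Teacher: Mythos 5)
The paper itself offers no proof of this theorem (it is quoted from Shelah, with fuller proofs in the cited works of D\v{z}amonja--Larson--Mitchell and Zhang), so the comparison here is with those sources; but in any case your construction cannot work, and the obstruction is exhibited in the paper itself at the start of Section \ref{def_section}. Your definition $u^*_a := u_{b_1} \cap u_{b_2}$ for any $b_1, b_2 \in [H]^n$ with $b_1 \cap b_2 = a$ forces $u^*_a = u_a$ when $|a| = n$, and its well-definedness at lower levels is precisely the assertion that $u_{b_1} \cap u_{b_2}$ depends only on $b_1 \cap b_2$ --- the ``overly restrictive requirement'' the paper refutes. Take $n = 2$ and $u_{\alpha\beta} := \{\alpha, \beta+1\}$: for any $\alpha < \beta < \gamma < \delta$ in $H$ one has $u_{\beta\gamma} \cap u_{\beta\delta} = \{\beta\}$ but $u_{\alpha\beta} \cap u_{\beta\gamma} = \emptyset$, so $u^*_{\{\beta\}}$ is not well defined, and condition (3) applied to the top level already fails outright. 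Crucially, no chain of comparisons through intermediate tuples can repair this: your coloring $f$ is literally constant on the class of limit ordinals for this family (every increasing $4$-tuple of limits produces the same $\phi_c$), so constancy of $f$ provably does not imply the equalities you need. The analogy with the one-dimensional $\Delta$-system argument breaks down exactly here.

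The missing idea is that for $n > 1$ the top-level sets must in general be \emph{enlarged}: this is why clause (1) reads $u^*_a \supseteq u_a$ rather than $u^*_a = u_a$, and why clause (2) records the type of the pair $(u^*_a, u_a)$ --- both are vacuous under your definition, which should have been a warning sign. In the example above one must take something like $u^*_{\alpha\beta} = \{\alpha, \alpha+1, \beta, \beta+1\}$, adding ``virtual'' elements so that pairwise intersections become governed by the index intersections; the actual proofs manufacture these extra elements by analyzing, position by position, which coordinates of $b$ each element of $u_b$ ``depends on'' (as read off from the homogeneous type on $[H]^{2n}$) and then padding so that the resulting types match. Your verifications of (2)--(4) are conditional on a well-definedness claim that is false for your $u^*$, so the proof does not go through.
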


It is currently unclear whether arguments similar to those in the proof of
Theorem \ref{general_theorem} can be used to obtain the conclusion of
Theorem \ref{variation_lemma} from a weaker assumption on $\mu$, such as
$\mu \geq \sigma(\lambda, n)$. It is the case, however, that certain results
that have been proven using Theorem \ref{variation_lemma} can be proven by
instead using Theorem \ref{general_theorem}. This can yield some improvements,
since Theorem \ref{general_theorem} places weaker assumptions on
the cardinal $\mu$. We give one example of such a result here.

In \cite{zhang_monochromatic}, Zhang uses Theorem \ref{variation_lemma} to
prove that, in the forcing extension obtained by adding $\beth_\omega$-many
Cohen reals, we have $\bb{R} \rightarrow^+ (\aleph_0)_r$ for every $r < \omega$, i.e.,
for every $r < \omega$ and every function $f:\bb{R} \rightarrow r$, there is an
infinite set $X \subseteq \bb{R}$ such that $f \restriction (X + X)$ is constant.
We remark that, by a result of Hindman, Leader, and Strauss \cite{hindman_leader_strauss},
if $2^{\aleph_0} < \aleph_\omega$, then there is $r < \omega$ such that
$\bb{R} \not\rightarrow^+ (\aleph_0)_r$, so, over a model of $\mathrm{GCH}$,
it is necessary to add at least $\beth_\omega$-many reals to obtain
$\bb{R} \rightarrow^+ (\aleph_0)_r$ for every $r < \omega$.

Let us examine, though, the number of reals that must be added to obtain
$\bb{R} \rightarrow^+ (\aleph_0)_r$ for some fixed $r < \omega$.
Zhang in fact proves that $\bb{R} \rightarrow^+ (\aleph_0)_2$ holds in $\mathrm{ZFC}$
and, for a fixed $r > 2$, in proving that $\bb{R} \rightarrow^+ (\aleph_0)_r$
holds in the forcing extension, Theorem \ref{variation_lemma} is employed
with $\nu = \aleph_0$, $\lambda = \aleph_1$, and $n = 2r$. Hence, $\mu$ can
be taken to be least such that $\mu \rightarrow (\aleph_1)_{2^{\aleph_0}}^{4r}$.
By the Erd\H{o}s-Rado theorem, then, we can take $\mu = \beth_{4r}^+$.
Zhang's proof uses the fact that we have added at least $\mu$-many Cohen reals
and therefore shows that, for this fixed value of $r > 2$, the statement
$\bb{R} \rightarrow^+ (\aleph_0)_r$ holds in the forcing extension obtained by
adding $\beth_{4r}^+$-many Cohen reals.

Inspection of Zhang's proof reveals that Theorem \ref{general_theorem}, with
$\kappa = \aleph_1$, $\lambda = \beth_1^+$, and $n = 2r$, can be used in place of
Theorem \ref{variation_lemma}. We can therefore take $\mu = \sigma(\beth_1^+, 2r) =
\beth_{2r}^+$, obtaining the following corollary:

\begin{corollary} \label{additive_cor}
  Suppose that $2 < r < \omega$ and $\bb{P}$ is the forcing to add at least
  $\beth_{2r}^+$-many Cohen reals. Then, in $V^{\bb{P}}$, we have
  $\bb{R} \rightarrow^+ (\aleph_0)_r$.
\end{corollary}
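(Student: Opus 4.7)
The plan is to follow Zhang's proof of $\bb{R} \rightarrow^+ (\aleph_0)_r$ from \cite{zhang_monochromatic} essentially unchanged, substituting Theorem \ref{general_theorem} for the single appeal to Theorem \ref{variation_lemma} at the key uniformization step. Fix a condition $p \in \bb{P}$ and a $\bb{P}$-name $\dot{f}$ forced by $p$ to be a function $\bb{R} \rightarrow r$; it suffices to find $q \leq p$ forcing the existence of an infinite $X \subseteq \bb{R}$ with $\dot{f} \restriction (X + X)$ constant. Enumerate $\mu := \beth_{2r}^+$-many of the Cohen reals added by $\bb{P}$ as $\langle \dot{x}_\alpha \mid \alpha < \mu \rangle$. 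For each $b \in [\mu]^{2r}$, strengthen $p$ to a condition $q_b$ that decides the finite packet of values of $\dot{f}$ on the sums of pairs of coordinates indexed by $b$ that Zhang's argument uses, set $u_b := \dom(q_b) \in [\mathrm{On}]^{<\aleph_1}$, and let $g(b)$ encode this finite packet of forced decisions.

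Now apply Theorem \ref{general_theorem} with $n = 2r$, $\kappa = \aleph_1$, $\lambda = \beth_1^+$, and $\nu$ any upper bound on the range of $g$ with $\nu < \lambda$. By Remark \ref{sigma_remark}(1) we have $\sigma(\beth_1^+, 2r) = \beth_{2r}^+ = \mu$, so the hypotheses are met and we obtain $H \in [\mu]^{\beth_1^+}$ such that $g \restriction [H]^{2r}$ is constant and $\langle u_b \mid b \in [H]^{2r} \rangle$ is a uniform $2r$-dimensional $\Delta$-system satisfying the ``moreover'' clause. In particular, any two aligned $b_0, b_1 \in [H]^{2r}$ give compatible conditions $q_{b_0}, q_{b_1}$ whose forced decisions agree on the overlap, and the ``moreover'' clause pins down the intersection types for certain non-aligned pairs obtained by replacing a single coordinate---precisely the uniformity that Zhang extracts from Theorem \ref{variation_lemma}.

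Finally, carry out Zhang's recursive construction of an infinite monochromatic $X \subseteq \bb{R}$ in a generic extension below a suitable common strengthening of the $q_b$'s, using $H$ in place of the set Theorem \ref{variation_lemma} would supply in his proof. Since $\beth_1^+ \geq \aleph_1$, the set $H$ remains large enough for the $\omega$-length recursion producing $X$, and the packet of forced decisions carried by each $q_b$ ensures that $\dot{f} \restriction (X + X)$ is forced to be constant. The main obstacle is bookkeeping: every appeal in Zhang's proof to clause (3) or (4) of Theorem \ref{variation_lemma} must be matched with the appropriate clause of Definition \ref{general_def}, Proposition \ref{independence_prop}(1), or the ``moreover'' clause of Theorem \ref{general_theorem}, and one must check that each intersection type Zhang reads off from his uniformization is either exactly one of the $\mb{r}_\mb{m}$'s or is determined from them via the single-coordinate replacement controlled by the ``moreover'' clause. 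No new combinatorial ingredient is required; the improvement in $\mu$ from $\beth_{4r}^+$ down to $\beth_{2r}^+$ comes entirely from Theorem \ref{general_theorem}'s sharper quantification of the cardinal resource needed to find a uniform higher-dimensional $\Delta$-system.
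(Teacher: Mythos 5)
Your proposal matches the paper's treatment: the paper likewise establishes this corollary only by observing that Theorem \ref{general_theorem}, applied with $\kappa = \aleph_1$, $\lambda = \beth_1^+$, and $n = 2r$ (so that $\mu = \sigma(\beth_1^+, 2r) = \beth_{2r}^+$ by Remark \ref{sigma_remark}(1)), can be substituted for Theorem \ref{variation_lemma} in Zhang's argument, and it defers the detailed bookkeeping to an external note rather than reproducing it. One caveat about your reconstruction of the setup: a single finite Cohen condition $q_b$ cannot decide values of $\dot{f}$ at sums of Cohen reals, and the sets $u_b$ arising in Zhang's argument are countably infinite rather than finite domains of conditions --- this is precisely why the parameters are $\kappa = \aleph_1$ and $\lambda = \beth_1^+$ rather than $\kappa = \aleph_0$ and $\lambda = \aleph_1$ --- but this detail lives inside Zhang's proof, which both you and the paper treat as a black box.
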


This is an improvement on the bound of $\beth_{4r}^+$ given by Zhang's proof,
though of course it does not improve on Zhang's bound for obtaining
$\bb{R} \rightarrow^+ (\aleph_0)_r$ simultaneously for all $r < \omega$.
We omit the adaptation of Zhang's proof using Theorem
\ref{general_theorem} instead of Theorem \ref{variation_lemma} here, as it would
entail introducing a considerable number of definitions and only involves
very minor changes to Zhang's proof. Instead, we direct the reader to
\cite{zhang_monochromatic} and \cite{sumset_proof_alteration}, in which Zhang's
original proof and the adaptation using Theorem \ref{general_theorem} are
spelled out in detail.

\medskip

\noindent \small{\textbf{Data Availability:} Data sharing not applicable to this article as no datasets
were generated or analysed during the current study.}

\bibliographystyle{plain}
\bibliography{bib}

\end{document}